\numberwithin{equation}{section}
\newtheorem{thm}{Theorem}[section]
\newtheorem{lemma}[thm]{Lemma}
\newtheorem{prop}[thm]{Proposition}
\newtheorem{cor}[thm]{Corollary}
{\theorembodyfont{\rmfamily}
\newtheorem{defn}[thm]{Definition}
\newtheorem{eg}[thm]{Example}

\newtheorem{rmk}[thm]{Remark}
}
\newcommand{\qed}{\hfill \mbox{\raggedright \rule{.07in}{.1in}}}
\newenvironment{proof}{\vspace{1ex}\noindent{\bf
Proof}\hspace{0.5em}}{\hfill\qed\vspace{1ex}}
\newenvironment{pfof}[1]{\vspace{1ex}\noindent{\bf Proof of
#1}\hspace{0.5em}}{\hfill\qed\vspace{1ex}}
\newcommand{\R}{{\mathbb R}}
\newcommand{\C}{{\mathbb C}}
\newcommand{\Z}{{\mathbb Z}}
\newcommand{\D}{{\mathbb D}}
\newcommand{\T}{{\mathbb T}}
\newcommand{\N}{{\mathbb N}}
\newcommand{\cB}{{\mathcal{B}}}
\newcommand{\cX}{{\mathcal{X}}}
 \newcommand{\Fix}{\operatorname{Fix}}
 \newcommand{\eps}{{\epsilon}}
 \newcommand{\spec}{\operatorname{spec}}
 \newcommand{\diam}{\operatorname{diam}}
 \newcommand{\supp}{\operatorname{supp}}
 \newcommand{\dist}{\operatorname{dist}}
\newcommand{\SMALL}{\textstyle}
\newcommand{\BIG}{\displaystyle}
\newcommand{\vertiii}[1]{{\left\vert\kern-0.25ex\left\vert\kern-0.25ex\left\vert #1 
    \right\vert\kern-0.25ex\right\vert\kern-0.25ex\right\vert}}
\title{Mixing Properties for Toral Extensions \\ 
of Slowly Mixing Dynamical Systems \\ with Finite and Infinite Measure} 
\author{
Ian Melbourne \thanks{Mathematics Institute, University of Warwick, Coventry, CV4 7AL, UK}
\and 
Dalia Terhesiu
\thanks{Faculty of Mathematics, University of Vienna,
Oskar Morgensternplatz 1, 1090 Vienna, AUSTRIA.
Current address: Mathematics Department, University of Exeter, EX4 4QF, UK}
}
\date{30 November 2015.  Updated 25 October 2018.}
\begin{document}

\maketitle

\begin{abstract}
We prove results on mixing and mixing rates for toral extensions of nonuniformly expanding maps with subexponential decay of correlations.  Both the finite and infinite measure settings are considered.  Under a Dolgopyat-type condition on nonexistence of approximate eigenfunctions, we prove that existing results for (possibly nonMarkovian) nonuniformly expanding maps hold also for their toral extensions.
 \end{abstract}

 \section{Introduction} 
 \label{sec-intro}

In a landmark paper, Dolgopyat~\cite{Dolgopyat02} obtained results on superpolynomial decay of correlations for compact group extensions of uniformly expanding and uniformly hyperbolic dynamical systems.  
An interesting question is to extend this result to nonuniformly expanding/hyperbolic systems, including systems that are slowly mixing or preserving an infinite measure.

In this paper, we focus on the case when the group is abelian, and consider 
toral extensions of a large class of (not necessarily Markov) nonuniformly expanding maps, including the AFN maps of~\cite{Zweimuller98,Zweimuller00}, in both the finite and infinite measure settings.
Under mild hypotheses, we show that sharp mixing results for the underlying map pass over to the toral extension.

Future projects could include group extensions of nonuniformly hyperbolic systems including the case of general compact groups.  
Passing from nonuniformly expanding to nonuniformly hyperbolic should be straightforward for systems with exponentially contracting stable directions, but this is a somewhat restrictive assumption.  For recent substantial progress on the analogous question for nonuniformly hyperbolic flows and comparison with the group extension situation, see~\cite{BBMsub} and~\cite[Section~9]{rapid} respectively.

The analysis of compact group extensions divides into the cases where the group is abelian (a torus) or semisimple.  As seen in~\cite{Dolgopyat02} (see also~\cite{FieldParry99}), it turns out that the toral case raises more technical difficulties, though the semisimple case is more complicated in terms of notation and prerequisites from representation theory.
In this paper, we have chosen to focus on the technically harder toral case; we do not anticipate any major difficulties in dealing with general compact groups but have not investigated this further.

\subsection{Existing results for nonuniformly hyperbolic maps}

Let $(X,d)$ be a metric space with Borel measure $\mu$,
and let $f:X\to X$ be an ergodic and topologically mixing measure-preserving transformation.
Let $Y\subset X$ be a subset with $\mu(Y)\in(0,\infty)$.
We define the first return time
$\tau:Y\to\Z^+$ and first return map $F=f^\tau:Y\to Y$ given by
\[
	\tau(y)=\inf\{n\ge1:f^ny\in Y\} \quad\text{and}\quad
F(y)=f^{\tau(y)}(y).
\]
Under certain assumptions on $F$ and $\tau$,
it is possible to obtain sharp mixing properties for $f$.
More specifically, we assume that
\begin{itemize}
\item[(i)] The first return time $\tau:Y\to\Z^+$ is either nonintegrable
with $\mu(y\in Y:\tau(y)>n)=\ell(n)n^{-\beta}$ where $\beta\in(0,1]$ and
$\ell$ is a slowly varying function\footnote{A measurable function $\ell:(0,\infty)\to(0,\infty)$ is {\em slowly varying} if $\lim_{x\to\infty}\ell(\lambda x)/\ell(x)=1$ for all $\lambda>0$.},
or integrable with $\mu(y\in Y:\tau(y)>n)=O(n^{-\beta})$ where $\beta>1$.
\item[(ii)] The first return map $F:Y\to Y$ fits into the appropriate functional abstract framework with suitable Banach space of observables $\cB(Y)\subset L^1(Y)$ with norm $\|\;\|$
(see~\cite{Gouezel04a,Sarig02} for the finite measure case, and~\cite{MT12} for the infinite measure case).
\end{itemize}

Under conditions (i) and (ii), we recall the following results from~\cite{Gouezel04a,Sarig02} and~\cite{MT12} for the map $f:X\to X$
and observables $v_0$, $w_0$ supported in $Y$ with $v_0\in\cB(Y)$, $w_0\in L^\infty(Y)$.  
Let $\bar v_0=\int_Y v_0\,d\mu$, $\bar w_0=\int_Y w_0\,d\mu$.

In the infinite measure case, define
\begin{align} \label{eq-ell}
\tilde\ell(n)=\begin{cases} \ell(n), & \beta\in(0,1) \\
\sum_{j=1}^n\ell(j)j^{-1}, & \beta=1 \end{cases},
	\quad\text{and}\quad
d_\beta=\begin{cases} \frac{1}{\pi}\sin\beta\pi, & \beta\in(0,1) \\
1, & \beta=1\end{cases}.
\end{align}
If $\beta\in(\frac12,1]$, then
\begin{align} \label{eq-inf1}
	\lim_{n\to\infty}\tilde\ell(n)n^{1-\beta}\int_Y v_0\,w_0\circ f^n\,d\mu= d_\beta \bar v_0\bar w_0.
\end{align}
If $\beta<\frac12$, or if $\beta<1$ and either 
$\bar v_0=0$ or $\bar w_0=0$, then
\begin{align} \label{eq-inf2}
\int_Y v_0\,w_0\circ f^n\,d\mu=O(\ell(n)n^{-\beta}\|v_0\||w_0|_\infty).
\end{align}

In the finite measure case, we normalise so that $\mu$ is a probability measure on $X$.
For all $n\ge1$, 
\begin{align} \label{eq-fin}
\int_Y v_0\,w_0\circ f^n\,d\mu
- \bar v_0\bar w_0
= \sum_{j>n}\mu(\tau>j) \bar v_0\bar w_0 +E_\beta(n)\|v_0\||w_0|_\infty,
\end{align}
where $E_\beta(n)=O(n^{-\beta})$ for $\beta>2$, 
$E_\beta(n)=O(n^{-2}\log n)$ for $\beta=2$, 
and $E_\beta(n)=O(n^{-(2\beta-2)})$ for $1<\beta<2$.
Also $E_\beta(n)=O(n^{-\beta})$ for all $\beta>1$ if 
 $\bar v_0=0$ or $\bar w_0=0$.

 \begin{rmk}  The precise functional analytic hypotheses mentioned in condition~(ii) play no role in this paper; we use only the 
consequences~\eqref{eq-inf1}--\eqref{eq-fin} for $\int_Y v_0\,w_0\circ f^n\,d\mu$.
	A special case is when $F$ is a full branch Gibbs-Markov map 
	with $\cB(Y)$ taken to be a space $F_\theta(Y)$ of Lipschitz observables  (see Subsection~\ref{sec-toral} and Section~\ref{sec-GM} for definitions).  For nonMarkov examples, see Subsection~\ref{sec-eg}.
\end{rmk}

\subsection{Toral extensions}
\label{sec-toral}

\paragraph{Set up}
In this paper, we prove analogous results for toral extensions of nonuniformly expanding maps $f:X\to X$ satisfying conditions~(i) and~(ii).
We assume further that there exists $Z\subset Y\subset X$ (possibly $Z=Y$) with $\mu(Z)>0$ and a return time\footnote{
A function $\varphi:Z\to\Z^+$ is called a {\em return time} if 
$f^{\varphi(z)}z\in Z$ for all $z\in Z$.}
 $\varphi:Z\to \Z^+$ (not necessarily a first return time).  Define the return map $G=f^\varphi:Z\to Z$, $G(z)=f^{\varphi(z)}z$.  We assume:
\begin{itemize}
\item[(iii)]  
there is a measure $\mu_Z$ on $Z$ equivalent to $\mu|_Z$ and
an at most countable measurable partition $\alpha$ of $Z$ such that
$\varphi$ is constant on partition elements and
$\gcd\{\varphi(a):a\in\alpha\}=1$.
Moreover, there are constants $\lambda>1$, $\eta\in(0,1]$, $C_1\ge1$, such that for each $a\in\alpha$,
\begin{itemize}
\item[(1)] $G:a\to Z$ is a measure-theoretic bijection.
\item[(2)] $d(Gz,Gz')\ge \lambda d(z,z')$ for all $z,z'\in a$.
\item[(3)] $g=\log \frac{d\mu_Z}{d\mu_Z\circ G}$
satisfies $|g(z)-g(z')|\le C_1d(Gz,Gz')^\eta$ for all
\mbox{$z,z'\in a$}.
\item[(4)] $d(f^\ell z,f^\ell z')\le C_1d(Gz,Gz')$ for all $z,z'\in a$,
$0\le \ell <\varphi(a)$.
\end{itemize}
\end{itemize}

In particular, conditions (1)--(3) mean that 
$G:Z\to Z$ is a full branch Gibbs-Markov map with partition $\alpha$.  
Such maps are discussed further in Section~\ref{sec-GM}.

\begin{itemize}
	\item[(iv)] 
There exists $\rho:Z\to \Z^+$ constant on elements of the partition $\alpha$ such that $G(z)=F^{\rho(z)}z$ for $z\in Z$.  Moreover,
$\mu_Z(z\in Z: \rho(z)>n)=O(e^{-cn})$ for some $c>0$, and
if $a\in\alpha$, then $\tau\circ F^j$ is constant on $a$ for all
$j< \rho(a)$.
\end{itemize}
Assumptions similar to~(iv) were considered in~\cite{BruinTerhesiu18}.

\begin{rmk}  \label{rmk:beta}
	Note that $\varphi=\tau_\rho:Z\to\Z^+$,
\[
	\varphi(z)=\tau_{\rho(z)}(z)={\SMALL\sum_{j=0}^{\rho(z)-1}}\tau\circ F^j.
\]
It follows from assumptions~(i) and~(iv) by an
elementary calculation~\cite{Markarian04} (see also~\cite[Theorem~4]{ChernovZhang05}) that
$\mu_Z(\varphi>n)=O(n^{-\beta'})$ for any specified $\beta'<\beta$.
Moreover, it suffices in (iv) that $\mu_Z(\rho>n)=O(n^{-q})$ for $q$ sufficiently large.

In certain situations, including the examples in Subsection~\ref{sec-eg}, it is possible to achieve $\beta'=\beta$.  However, this does not lead to improvements in our main results, so we generally ignore this possibility.  (On the other hand, the upper bound result Corollary~\ref{cor-upper} does depend on the specific decay rate for $\mu_Z(\varphi>n)$.)
\end{rmk}

Let $h:X\to \T^d$ be a measurable map; following standard conventions we refer to $h$ as a {\em cocycle}.  We assume that $h$ is $C^\eta$.
(More precisely, view $\T^d$ as a compact group of diagonal $d\times d$ complex matrices with distance $|\;|$.  We require that $|h|_\eta=\sup_{x\neq x'}|h(x)-h(x')|/d(x,x')^\eta<\infty$.)
 Form the {\em toral extension}
\[
f_h:X\times\T^d\to X\times\T^d, \quad
f_h(x,\psi)=(fx,\psi+h(x)).
\]
The product measure $m=\mu\times d\psi$ is $f_h$-invariant.


 It is necessary to rule out certain pathological cases, since toral extensions of mixing uniformly expanding maps need not be mixing, and mixing toral extensions can mix arbitrarily slowly. Dolgopyat~\cite{Dolgopyat98b,Dolgopyat02} introduced  condition (v) below for proving superpolynomial decay of correlations for suspensions and compact group extensions of uniformly expanding/hyperbolic systems.  
Our final assumption is
\begin{itemize} 
\item[(v)] There do not exist approximate eigenfunctions.
\end{itemize}

The definition of approximate eigenfunctions is somewhat technical, and so is delayed until Section~\ref{sec-eigen} where we
show that condition~(v) holds typically in a strong probabilistic sense.
In Appendix~\ref{app-good}, we show that condition~(v) holds for an open and dense set of smooth toral extensions.

\subsection*{Mixing results for toral extensions}

Let $f:X\to X$ be a topologically mixing map with ergodic invariant measure $\mu$ 
and $h:X\to\T^d$ be a $C^\eta$ cocycle, $\eta\in(0,1]$. We consider 
toral extensions $f_h:X\times\T^d\to X\times\T^d$ as described previously satisfying conditions~(i)--(v), where condition~(ii) can be replaced by the fact that~\eqref{eq-inf1}--\eqref{eq-fin} hold for observables $v_0\in\cB(Y)$ and $w_0\in L^\infty(Y)$.

Let $v:X\times\T^d\to\R$.  For $\eta\in(0,1]$,
define $|v|_{C^\eta}=\sup_{\psi\in\T^d}\sup_{x\neq y}|v(x,\psi)-v(y,\psi)|/d(x,y)^\eta$ and $\|v\|_{C^\eta}=|v|_\infty+|v|_{C^\eta}$.
Write $v\in C^\eta(X\times\T^d)$ if $\|v\|_{C^\eta}<\infty$.

For $\eta\in(0,1]$ and $p\in\N$,
write
$v\in C^{\eta,p}(X\times\T^d)$ if $v$ is $p$-times differentiable
with respect to~$\psi$ with derivatives that lie in $C^\eta(X\times\T^d)$,
and  set
$\|v\|_{C^{\eta,p}}=\sum_{|j|\le p}
\|\frac{\partial^jv}{\partial\psi^j}\|_{C^\eta}$.\footnote{
Given $j\in\Z^d$ with $j_1,\dots,j_d\ge0$, we write
$|j|=j_1+\dots+j_d$ and 
$\BIG\frac{\partial^j}{\partial\psi^j}=
\frac{\partial^{|j|}}{\partial\psi_1^{j_1}\cdots\partial\psi_d^{j_d}}$. }

For our main results, we consider observables $v,w$ supported in $Y\times\T^d$.
Let $v_0(y)=\int_{\T^d}v(y,\psi)\,d\psi$.
Suppose that
$v_0\in\cB(Y)$, $v-v_0\in C^{\eta,p}(Y\times\T^d)$, $w\in L^\infty(Y\times\T^d)$,
where $p\in\N$ is chosen sufficiently large (depending only on $\eta$, $d$, and the measure $\mu$ on $X$),
and write $\vertiii{v}=\|v_0\|+\|v-v_0\|_{C^{\eta,p}}$.
Let $\bar v=\int_{Y\times\T^d}v\,dm$,
$\bar w=\int_{Y\times\T^d}w\,dm$.

\begin{thm} \label{thm-infinite}
In the infinite measure case, define
$\tilde\ell$ and $d_\beta$ as in~\eqref{eq-ell}.
\begin{itemize}
\item[(a)] Suppose that $\beta\in(\frac12,1]$. Then
\[
\lim_{n\to\infty}
\tilde\ell(n)n^{1-\beta}\int_{Y\times\T^d}v\, w\circ f_h^n \,dm
=d_\beta \bar v\bar w.
\]
\item[(b)] Suppose either that $\beta\in(0,\frac12]$, or that 
$\beta\in(0,1]$ and either $\bar v=0$ or $\bar w=0$.  Then for all $\eps>0$,
\[
	\int_{Y\times\T^d}v\, w\circ f_h^n \,dm
	= O(n^{-(\beta-\eps)}\vertiii{v}|w|_\infty).
\]
\end{itemize}
\end{thm}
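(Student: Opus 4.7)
The natural approach is to Fourier decompose the toral variable and reduce to twisted base correlations. For $k\in\Z^d$ let
\begin{equation*}
v_k(x)=\int_{\T^d}v(x,\psi)e^{-2\pi ik\cdot\psi}\,d\psi
\end{equation*}
and analogously for $w$; this agrees with the introduction at $k=0$, and $\bar v=\bar v_0$, $\bar w=\bar w_0$. Writing $h_n=\sum_{j=0}^{n-1}h\circ f^j$ for the Birkhoff cocycle, the direct $\psi$-integration yields
\begin{equation*}
\int_{Y\times\T^d} v\cdot w\circ f_h^n\,dm = I_0(n)+\sum_{k\neq 0} I_k(n),\qquad I_k(n)=\int_Y v_{-k}\cdot (w_k\circ f^n)\cdot e^{2\pi ik\cdot h_n}\,d\mu.
\end{equation*}
The zero Fourier mode $I_0(n)=\int_Y v_0\,w_0\circ f^n\,d\mu$ is a base correlation, so \eqref{eq-inf1} supplies the leading asymptotic in (a) and \eqref{eq-inf2} supplies the bound in (b). It therefore suffices to show $\sum_{k\neq 0}|I_k(n)|=O(n^{-(\beta-\eps)}\vertiii{v}|w|_\infty)$, which is negligible with respect to the $\tilde\ell(n)^{-1}$ leading behaviour.

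For the nonzero modes I would adapt the operator renewal machinery of Sarig and Gou\"ezel, in the infinite-measure form of \cite{MT12}, to the twisted setting. On the Gibbs-Markov return map $G\colon Z\to Z$ provided by (iii), form the induced cocycle $H(z)=\sum_{j=0}^{\varphi(z)-1}h(f^jz)$ and, for each $k\neq 0$, the twisted transfer operator $M_ku=L_G(e^{2\pi ik\cdot H}u)$, where $L_G$ is the transfer operator of $G$ on the Lipschitz space $F_\theta(Z)$. The no approximate eigenfunctions hypothesis (v), together with the full-branch Gibbs-Markov structure and the stretched-exponential tails of $\rho$ from (iii)--(iv), should deliver a uniform spectral gap with at most polynomial loss in $k$, i.e.\ $\|M_k^n\|\le C|k|^{q_1}\kappa^n$ for some $\kappa<1$. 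Since for $k\neq 0$ the twisted resolvent has no pole at $s=0$ --- exactly the pole that drives the leading $\tilde\ell(n)^{-1}$ behaviour of $I_0$ --- the tower/renewal equation over $(Y,\tau)$ translates this spectral gap into $|I_k(n)|\le C|k|^{q_2}n^{-(\beta-\eps)}|v_{-k}|_\cB|w_k|_\infty$ with $q_2$ independent of $k$.

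Finally, the assumption $v-v_0\in C^{\eta,p}$ permits $p$-fold integration by parts in $\psi$, giving $|v_{-k}|_\cB\le C|k|^{-p}\vertiii{v}$ and $|w_k|_\infty\le C|k|^{-p}|w|_\infty$ for $k\neq 0$. Choosing $p>q_2+d$ makes $\sum_{k\neq 0}|I_k(n)|$ absolutely convergent with the required bound, completing the proof.

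The main obstacle is the polynomial-in-$k$ uniform spectral-gap estimate on $\|M_k^n\|$: this is where (v) must be converted, via a Dolgopyat-type oscillatory cancellation argument adapted to the (possibly nonMarkovian) return map $G$, into a quantitative spectral estimate for the twisted operator family with controlled $|k|$-dependence. The infinite-measure renewal inversion that follows produces the $\eps$-loss in the exponent but is otherwise routine given \cite{MT12}.
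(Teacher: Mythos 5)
Your decomposition and strategy coincide with the paper's: the zero mode is handled by \eqref{eq-inf1}--\eqref{eq-inf2}, and the sum over nonzero modes is exactly the quantity $S_{v,w}(n)$ bounded by $O(n^{-(\beta-\eps)}\|v-v_0\|_{C^{\eta,p}}|w|_\infty)$ in Theorem~\ref{thm-S1}, whose proof (Sections~\ref{sec-induced}--\ref{sec-S1}) follows the twisted-transfer-operator, Dolgopyat-estimate and renewal-operator route you sketch. One small correction: your claimed bound $|w_k|_\infty\le C|k|^{-p}|w|_\infty$ is unjustified since $w$ is only assumed to lie in $L^\infty(Y\times\T^d)$ (no smoothness in $\psi$); the paper uses only $|w_k|_\infty\le|w|_\infty$, and the summability over $k$ comes entirely from the integration by parts on $v$, i.e.\ $\|v_{-k}\|_{C^\eta}\ll|k|^{-p}\|v\|_{C^{\eta,p}}$ with $p>\xi+d$, which already suffices.
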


\begin{rmk} \label{rmk-error}
 Under stronger conditions on $\mu(\tau>n)$, improved error rates and higher order asymptotics are obtained for nonuniformly expanding maps $f$ in~\cite{MT12,Terhesiu15}.  This applies in particular to the Markov intermittent maps considered in~\cite{LiveraniSaussolVaienti99} and to the nonMarkov examples in Subsection~\ref{sec-eg} (for the nonMarkov examples, the stronger conditions on $\mu(\tau>n)$ are proved in~\cite{BruinTerhesiu18} as described in Subsection~\ref{sec-eg}).
The results in this paper show that these higher order results apply also to typical toral extensions of these intermittent maps.
\end{rmk}

\begin{thm}  \label{thm-finite}
In the finite measure case, for all $\eps>0$, 
\[
	\int_{Y\times\T^d}v\, w\circ f_h^n \,dm - \bar v\bar w
	= \sum_{j>n}\mu(\tau>j) \bar v\bar w 
	+O(n^{-q}\vertiii{v}\, |w|_\infty),
\]
where $q=\beta-\eps$ if $\beta\ge2$ and $q=2\beta-2$ if $1<\beta<2$.
We can also take $q=\beta-\eps$ if 
$\beta>1$ and $\bar v=0$ or $\bar w=0$.
\end{thm}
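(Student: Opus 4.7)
The plan is to reduce Theorem~\ref{thm-finite} to the base estimate \eqref{eq-fin} by Fourier-decomposing in the torus fiber and bounding each twisted correlation uniformly in the frequency $k\in\Z^d$. Write
\[
v(x,\psi)=\sum_{k\in\Z^d} v_k(x)\, e^{2\pi i k\cdot\psi},\qquad w(x,\psi)=\sum_{k\in\Z^d} w_k(x)\, e^{2\pi i k\cdot\psi},
\]
so that the zeroth Fourier coefficients coincide with the fiber averages entering $\bar v$ and $\bar w$. The $C^{\eta,p}$ regularity of $v-v_0$ gives $\|v_k\|\le C\vertiii{v}\,|k|^{-p}$ for $k\neq 0$, while $|w_k|_\infty\le 2|w|_\infty$ uniformly in $k$; this supplies the eventual summability in $k$. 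Using $\int_{\T^d} e^{2\pi i(k+l)\cdot\psi}\,d\psi=\delta_{k+l,0}$ one obtains
\[
\int_{Y\times\T^d} v\, w\circ f_h^n\,dm=\sum_{k\in\Z^d} I_k(n),\qquad I_k(n)=\int_Y v_k\cdot w_{-k}\circ f^n\cdot e^{-2\pi i k\cdot S_nh}\,d\mu,
\]
with $S_nh=\sum_{j=0}^{n-1}h\circ f^j$.

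The zero-mode term $I_0(n)=\int_Y v_0\,w_0\circ f^n\,d\mu$ is precisely the correlation controlled by \eqref{eq-fin}. Since $\bar v_0\bar w_0=\bar v\bar w$, \eqref{eq-fin} yields the required main term $\bar v\bar w+\sum_{j>n}\mu(\tau>j)\bar v\bar w$ together with an error of order $E_\beta(n)\|v_0\|\,|w_0|_\infty$, which matches the size claimed in the theorem.

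The bulk of the work is to show $|I_k(n)|\le C|k|^{a}\,n^{-q}\,\|v_k\|\,|w_{-k}|_\infty$ for $k\neq 0$, with the same $q$ as in the statement (up to the same $\eps$-loss) and some fixed polynomial exponent $a$. Introduce the twisted transfer operator $L_k\phi=L(e^{-2\pi i k\cdot h}\phi)$, where $L$ is the transfer operator of $f$ with respect to $\mu$; then $I_k(n)=\int_Y L_k^n v_k\cdot w_{-k}\,d\mu$. To analyze $L_k^n$ one induces to $Z$ using conditions~(iii) and (iv): summing the twisted contributions across orbit segments of length $\varphi$ yields a twisted return-time operator $\hat L_k$ for the Gibbs-Markov map $G$ acting on a Lipschitz space $F_\theta(Z)$. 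The Dolgopyat-type condition~(v) on nonexistence of approximate eigenfunctions translates to spectral estimates of the form $\|\hat L_k^n\|_\theta\le C|k|^{a_1}\rho^n$ with $\rho<1$ and analogous polynomial bounds on $(I-z\hat L_k)^{-1}$ for $|z|\le 1$, uniformly in $|k|\ge 1$. Feeding these into the operator-renewal machinery of Gou\"ezel and Sarig, and using the exponential tails of $\rho$ in condition~(iv) to bridge between the first-return scheme $(F,\tau)$ and the auxiliary scheme $(G,\varphi)$, produces the desired bound $\|L_k^n v_k\|_{L^1}\le C|k|^{a}\,n^{-q}\|v_k\|$.

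Summing then gives $\sum_{k\neq 0}|I_k(n)|\le C\vertiii{v}\,|w|_\infty\,n^{-q}\sum_{k\neq 0}|k|^{a-p}$, convergent provided $p>a+d$, which fixes how large $p$ must be chosen in the statement. The improved rate $q=\beta-\eps$ when $\bar v=0$ or $\bar w=0$ follows from the corresponding improvement in \eqref{eq-fin} for the zero mode, since the nonzero modes already decay at rate $\beta-\eps$ or better. The main obstacle is the twisted renewal step: obtaining polynomial decay $n^{-q}$ for $L_k^n$ \emph{together with} explicit polynomial dependence on $|k|$, so that Fourier summability actually closes the argument; all other ingredients follow directly from (i)--(iv) on the base dynamics.
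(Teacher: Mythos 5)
Your proposal is correct and follows essentially the same route as the paper: Fourier decomposition in the fiber (Proposition~\ref{prop-Fourier}), the known estimate~\eqref{eq-fin} for the zero mode, and control of the nonzero modes via twisted transfer operators, the Dolgopyat-type bound $\|(I-R_k(\omega))^{-1}\|\le C|k|^\xi$, and operator renewal theory — which is exactly the content of Theorem~\ref{thm-S1} and Sections~\ref{sec-induced}--\ref{sec-S1}. Your observation that the nonzero modes decay at rate $n^{-(\beta-\eps)}$, so that the bottleneck $q=2\beta-2$ for $1<\beta<2$ comes only from the zero-mode error $E_\beta(n)$, matches the paper's accounting.
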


\paragraph{Strategy of the proofs}

For $L^2$ observables $v,w:X\times\T^d\to\R$, we write
\begin{align} \label{eq-vfourier}
	v(x,\psi)={\SMALL\sum_{k\in\Z^d}}v_k(x)e^{ik\cdot\psi},
\end{align}
where $v_k\in L^2(X,\C)$,\footnote{Since $v$ is real-valued, necessarily $v_{-k}$ is the complex conjugate of $v_k$.}
and similarly for $w$.
Conditions~(i) and (ii) above on the first return map $F=f^\tau:Y\to Y$ take care of the zero Fourier modes $v_0$ and
$w_0$, so the main contribution of the current paper is to deal with the nonzero modes.
In Section~\ref{sec-new}, we show how this can be achieved 
under conditions~(iii)--(v) using the induced map $G=f^\varphi:Z\to Z$.

\begin{rmk}  If the first return map $F=f^\tau:Y\to Y$ is a full branch Gibbs-Markov map, then there is no need for a second inducing scheme: we can simply take $G=F$.  (Conditions~(iii) and (iv) can now be ignored.)
Even here our results are new.
This simplified set up applies to the maps in Examples~\ref{eg-1} and~\ref{eg-2} below if they are Markov, and more generally to the class of Thaler maps~\cite{Thaler80}.

For the nonMarkovian ``AFN'' maps of~\cite{Zweimuller98,Zweimuller00}, we use both of the inducing schemes and our main theorems apply with $\cB(Y)$ taken to be the space of bounded variation functions on $Y$. 
This includes all cases in Examples~\ref{eg-1} and~\ref{eg-2}.
\end{rmk}

\paragraph{Upper bounds on decay of correlations}  In the finite measure case, we also obtain an upper bound for decay of correlations, see Corollary~\ref{cor-upper}.  This is simpler than the other results mentioned here, and we need only to use one inducing scheme, $G=f^\varphi:Z\to Z$, satisfying condition (iii)
with $\beta>1$.
In particular, our result applies to toral extensions of maps modelled by Young towers with polynomial tails and summable decay of correlations~\cite{Young99}, and shows under condition~(v) that the toral extension $f_h$ mixes at the same rate as $f$.

\subsection{Examples}
\label{sec-eg}

Prototypical examples include 
Pomeau-Manneville intermittent maps of the unit interval~\cite{PomeauManneville80} such as the following:

\begin{eg} \label{eg-1}
$f(x)=\begin{cases} x(1+c_1^\gamma x^\gamma), & x\in[0,\frac12) \\
2x-1, & x\in[\frac12,1] \end{cases}$,
where $\gamma>0$, $c_1\in(0,2]$.
When $c_1=2$, the map $f$ is Markov and was
introduced in~\cite{LiveraniSaussolVaienti99}.
\end{eg}

\begin{eg} \label{eg-2}
$f(x)=x(1+c_2x^\gamma) \bmod1$,
where $\gamma>0$, $c_2>0$.
If $c_2$ is an integer, then $f$ is Markov and belongs to the class of maps studied by~\cite{Thaler80}.
\end{eg}

In general, the above maps $f$ are nonMarkovian and are examples of ``AFN maps''~\cite{Zweimuller98,Zweimuller00}.
For all $\gamma>0$, there is a unique (up to scaling) $\sigma$-finite invariant measure $\mu$ equivalent to Lebesgue and the measure is finite if and only if $\gamma<1$.  

We now describe how to verify assumptions (i)-(v) for these examples.
In Example~\ref{eg-1}, it is convenient to take $Y=[\frac12,1]$.
In Example~\ref{eg-2}, a convenient choice is to let $Y$ be the domain of the right-most branch.  
Let $\beta=1/\gamma$.
By the proof of~\cite[Lemma~9.1]{BruinTerhesiu18} and 
by~\cite[Lemma~9.2]{BruinTerhesiu18}, there are constants $c_1,c_2>0$ such that
\begin{align} \nonumber
 \mu_Z(\varphi>n)& =c_1n^{-\beta}+O(n^{-2\beta},n^{-(\beta+1)}\log n),
\\ 
\mu(\tau>n)& =c_2\mu_Z(\varphi>n)+O(n^{-(\beta+1)}),
\label{eq-rate}
\end{align}
so condition (i) is satisfied.   

Condition~(ii) holds with 
$\cB(Y)$ taken to be the space of bounded variation functions on $Y$
(see for example~\cite[Proposition~11.10]{MT12}) and conditions~(iii,iv) are verified in~\cite[Section~9]{BruinTerhesiu18}. 
Condition~(v) is satisfied for typical H\"older cocycles $h$, see Proposition~\ref{prop-typical}, and also for an open and dense set of smooth cocycles, see Appendix~\ref{app-good}.  

Hence our main results apply to typical toral extensions of nonMarkovian intermittent maps.  Since the estimates~\eqref{eq-rate} for $\mu(\varphi>n)$ include error terms, we can obtain error rates and higher order asymptotics in the infinite measure case $\gamma\ge1$ as indicated in Remark~\ref{rmk-error}.

\vspace{1ex}
The remainder of the paper is structured as follows.
In Section~\ref{sec-new}, we state results,
Theorems~\ref{thm-S1} and~\ref{thm-S2}, on the nonzero Fourier modes in~\eqref{eq-vfourier} and use these to prove the results from the introduction.
In Section~\ref{sec-GM}, we recall the definition and basic properties of the Gibbs-Markov induced map $G=f^\varphi$.  
In Section~\ref{sec-eigen}, we recall the notions of eigenfunctions and approximate eigenfunctions.
In Section~\ref{sec-F}, we recall some standard results about smoothness of Fourier series.  
In Section~\ref{sec-induced}, we obtain some estimates for twisted transfer operators corresponding to the induced dynamics on $Y$, and we derive a Dolgopyat-type estimate.  
In Section~\ref{sec-T}, we obtain estimates for certain associated renewal operators.
Theorems~\ref{thm-S1} and~\ref{thm-S2} are proved in Sections~\ref{sec-S1} and~\ref{sec-S2} respectively.

\vspace{-1ex}
\paragraph{Notation}
We use ``big O'' and $\ll$ notation interchangeably, writing
$a_n=O(b_n)$ or $a_n\ll b_n$  if there is a constant
$C>0$ such that $a_n\le Cb_n$ for all $n\ge1$.

\section{Reduction to the nonzero Fourier modes}
\label{sec-new}

In this section, we show how to reduce to dealing with the nonzero Fourier modes in~\eqref{eq-vfourier}.
First, we require the following basic expansion of $\int_{X\times\T^d}v\,w\circ f_h^n\,dm$.
Note that $f_h^n(x,\psi)=(f^nx,\psi+h_n(x))$
where $h_n=\sum_{j=0}^{n-1}h\circ f^j$.

\begin{prop} \label{prop-Fourier}
Let $v,w:X\times\T^d\to\R$ be $L^2$ observables with Fourier series as
in~\eqref{eq-vfourier}.   Then
	$\int_{X\times \T^d}v\,w\circ f_h^n\,dm=
\sum_{k\in\Z^d}\int_X e^{ik\cdot h_n}v_{-k}\; w_k\circ f^n \,d\mu$ for all $n\ge0$.
\end{prop}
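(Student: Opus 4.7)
The plan is a direct Fourier computation, combined with a Fubini-type justification; the identity is essentially algebraic once one uses orthogonality of characters on $\T^d$, and there are no dynamical subtleties beyond invariance of $\mu$ under $f$.

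First, using the cocycle formula $f_h^n(x,\psi) = (f^n x,\, \psi + h_n(x))$, I would substitute into the Fourier series of $w$ to obtain
\[
	(w \circ f_h^n)(x,\psi) = \sum_{k \in \Z^d} w_k(f^n x)\, e^{ik\cdot h_n(x)}\, e^{ik\cdot \psi},
\]
which displays $w \circ f_h^n$ as a Fourier series in $\psi$ with coefficients $w_k(f^n x) e^{ik\cdot h_n(x)}$. Multiplying this (formally) by the Fourier expansion $v(x,\psi) = \sum_j v_j(x) e^{ij\cdot\psi}$ and integrating over $\psi \in \T^d$, the orthogonality relation $\int_{\T^d} e^{i(j+k)\cdot\psi}\, d\psi = \delta_{j+k,0}$ forces $j = -k$ and yields, for a.e.\ $x$,
\[
	\int_{\T^d} v(x,\psi)\, (w \circ f_h^n)(x,\psi)\, d\psi \; = \; \sum_{k \in \Z^d} v_{-k}(x)\, w_k(f^n x)\, e^{ik \cdot h_n(x)}.
\]

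Next, I would integrate over $X$ with respect to $\mu$ and swap sum and integral. To justify Fubini, a pointwise Cauchy--Schwarz on $\ell^2(\Z^d)$ bounds the integrand above by $\|v(x,\cdot)\|_{L^2(\T^d)}\|w(f^n x,\cdot)\|_{L^2(\T^d)}$; then $f$-invariance of $\mu$ together with Cauchy--Schwarz on $L^2(X,\mu)$ bounds the $X$-integral of this product by $\|v\|_{L^2(m)}\|w\|_{L^2(m)} < \infty$. Hence the iterated sum/integral is absolutely convergent, Fubini applies, and the stated identity follows. Rigorously, the initial orthogonality step is best carried out by first truncating $v$ and $w$ to finite partial Fourier sums (for which the computation is trivial) and then passing to the $L^2(X \times \T^d)$-limit, the error being controlled by the same Cauchy--Schwarz bound.

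The only mild subtlety is this Fubini/limit justification, but it is entirely routine via the two Cauchy--Schwarz applications above, so I expect no substantive obstacle.
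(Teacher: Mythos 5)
Your proposal is correct and follows essentially the same route as the paper: expand both observables into Fourier series, use orthogonality of the characters $e^{i(j+k)\cdot\psi}$ on $\T^d$ to retain only $j=-k$, and conclude. The paper presents this as a purely formal computation; your added Cauchy--Schwarz/Fubini justification for the interchange of sum and integral is a correct (and routine) supplement, not a different argument.
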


\begin{proof}
Expanding into Fourier series, 
\begin{align*}
	& \int_{X\times \T^d}v\,w\circ f_h^n\,dm=
\sum_{j,k\in\Z^d} \int_{X\times\T^d}
v_j(x)e^{ij\cdot\psi}w_k(f^nx)e^{ik\cdot(\psi+h_n(x))}\,dm
\\ & \;  =\sum_{j,k\in\Z^d} \int_X v_j(x)w_k(f^nx)e^{ik\cdot h_n(x)}\,d\mu \int_{\T^d}e^{i(j+k)\cdot\psi}\,d\psi
=\sum_{k\in\Z^d} \int_X v_{-k}(x)w_k(f^nx)e^{ik\cdot h_n(x)},
\end{align*}
as required.
\end{proof}

The next two results concern the nonzero Fourier modes
\[
S_{v,w}(n)=\sum_{k\in\Z^d\setminus\{0\}}\int_X e^{ik\cdot h_n}v_{-k}\; w_k\circ f^n \,d\mu.
\]

\begin{thm} \label{thm-S1}
Assume that the induced map $G=f^\varphi:Z\to Z$ 
and the $C^\eta$ cocycle $h:X\to\T^d$ satisfy conditions~(iii)--(v).

Then there exists $p\in\N$ such that for all
observables $v,w$ supported in $Y\times\T^d$ with
$v\in C^{\eta,p}(Y\times\T^d)$, $w\in L^\infty(Y\times\T^d)$, and for all $\eps>0$,
\[
S_{v,w}(n)=O(n^{-(\beta-\eps)}\|v-v_0\|_{C^{\eta,p}}|w|_\infty).
\]
\end{thm}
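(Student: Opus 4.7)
The plan is to decompose $S_{v,w}(n)$ into a sum over nonzero Fourier modes, estimate each term via twisted transfer operator and operator renewal methods, and then sum over $k$ using the smoothness of $v$ in the $\psi$-variable.

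First I would rewrite, for each fixed $k\neq 0$,
\[
\int_X e^{ik\cdot h_n}v_{-k}\,w_k\circ f^n\,d\mu=\int_X w_k\cdot L_k^n v_{-k}\,d\mu,
\]
where $L_k$ is the twisted transfer operator of $f$ defined by $\int_X (L_k u)\phi\,d\mu=\int_X e^{ik\cdot h}u\,(\phi\circ f)\,d\mu$. Thus the task reduces to uniform-in-$k$ control of $L_k^n$. Since $v,w$ are supported in $Y\times\T^d$, only the dynamics on $Y$ enters, and so I can profitably use the two-step inducing scheme $F=f^\tau$ and $G=f^\varphi=F^\rho$.

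Next, I would use operator renewal theory as developed in Sections~\ref{sec-induced} and~\ref{sec-T} to express $L_k^n$ in terms of the twisted transfer operator $T_k$ of the Gibbs-Markov map $G:Z\to Z$ with weight $e^{ik\cdot h_\varphi}$. Because $\varphi$ and $\tau\circ F^j$ (for $j<\rho$) are constant on partition elements of $G$ by (iii)--(iv), $T_k$ acts boundedly on $F_\theta(Z)$ and the standard renewal expansion carries over. The tail bound $\mu_Z(\varphi>n)=O(n^{-(\beta-\eps)})$ from (iii), combined with the exponential tails of $\rho$ in (iv), feeds the polynomial $n^{-(\beta-\eps)}$ decay into the renewal estimates.

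The crucial ingredient is the Dolgopyat-type estimate from Section~\ref{sec-induced}: under assumption (v) (nonexistence of approximate eigenfunctions), for every $k\neq 0$ the operator $T_k$ has spectral radius strictly less than one on $F_\theta(Z)$, with a bound of the form
\[
\|T_k^n\|_{F_\theta}\ll |k|^A\,\theta_0^n
\]
for constants $A>0$, $\theta_0<1$ independent of $k$. Plugging this into the renewal formula then yields, for each $k\neq 0$,
\[
\Bigl|\int_X e^{ik\cdot h_n}v_{-k}\,w_k\circ f^n\,d\mu\Bigr|\ll |k|^{A}\,n^{-(\beta-\eps)}\,\|v_{-k}\|_{F_\theta(Y)}\,|w_k|_\infty,
\]
with constant independent of $k$ and $n$.

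Finally I would sum over $k$. Since $v-v_0\in C^{\eta,p}(Y\times\T^d)$, the smoothness results of Section~\ref{sec-F} give $\|v_k\|_{F_\theta(Y)}\ll |k|^{-p}\|v-v_0\|_{C^{\eta,p}}$ and $|w_k|_\infty\le |w|_\infty$. Choosing $p$ with $p-A>d$ renders $\sum_{k\neq 0}|k|^{A-p}$ finite, and termwise summation produces the claimed bound $S_{v,w}(n)=O(n^{-(\beta-\eps)}\|v-v_0\|_{C^{\eta,p}}|w|_\infty)$.

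The main obstacle, and the heart of the argument, is establishing the Dolgopyat bound with only \emph{polynomial} growth $|k|^A$ and a \emph{$k$-independent} contraction rate $\theta_0<1$. Exponential growth in $k$ would not be absorbable by any finite amount of smoothness, and a $k$-dependent rate would destroy the uniform decay in $n$. Ruling these out is precisely what condition~(v) is designed for, and this is where the bulk of the technical work in Section~\ref{sec-induced} lies.
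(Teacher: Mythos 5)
Your outline follows the paper's architecture --- Fourier decomposition in $k$, the induced Gibbs--Markov map, twisted transfer and renewal operators, a Dolgopyat-type input from condition~(v), and summation over $k$ via smoothness in $\psi$ --- but two of the steps as you state them would not deliver the claimed bound.

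First, the key estimate is not of the form $\|T_k^n\|_{F_\theta}\ll |k|^A\theta_0^n$. A contraction statement for iterates of a single twisted operator of $G$ would control correlations of the induced map, not of $f$; to recover decay in $n$ for $f$ one must control the whole family $R_k(\omega)v=R(e^{ik\cdot H}e^{i\omega\varphi}v)$, $\omega\in[0,2\pi]$. What condition~(v) actually yields (Lemma~\ref{lem-approx}) is the uniform resolvent bound $\|(I-R_k(\omega))^{-1}\|_\theta\le C|k|^\xi$; the rate $n^{-(\beta-\eps)}$ for the renewal coefficients $T_{k,n}$ then comes from the $C^{\beta-\eps}$ regularity of $\omega\mapsto R_k(\omega)$ (Corollary~\ref{cor-RkHolder}, a consequence of the tail of $\varphi$), combined with Lemma~\ref{lem-inverse} and Proposition~\ref{prop-coeff}. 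Moreover, for $\beta\le1$ one must separately justify that the renewal coefficients coincide with the Fourier coefficients of $(I-R_k(\omega))^{-1}$ (Proposition~\ref{prop-fourier}, proved in Appendix~\ref{app-B}), since the series is not absolutely convergent there. Exponential decay of iterates is neither available nor the operative mechanism.

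Second, and more seriously, your argument never uses the hypothesis that $v,w$ are supported in $Y\times\T^d$, and without it the Gou\"ezel decomposition $L_k^n=\sum A_{k,n_1}T_{k,n_2}B_{k,n_3}+E_{k,n}$ only gives $O(n^{-(\beta-1)})$ --- that is exactly Theorem~\ref{thm-S2} --- because $\|E_{k,n}\|$ is controlled by $\sum_{j>n}\mu_Z(\varphi>j)$ and the tails of $\|A_{k,n}\|$ are likewise only $O(n^{-(\beta-1)})$. For $\beta\le1$, the infinite measure case that Theorem~\ref{thm-S1} must cover, that bound is vacuous. The improvement to $n^{-(\beta-\eps)}$ comes from sandwiching $L_k^n$ between indicators of $\hat Y=\pi^{-1}(Y)$ on the tower and showing, via condition~(iv) (exponential tails of $\rho$ and constancy of $\tau\circ F^j$ on partition elements), that $\mu_\Delta(\hat Y\cap\Delta_n)$ and $\mu_\Delta(\hat Y\cap D_n)$ have summable tails of order $n^{-(\beta-\eps)}$ (Propositions~\ref{prop-hatY} and~\ref{prop-hatY2}); this is what makes $1_{\hat Y}A_{k,n}$, $B_{k,n}1_{\hat Y}$ and $1_{\hat Y}E_{k,n}1_{\hat Y}$ small enough. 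This step is the substantive content of Theorem~\ref{thm-S1} relative to Theorem~\ref{thm-S2} and is missing from your plan.
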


\begin{thm} \label{thm-S2}
Let $h:X\to\T^d$ be a $C^\eta$ cocycle, $\eta\in(0,1]$, and assume nonexistence of approximate eigenfunctions.
Let $\varphi:Z\to\Z^+$ be a (general) return time such that 
$\mu_Z(\varphi>n)=O(n^{-\beta})$ where $\beta>1$, 
and $G=f^\varphi:Z\to Z$ is full branch Gibbs-Markov.
(Here, the return times $\tau$ and $\rho$ and the first return map $F$ are absent.)

Then there exists $p\in\N$ such that
$S_{v,w}(n)=O(n^{-(\beta-1)}\|v-v_0\|_{C^{\eta,p}}|w|_\infty)$ for all
observables $v,w$ with
$v\in C^{\eta,p}(X\times\T^d)$, $w\in L^\infty(X\times\T^d)$.
\end{thm}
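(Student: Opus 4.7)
The plan is to handle each nonzero Fourier mode separately via a renewal-operator analysis for the Gibbs-Markov induced map $G=f^\varphi:Z\to Z$, with the cocycle $e^{ik\cdot h}$ absorbed into a twisted transfer operator, and then to sum over $k\in\Z^d\setminus\{0\}$ using the toral regularity of $v$.

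First, fix $k\in\Z^d\setminus\{0\}$ and set $C_k(n)=\int_X e^{ik\cdot h_n}v_{-k}\,w_k\circ f^n\,d\mu$. Since $v$ and $w$ live on all of $X\times\T^d$, I begin with a first-entry/last-exit decomposition relative to $Z$, writing each orbit $x,fx,\dots,f^nx$ in terms of its first visit to $Z$ at some time $a\le n$ and its last visit at some time $n-b\ge a$. The contribution from large $a$ or $b$ is negligible because $\mu_Z(\varphi>j)=O(j^{-\beta})$ with $\beta>1$, and the middle portion is an iterate under $G$. This recasts $C_k(n)$ in terms of powers of the twisted transfer operator
\[
\hat L_k\phi = L_G\bigl(e^{iH_k}\phi\bigr),\qquad
H_k(z)=k\cdot{\SMALL\sum_{j=0}^{\varphi(z)-1}}h(f^jz),
\]
acting on a space $F_\theta(Z)$ of Lipschitz observables compatible with the Gibbs-Markov structure.

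Second, for each $k\ne 0$ I combine two ingredients: the Dolgopyat-type estimates of Section~\ref{sec-induced}, which exploit hypothesis~(v) to give a uniform spectral gap for $\hat L_k$ of the form $\|\hat L_k^n\|_{F_\theta}\ll |k|^{q}\theta^n$ for some $\theta\in(0,1)$ and $q\ge 0$ independent of $k$; and the renewal operator machinery of Section~\ref{sec-T}, which converts spectral information on $\hat L_k$ plus the polynomial tail $\mu_Z(\varphi>n)=O(n^{-\beta})$ into polynomial decay of $C_k(n)$. A Tauberian analysis adapted to the twisted setting then yields
\[
C_k(n) = O\bigl(|k|^{q'}n^{-(\beta-1)}\|v_{-k}\|_{F_\theta}\,|w_k|_\infty\bigr)
\]
for some $q'$ independent of $k$. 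The rate $n^{-(\beta-1)}$ is the natural one: the twisted operator has no eigenvalue at $1$, so there is no leading term, and the decay matches $\sum_{j>n}\mu_Z(\varphi>j)$.

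Third, sum over $k$. Since $v\in C^{\eta,p}$ in the $\psi$-direction, the smoothing estimates recalled in Section~\ref{sec-F} give $\|v_{-k}\|_{F_\theta}\ll|k|^{-p}\|v-v_0\|_{C^{\eta,p}}$, while $|w_k|_\infty\le|w|_\infty$ for all $k$. Choosing $p$ with $p>q'+d$ makes $\sum_{k\ne 0}|k|^{q'-p}$ convergent, and the required bound $S_{v,w}(n)=O(n^{-(\beta-1)}\|v-v_0\|_{C^{\eta,p}}\,|w|_\infty)$ follows.

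The main technical obstacle is the second step: obtaining a Dolgopyat-type estimate for $\hat L_k$ with merely polynomial growth in $|k|$, uniformly in $k\ne 0$, using only a polynomial tail for $\varphi$. One must control the Lipschitz norm of $e^{iH_k}$ on the (countably many) partition elements of $\alpha$, verify that it grows at most polynomially in $|k|$, and run the Dolgopyat cancellation argument in a form that is compatible with the Gibbs-Markov contraction of $G$ and with the absence of exponential moments for $\varphi$. This is exactly what Section~\ref{sec-induced} is designed to deliver; given it, the renewal computation in step two and the Fourier summation in step three are comparatively routine.
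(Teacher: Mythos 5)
Your proposal follows essentially the same route as the paper: Fourier decomposition in the fibre variable, a first-entry/last-exit (Gou\"ezel) operator decomposition $L_k^n=\sum_{n_1+n_2+n_3=n}A_{k,n_1}T_{k,n_2}B_{k,n_3}+E_{k,n}$ (the paper's~\eqref{eq-G}, formulated on the tower $\Delta$ rather than on $X$ itself), a Dolgopyat-type estimate for the twisted induced operators under condition~(v), and summation over $k$ using $\|v_{-k}\|_{C^\eta}\ll|k|^{-p}\|v-v_0\|_{C^{\eta,p}}$ with $p>\xi+d$. One correction to your step two, however: the input that Section~\ref{sec-induced} delivers is \emph{not} a $k$-uniform spectral gap $\|\hat L_k^n\|_{F_\theta}\ll|k|^q\theta^n$ with $\theta\in(0,1)$ independent of $k$. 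Condition~(v) does not yield such a bound, and the contraction one can extract from Proposition~\ref{prop-approx2} is of the form $(1-C^{-1}|k|^{-\xi_3})^{n/(\zeta_2\ln|k|)}$, whose rate degrades as $|k|\to\infty$; a literal attempt to prove your claimed gap would fail. What is actually proved and used is the resolvent bound $\|(I-R_k(\omega))^{-1}\|_\theta\le C|k|^\xi$ of Lemma~\ref{lem-approx}, uniform in the auxiliary frequency $\omega$ dual to the return time (note the twist $e^{i\omega\varphi}$ must be carried along: the operator to control is $R_k(\omega)v=R(e^{ik\cdot H}e^{i\omega\varphi}v)$, not merely $R(e^{ik\cdot H}v)$). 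Combined with the $C^{\beta-\eps}$ regularity of $\omega\mapsto R_k(\omega)$ (Corollary~\ref{cor-RkHolder}) and Lemma~\ref{lem-inverse}, this gives $\|T_{k,n}\|\ll|k|^\xi n^{-(\beta-\eps)}$ directly from Fourier-coefficient decay; no spectral gap and no Tauberian theorem are needed. Your identification of the final rate is otherwise on target: the convolution $A\star T\star B$ contributes only $O(|k|^\xi n^{-(\beta-\eps)})$, and it is the error operator $E_{k,n}$ (orbits that have not yet entered $Z$), bounded by $\sum_{j>n}\mu_Z(\varphi>j)=O(n^{-(\beta-1)})$, that dictates the rate $n^{-(\beta-1)}$ in the statement.
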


\begin{rmk} \label{rmk-eigen}  
	We say that $v:X\times\T^d\to\R$ is a {\em trigonometric polynomial} if only finitely many of the Fourier coefficients $v_k:X\to \C$ in~\eqref{eq-vfourier} are nonzero.  

If at least one of the observables $v,w$ is a trigonometric polynomial, then all of our results simplify.
Instead of requiring nonexistence of approximate eigenfunctions, we require only the nonexistence of ordinary eigenfunctions (see Section~\ref{sec-ef}).  Moreover, we can take $p=0$.

In the simplified situation of trigonometric polynomials,
Theorem~\ref{thm-S2} recovers and improves upon~\cite{BHM05} where similar results are obtained only for $\beta>2$.
The improved convergence rate for observables supported in $Y$ in
Theorem~\ref{thm-S1} 
was also not obtained in~\cite{BHM05}.
\end{rmk}

All of our results about toral extensions $f_h$ are immediate consequences of
Theorems~\ref{thm-S1} and~\ref{thm-S2} combined with 
known results for $f$. In particular,  in the proofs of Theorem~\ref{thm-infinite} and Theorem \ref{thm-finite}
below we use~\eqref{eq-inf1}--\eqref{eq-fin}, while in the upper bounds result on decay of correlation,
namely Corollary~\ref{cor-upper} below, we use the result of Young~\cite{Young99}.

\begin{pfof}{Theorem~\ref{thm-infinite}}
Write 
$\int_{Y\times\T^d}v\,w\circ f_h^n\,dm= \int_Yv_0\, w_0\circ f^n \,d\mu+S_{v,w}(n)$.
For $\beta>\frac12$, by~\eqref{eq-inf1},
$\lim_{n\to\infty}
\tilde\ell(n) n^{1-\beta}\int_Yv_0\, w_0\circ f^n \,d\mu
=d_\beta\bar v_0\bar w_0 =d_\beta\bar v\bar w$.  
By Theorem~\ref{thm-S1}, $\tilde\ell(n)n^{1-\beta}S_{v,w}(n)=O(n^{1-2\beta+2\eps}\|v-v_0\|_{C^{\eta,p}}|w|_\infty)$.
Since $\beta>\frac12$ and $\eps$
is arbitrarily small, part (a) follows.

For $\beta\in(0,\frac12]$, or if $\bar v_0=0$ or $\bar w_0=0$, by~\eqref{eq-inf2},
$\int_Yv_0\, w_0\circ f^n \,d\mu=O(n^{-(\beta-\eps)}\|v_0\||w_0|_\infty)$.  
Hence part (b) follows from Theorem~\ref{thm-S1}.
\end{pfof}

\begin{pfof}{Theorem \ref{thm-finite}}
Write
$\int_{Y\times\T^d}v\,w\circ f_h^n\,dm-\bar v \bar w=g(n)+ S_{v,w}(n)$, where
$g(n)=
\int_Y v_0\;w_0\circ f^n\,d\mu
-\bar v_0\bar w_0$.
By~\eqref{eq-fin},
\[
g(n)= \sum_{j>n}\mu(\tau>j) \bar v_0\bar w_0+E_\beta(n)\|v_0\||w_0|_\infty
= \sum_{j>n}\mu(\tau>j) \bar v\bar w+E_\beta(n)\|v_0\||w_0|_\infty.
\]
The result follows from the estimates for $E_\beta(n)$ together with the estimates in
Theorem~\ref{thm-S1} for $S_{v,w}(n)$.
\end{pfof}

\begin{cor} \label{cor-upper}
Let $h:X\to\T^d$ be a $C^\eta$ cocycle, $\eta\in(0,1]$, and assume nonexistence of approximate eigenfunctions.
As in Theorem~\ref{thm-S2},
let $\varphi:Z\to\Z^+$ be a (general) return time such that 
$\mu_Z(\varphi>n)=O(n^{-\beta})$ where $\beta>1$, 
and $G=f^\varphi:Z\to Z$ is full branch Gibbs-Markov.

Then there exists $p\in\N$ such that
\[
	{\SMALL\int}_{X\times\T^d} v\,w\circ f_h^n\,dm-{\SMALL\int}_{X\times\T^d}v\,dm
{\SMALL\int}_{X\times\T^d}w\,dm
=O (n^{-(\beta-1)}
\|v\|_{C^{\eta,p}}|w|_\infty),
\]
for all $v\in C^{\eta,p}(X\times\T^d)$, $w\in L^\infty(X\times\T^d)$.
\end{cor}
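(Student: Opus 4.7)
The plan is to split the correlation into its zero Fourier mode and the sum $S_{v,w}(n)$ over nonzero modes, bound the former using Young's decay-of-correlations theorem~\cite{Young99} for the base map $f$, and bound the latter by applying Theorem~\ref{thm-S2}.

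Concretely, by Proposition~\ref{prop-Fourier} together with the observation that $\int_{X\times\T^d} v\,dm = \int_X v_0\,d\mu$ and similarly for $w$, I would write
\[
\int_{X\times\T^d} v\,w\circ f_h^n\,dm - \int v\,dm \int w\,dm = \Big(\int_X v_0\,w_0\circ f^n\,d\mu - \int_X v_0\,d\mu \int_X w_0\,d\mu\Big) + S_{v,w}(n),
\]
where now $v_0(x)=\int_{\T^d}v(x,\psi)\,d\psi$ is defined on all of $X$, and one has trivially $\|v_0\|_{C^\eta}\le\|v\|_{C^\eta}\le\|v\|_{C^{\eta,p}}$ together with $|w_0|_\infty\le|w|_\infty$.

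For the zero-mode term, the hypothesis that $G=f^\varphi:Z\to Z$ is full branch Gibbs-Markov with $\mu_Z(\varphi>n)=O(n^{-\beta})$, $\beta>1$, places $f$ within the Young-tower framework with polynomial return-time tail of order $\beta$. Young's theorem~\cite{Young99} then yields decay rate $O(n^{-(\beta-1)}\|v_0\|_{C^\eta}|w_0|_\infty)$ for H\"older observables, which is within the claimed bound. For the nonzero modes, the hypotheses on $G$, $\varphi$ and the cocycle $h$ are precisely those required by Theorem~\ref{thm-S2}, which supplies a $p\in\N$ such that $|S_{v,w}(n)|=O(n^{-(\beta-1)}\|v-v_0\|_{C^{\eta,p}}|w|_\infty)$; choosing $p$ at least this large and bounding $\|v-v_0\|_{C^{\eta,p}}\le 2\|v\|_{C^{\eta,p}}$ lets the two pieces combine into the stated estimate.

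There is no substantive obstacle here: the genuine work is done inside Theorem~\ref{thm-S2} (whose proof occupies Sections~\ref{sec-induced}--\ref{sec-S2}) and by Young's theorem. The only point that requires mention is the verification that condition~(iii) with $\beta>1$ really does place $f$ into the scope of~\cite{Young99}, but this is a standard consequence of $G$ being full branch Gibbs-Markov with polynomial tail $O(n^{-\beta})$: one simply suspends $G$ over $\varphi$ to obtain a Young tower of the required type.
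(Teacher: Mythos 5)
Your proposal is correct and follows essentially the same route as the paper: decompose the correlation into the zero Fourier mode plus $S_{v,w}(n)$, bound the zero mode by Young's theorem (the Gibbs--Markov induced map with tail $O(n^{-\beta})$ placing $f$ in the Young-tower framework), and bound the nonzero modes by Theorem~\ref{thm-S2}. Nothing further is needed.
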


\begin{proof}
Write
\[
{\SMALL\int_{X\times\T^d}}v\,w\circ f_h^n\,dm-{\SMALL\int_{X\times\T^d}} v\,dm {\SMALL\int_{X\times\T^d}} w\,dm=
{\SMALL\int_X} v_0\;w_0\circ f^n\,d\mu -{\SMALL\int_X} v_0\,d\mu{\SMALL\int_X} w_0\,d\mu
+ S_{v,w}(n). 
\]
By Young~\cite{Young99},
\[
|{\SMALL\int_X} v_0\;w_0\circ f^n\,d\mu -{\SMALL\int_X} v_0\,d\mu{\SMALL\int_X} w_0\,d\mu|
\le C n^{-(\beta-1)}\|v_0\|_{C^\eta} |w_0|_\infty ,
\]
for all $v_0$ H\"older and $w_0$ in $L^\infty$.
Hence the result follows from Theorem~\ref{thm-S2}.
\end{proof}

\section{Induced Gibbs-Markov maps}
\label{sec-GM}

Let $f:X\to X$ be a topologically mixing map satisfying 
conditions~(1)--(4) 
in assumption (iii) in Section~\ref{sec-intro}.
Let $G=f^\varphi:Z\to Z$ be the induced full branch Gibbs-Markov map as defined in assumption~(iii).
Standard references for background material on Gibbs-Markov maps are~\cite[Chapter~4]{Aaronson} and~\cite{AaronsonDenker01}.
In particular, a consequence of conditions~(1)--(3) is that there is a unique
ergodic $G$-invariant probability measure on $Z$ equivalent to $\mu_Z$ such that condition~(iii) still holds with this measure in place of $\mu_Z$.
Without loss we can suppose that $\mu_Z$ is this ergodic invariant probability measure.
Moreover $\mu_Z$ is mixing.
This leads to a unique (up to scaling) $f$-invariant measure $\mu$ on $X$ with $\mu|_Z$ equivalent
to $\mu_Z$, see for example~\cite[Theorem~1]{Young99}.  An
explicit definition of $\mu$ is given in Remark~\ref{rmk-mu}.
  The condition 
$\gcd\{\varphi(a):a\in\alpha\}=1$
implies that $f$ is topologically mixing, and in the finite measure case $\mu$ is mixing.

If $a_0,\dots,a_{n-1}\in\alpha$, we define the $n$-cylinder
$[a_0,\dots,a_{n-1}]=\bigcap_{j=0}^{n-1}G^{-j}a_j$.
Let $\theta\in(0,1)$ and define the symbolic
metric $d_\theta(z,z')=\theta^{s(z,z')}$ where the {\em separation time}
$s(z,z')$ is the greatest integer $n\ge0$ such that $z$ and $z'$ lie in the same $n$-cylinder.  In the remainder of this section, we fix $\theta\in[\lambda^{-\eta},1)$.
For convenience we rescale the metric $d$ on $X$ so that $\diam Z\le1$.

\begin{prop} \label{prop-d}
$d(z,z')^\eta\le d_\theta(z,z')$ for all $z,z'\in Z$.
\end{prop}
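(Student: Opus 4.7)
The plan is to convert the symbolic separation time into a metric contraction estimate using the uniform expansion property (2). Concretely, suppose $z,z'\in Z$ and set $n=s(z,z')$. If $n\ge1$, then by definition of the separation time, $z$ and $z'$ lie in a common $n$-cylinder, so $G^j z$ and $G^j z'$ lie in the same element of $\alpha$ for each $j=0,\dots,n-1$. Applying condition~(2) iteratively $n$ times yields
\[
d(G^n z, G^n z')\ge \lambda^n d(z,z').
\]
Since $G^n z, G^n z'\in Z$ and we have rescaled so that $\diam(Z)\le 1$, this gives $d(z,z')\le \lambda^{-n}$.

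Now raise both sides to the $\eta$ power and use the assumption $\theta\ge \lambda^{-\eta}$:
\[
d(z,z')^\eta \le \lambda^{-n\eta}\le \theta^n = d_\theta(z,z').
\]
The case $n=0$ is handled directly: $d(z,z')^\eta\le 1 = \theta^0 = d_\theta(z,z')$, again because $\diam(Z)\le 1$. If $s(z,z')=\infty$, then the contraction bound $d(z,z')\le \lambda^{-n}$ holds for every $n$, forcing $z=z'$, in which case both sides of the desired inequality vanish.

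There is really no obstacle here; the statement is essentially a bookkeeping exercise that records why $\theta$ is chosen in the interval $[\lambda^{-\eta},1)$. The only subtlety worth flagging is that the rescaling $\diam(Z)\le 1$ from the preceding paragraph is essential in the base case $n=0$ and to pass from $d(G^n z,G^n z')\le \diam(Z)$ to the clean bound $d(z,z')\le \lambda^{-n}$.
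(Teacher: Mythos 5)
Your proof is correct and is essentially identical to the paper's: both set $n=s(z,z')$, iterate the expansion condition~(2) to get $d(G^nz,G^nz')\ge\lambda^nd(z,z')$, invoke $\diam(Z)\le1$, and conclude via $\theta\ge\lambda^{-\eta}$. The explicit handling of the cases $n=0$ and $n=\infty$ is a harmless addition the paper leaves implicit.
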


\begin{proof}
Let $n=s(z,z')$.  By condition~(2),
\[
1\ge \diam Z\ge d(G^nz,G^nz')\ge \lambda^nd(z,z')\ge (\theta^{1/\eta})^{-n}d(z,z').
\]
Hence $d(z,z')^\eta\le  \theta^n=d_\theta(z,z')$.
\end{proof}

An observable $v:Z\to\R$ is {\em Lipschitz} if $\|v\|_\theta=|v|_\infty+|v|_\theta<\infty$ where
$|v|_\theta=\sup_{z\neq z'}|v(z)-v(z')|/d_\theta(z,z')$.
The set $F_\theta(Z)$ of Lipschitz observables is a Banach space.
More generally, we say that $v:Z\to\R$ is {\em locally Lipschitz}, and write
$v\in F_\theta^{\rm loc}(Z)$, if $v|_a\in F_\theta(a)$ for each $a\in\alpha$.  Accordingly, we define
$D_\theta v(a)=\sup_{z,z'\in a:\,z\neq z'}|v(z)-v(z')|/d_\theta(z,z')$.

We say that an observable $v=(v_1,\dots,v_d):Z\to\R^d$ lies in $F_\theta(Z,\R^d)$ if
$v_1,\dots,v_d\in F_\theta(Z)$, and we define
$|v|_\theta=\max_{j=1,\dots,d}|v_j|_\theta$
and $\|v\|_\theta= \max_{j=1,\dots,d}\|v_j\|_\theta$.
Similarly, we define $F_\theta^{\rm loc}(Z,\R^d)$ and
$F_\theta^{\rm loc}(Z,\T^d)$.

%

\begin{prop}  \label{prop-DH}
Let $h:X\to\T^d$ be a $C^\eta$ cocycle.  Define the 
induced cocycle 
$H(z)=\sum_{\ell=0}^{\varphi(z)-1}h(f^jz)$.  
Then
$H\in F_\theta^{\rm loc}(Z,\T^d)$, and
there is a constant $C_2\ge1$ such that
\begin{align*}
 D_\theta H(a)\le C_2|h|_{C^\eta} \varphi(a),
\end{align*}
for all $a\in\alpha$.
\end{prop}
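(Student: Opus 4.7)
The plan is to chain together four elementary estimates: the cocycle property on a partition element, the H\"older continuity of $h$, the bounded distortion condition~(3) of Gibbs--Markov, and the comparison between the dynamical and symbolic metrics (Proposition~\ref{prop-d}).

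First I would fix $a\in\alpha$ and $z,z'\in a$. Because $\varphi$ is constant on $a$, the telescoping formula
\[
H(z)-H(z')=\sum_{\ell=0}^{\varphi(a)-1}\bigl(h(f^\ell z)-h(f^\ell z')\bigr)
\]
holds componentwise, treating the torus values by lifting (which is fine locally on a cylinder, where the points are close). Hence it suffices to bound each summand by a constant multiple of $|h|_{C^\eta}d_\theta(z,z')$.

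Next I would use in order: (a) the $C^\eta$ estimate $|h(f^\ell z)-h(f^\ell z')|\le|h|_{C^\eta}d(f^\ell z,f^\ell z')^\eta$; (b) condition~(3) of the Gibbs--Markov structure, which gives $d(f^\ell z,f^\ell z')\le C_1 d(Gz,Gz')$ for all $0\le\ell<\varphi(a)$; (c) Proposition~\ref{prop-d}, which yields $d(Gz,Gz')^\eta\le d_\theta(Gz,Gz')$; and (d) the standard identity $s(Gz,Gz')=s(z,z')-1$ for $z,z'\in a$, which gives $d_\theta(Gz,Gz')=\theta^{-1}d_\theta(z,z')$. Chaining these gives
\[
|h(f^\ell z)-h(f^\ell z')|\le C_1^\eta\theta^{-1}|h|_{C^\eta}\,d_\theta(z,z').
\]
Summing the $\varphi(a)$ terms and taking the supremum over $z\neq z'$ in $a$ yields
\[
D_\theta H(a)\le C_2|h|_{C^\eta}\varphi(a),\qquad C_2:=C_1^\eta\theta^{-1}.
\]

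I do not anticipate a real obstacle: every step is a direct consequence of a hypothesis already in place. The only point that requires a line of care is the torus-valued aspect of $h$, which is handled by working componentwise with a local lift on each partition element; this is standard and introduces no additional constants. The spurious variable $\omega$ in the statement plays no role here and can be ignored for this proposition.
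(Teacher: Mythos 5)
Your proposal is correct and follows exactly the paper's own argument: constancy of $\varphi$ on $a$, the triangle inequality over the $\varphi(a)$ terms, the $C^\eta$ bound, condition~(3), Proposition~\ref{prop-d}, and the identity $d_\theta(Gz,Gz')=\theta^{-1}d_\theta(z,z')$, yielding $C_2=C_1^\eta\theta^{-1}$. Your side remarks about lifting the torus-valued cocycle locally and about the vacuous $\omega$ in the statement are both accurate and harmless.
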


\begin{proof}
Let $z,z'\in a$.  Then $\varphi(z)=\varphi(z')=\varphi(a)$.  
Let $C_1'=C_1^\eta$.  By condition~(4) and Proposition~\ref{prop-d},
\begin{align*}
& |H(z)-H(z')|
 \le \sum_{\ell=0}^{\varphi(a)-1}|h(f^\ell z)-h(f^\ell z')|
\le |h|_{C^\eta}\sum_{\ell=0}^{\varphi(a)-1}d(f^\ell z,f^\ell z')^\eta
\\ & \qquad  \le C_1' |h|_{C^\eta} \varphi(a)d(Gz,Gz')^\eta
\le C_1' |h|_{C^\eta} \varphi(a)d_\theta(Gz,Gz')
= C_1' \theta^{-1}|h|_{C^\eta} \varphi(a)d_\theta(z,z'),
\end{align*}
yielding the required estimate for $D_\theta H(a)$.
\end{proof}

The transfer operator $R:L^1(Z)\to L^1(Z)$ corresponding to the induced map
$G:Z\to Z$ is given by
$\int_Z Rv\,w\,d\mu_Z=\int_Z v\,w\circ G\,d\mu_Z$ for all $v\in L^1(Z)$,
$w\in L^\infty(Z)$.
Since we are now taking $\mu_Z$ to be invariant, this is the normalized transfer operator satisfying $R1=1$.
It can be easily seen that $(Rv)(z)=\sum_{a\in\alpha}e^{g(z_a)}v(z_a)$
where $z_a$ denotes the unique preimage of $z$ in $a$ under $G$ and $g$ is the potential defined in condition~(3) in the definition of Gibbs-Markov map (beginning of Section~\ref{sec-toral}). 
Similarly, $(R^nv)(z)=\sum_{a\in\alpha_n}e^{g_n(z_a)}v(z_a)$
where $z_a$ denotes the unique preimage of $z$ in $a$ under $G^n$ and $g_n(z)=\sum_{j=0}^{n-1}g(G^jz)$.
Moreover, there exists a constant $C_3$ such that
\begin{align} \label{eq-GM}
	e^{g_n(z)}\le C_3\mu_Z(a), \quad\text{and}\quad |e^{g_n(z)}-e^{g_n(z')}|\le C_3\mu_Z(a)d_\theta(G^nz,G^nz'),
\end{align}
for all $z,z'\in a$, $a\in\alpha_n$, $n\ge1$.

\begin{prop}  \label{prop-basic}
There exists $\tau\in(0,1)$ such that
$\|R^n v-\int_Zv\,d\mu_Z\|_\theta \le C\tau^n\|v\|_\theta$,
for all $n\ge1$ and $v\in F_\theta(Z)$.
\end{prop}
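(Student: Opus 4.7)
The plan is a standard application of the Ionescu-Tulcea--Marinescu/Hennion spectral gap machinery for full-branch Gibbs-Markov maps. First I would derive a Doeblin-Fortet (Lasota-Yorke) inequality of the form $\|R^n v\|_\theta \le C(\theta^n |v|_\theta + |v|_\infty)$, valid for all $n\ge 1$ and $v\in F_\theta(Z)$. Using the iterate $(R^nv)(z)=\sum_{a\in\alpha_n}e^{g_n(z_a)}v(z_a)$, the identity $d_\theta(z_a,z'_a)=\theta^n d_\theta(z,z')$ for preimages $z_a,z'_a$ in a common $n$-cylinder $a$, and the uniform distortion bound~\eqref{eq-GM}, the estimate reduces to the telescoping split
\[
e^{g_n(z_a)}v(z_a)-e^{g_n(z'_a)}v(z'_a)=e^{g_n(z_a)}(v(z_a)-v(z'_a))+(e^{g_n(z_a)}-e^{g_n(z'_a)})v(z'_a),
\]
summed over $a\in\alpha_n$ using $\sum_{a\in\alpha_n}\mu_Z(a)=1$.

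Next I would invoke Hennion's theorem. The inequality above, together with the sup-norm contraction $|Rv|_\infty\le|v|_\infty$ (a consequence of $R1=1$, which expresses the $G$-invariance of $\mu_Z$), shows that the essential spectral radius of $R$ on $F_\theta(Z)$ is at most $\theta<1$, so the spectrum in $\{|\lambda|>\theta\}$ consists of finitely many eigenvalues of finite multiplicity. To pin down the peripheral part, note that $1$ is an eigenvalue with eigenfunction $1$ and dual eigenfunctional $v\mapsto\int_Zv\,d\mu_Z$. Simplicity of $1$ and absence of other unimodular eigenvalues follows from the mixing of $(G,\mu_Z)$, which itself follows from the full-branch Gibbs-Markov structure together with $\gcd\{\varphi(a):a\in\alpha\}=1$. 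The spectral decomposition $R=P+Q$ with $Pv=\int_Zv\,d\mu_Z$, $PQ=QP=0$ and spectral radius of $Q$ strictly less than $1$ then yields $\|R^n v-\int_Zv\,d\mu_Z\|_\theta\le C\tau^n\|v\|_\theta$ for any $\tau$ strictly between the spectral radius of $Q$ and $1$.

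The main technical obstacle is the compactness hypothesis underlying Hennion's theorem: one needs the inclusion of the $\|\cdot\|_\theta$-unit ball into $(F_\theta(Z),|\cdot|_\infty)$ to be precompact. Since $Z$ need not be compact in $(X,d)$, one cannot directly invoke Arzel\`a-Ascoli; the standard workaround exploits the Gibbs-Markov partition by approximating observables using only finitely many cylinders, with the tail controlled via $\sum_a\mu_Z(a)=1$ and a truncation argument. This is routine in the Gibbs-Markov literature and introduces no conceptual difficulty beyond careful bookkeeping.
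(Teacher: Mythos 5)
Your argument is correct and is exactly what the paper invokes: the paper's proof simply cites the quasicompactness of $R$ for Gibbs--Markov maps (\cite[Section~4.7]{Aaronson}), and your Doeblin--Fortet inequality plus Hennion/ITM plus peripheral-spectrum analysis is the standard proof of that cited fact. One minor remark: mixing of $(G,\mu_Z)$ already follows from the full-branch property alone (each branch has full image), so the condition $\gcd\{\varphi(a)\}=1$ --- which concerns aperiodicity of the return time for $f$, not of $G$ --- is not needed here.
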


\begin{proof}  
This follows from the fact that the transfer operator $R$ has a spectral 
gap~\cite[Section~4.7]{Aaronson}.
\end{proof}

\section{Eigenfunctions and approximate eigenfunctions}
\label{sec-eigen}

In this section, we recall the notion of approximate eigenfunction, and show that typically there are none.  That is,  condition~(v) in the introduction holds typically.

In Subsection~\ref{sec-ef}, we consider ordinary eigenfunctions as mentioned in Remark~\ref{rmk-eigen}. (Non-existence of eigenfunctions is a sufficient condition
for a technical result on renewal operators, namely
Proposition~\ref{prop-fourier}, required in the proof of our main results.)
Approximate eigenfunctions are then considered in Subsection~\ref{sec-approx}.

Throughout this section, we work with toral extensions of a map $f:X\to X$ with
full branch Gibbs-Markov induced map $G=f^\varphi:Z\to Z$ corresponding to a general return time $\varphi:Z\to\Z^+$.  Given a measurable cocycle $h:X\to\T^d$, 
we define the induced cocycle $H:Z\to\T^d$ given by
$H(z)=\sum_{\ell=0}^{\varphi(z)-1}h(f^\ell z)$.

\subsection{Eigenfunctions}
\label{sec-ef}

In this subsection, we define eigenfunctions and recall some of their basic properties.
Let $S^1$ denote the unit circle in $\C$.

\begin{defn} \label{def-ef} A measurable function $v:Z\to S^1$ is an {\em eigenfunction}
if there exist {\em frequencies} $k\in\Z^d\setminus\{0\}$ and $\omega\in[0,2\pi)$ such that
$v\circ G=e^{ik\cdot H}e^{i\omega\varphi}v$.
\end{defn}

By Remark~\ref{rmk-eigen}, nonexistence of eigenfunctions is a sufficient condition for our main results in the case of trigonometric polynomials.  The next result shows that nonexistence of eigenfunctions is typical.

\begin{prop} \label{prop-typical}
Suppose that $h$ is $C^\eta$ for some $\eta>0$ and that $z_1$ and $z_2$ are fixed points for $G:Z\to Z$.
If there exists an eigenfunction, then there exist $k_1,k_2\in\Z^d\setminus\{0\}$ such that $k_1\cdot H(z_1)= k_2\cdot H(z_2)\bmod 2\pi$.  
\end{prop}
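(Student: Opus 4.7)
The plan is to derive the required relation by evaluating the eigenfunction identity at $z_1$ and $z_2$ and eliminating the additive frequency $\omega$.

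First I would address the main technical point: since $v$ is only measurable and the fixed points $z_1,z_2$ are $\mu_Z$-null, one needs a representative of $v$ that is well-defined at these points. By Proposition~\ref{prop-DH} (together with the fact that $\varphi$ is constant on each $a\in\alpha$), the twisted weight $e^{ik\cdot H+i\omega\varphi}$ is locally Lipschitz. A standard Gibbs-Markov transfer-operator argument, based on the quasicompactness of $R$ (Proposition~\ref{prop-basic}) applied to the twisted operator sending $u\mapsto R(e^{ik\cdot H+i\omega\varphi}u)$, then shows that any measurable solution $v:Z\to S^1$ of the functional equation $v\circ G=e^{ik\cdot H+i\omega\varphi}v$ must coincide $\mu_Z$-a.e.\ with an element of $F_\theta^{\rm loc}(Z,\C)$. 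After passing to this locally Lipschitz representative, the equation holds pointwise on each partition element, and $|v|\equiv 1$ on each such element. In particular, the identity is valid pointwise at $z_1$ and $z_2$.

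Next, evaluating at the fixed points and using $Gz_i=z_i$ together with $|v(z_i)|=1$ gives $e^{i(k\cdot H(z_i)+\omega\varphi(z_i))}=1$ for $i=1,2$, that is,
\[
k\cdot H(z_1)+\omega\varphi(z_1)\in 2\pi\Z,\qquad k\cdot H(z_2)+\omega\varphi(z_2)\in 2\pi\Z.
\]
Taking $\varphi(z_2)$ times the first relation and subtracting $\varphi(z_1)$ times the second eliminates $\omega$ to yield
\[
\varphi(z_2)\,k\cdot H(z_1)-\varphi(z_1)\,k\cdot H(z_2)\in 2\pi\Z.
\]
Setting $k_1=\varphi(z_2)k$ and $k_2=\varphi(z_1)k$, both of which lie in $\Z^d\setminus\{0\}$ since $k\neq 0$ and $\varphi\ge 1$, one obtains $k_1\cdot H(z_1)=k_2\cdot H(z_2)\bmod 2\pi$, as required.

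The hard part is the regularity upgrade in the first paragraph: promoting the measurable eigenfunction to a locally Lipschitz object so that it admits pointwise values at individual fixed points (which have measure zero). Once this regularity is granted, the rest is the brief algebraic manipulation above.
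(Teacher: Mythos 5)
Your argument is correct and essentially identical to the paper's: the paper likewise evaluates the eigenfunction identity at the two fixed points, eliminates $\omega$ by raising the two unimodular relations to the powers $\varphi(z_2)$ and $\varphi(z_1)$, and sets $k_1=\varphi(z_2)k$, $k_2=\varphi(z_1)k$. The only difference is in the regularity step, where the paper simply invokes Liv\v sic regularity for Gibbs--Markov maps to get continuity of $v$, while you sketch the standard quasicompactness route to the same conclusion (and are, if anything, slightly more careful about why pointwise evaluation at the measure-zero fixed points is legitimate).
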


\begin{proof}
Suppose that $v$ is an eigenfunction with frequencies $k\in\Z^d\setminus\{0\}$ and $\omega\in[0,2\pi)$.  
Since $G$ is Gibbs-Markov, it follows by Liv\v sic regularity that $v$ is continuous.
Since $z_j$ are fixed points, we obtain
$e^{ik\cdot H(z_j)}e^{i\omega \varphi(z_j)}=1$.
Hence
$e^{i\varphi(z_2)k\cdot H(z_1)}e^{i\omega \varphi(z_1)\varphi(z_2)}=1$ and
$e^{i\varphi(z_1)k\cdot H(z_2)}e^{i\omega \varphi(z_1)\varphi(z_2)}=1$.  It follows that
$e^{i\varphi(z_2)k\cdot H(z_1)}=
e^{i\varphi(z_1)k\cdot H(z_2)}$.  The result follows with 
$k_1=\varphi(z_2)k$ and $k_2=\varphi(z_1)k$.~
\end{proof}

It follows that nonexistence of eigenfunctions holds generically (for a residual set of $C^\eta$ cocycles $h:X\to\T^d$ for any fixed $\eta>0$).
An open and dense criterion is given in Appendix~\ref{app-good}.

\subsection{Approximate eigenfunctions}
\label{sec-approx}

For $k\in\Z^d$, $\omega\in[0,2\pi]$, define $M_{k,\omega}:L^\infty(Z)\to L^\infty(Z)$,
\[
M_{k,\omega}v=e^{-ik\cdot H}e^{-i\omega\varphi}v\circ G.
\]
Note that $v$ is an eigenfunction with frequencies $k$, $\omega$ if and only if $M_{k,\omega}v=v$.

\begin{defn}  
\label{def-approx}
There are {\em approximate eigenfunctions on a subset $Z_\infty\subset Z$}
if for any $\xi_0>0$, there exist constants $\xi,\,\zeta>\xi_0$ and $C\ge1$, and
sequences \[
u_j\in F_\theta(Z),\; k_j\in\Z^d\setminus\{0\},\; \omega_j\in[0,2\pi),\; \chi_j\in[0,2\pi),\;
n_j=[\zeta\ln|k_j|]\in\N,\;
 j\ge1,
\]
with $\lim_{j\to\infty}|k_j|=\infty$,
$|u_j|\equiv1$ and $|u_j|_\theta\le C|k_j|$, such that 
\[
|(M_{k_j,\omega_j}^{n_j}u_j)(z)-e^{i\chi_j}u_j(z)|\le C|k_j|^{-\xi},
\]
for all $z\in Z_\infty$ and all $j\ge1$.
\end{defn}

\begin{defn} A subset $Z_\infty\subset Z$ is called a {\em finite subsystem} of $Z$
if $Z_\infty=\bigcap_{n\ge1}G^{-n}Z_0$ where $Z_0$ is the union of finitely many elements
from the partition $\alpha$.
\end{defn}

\begin{defn}  We say that {\em there exist approximate eigenfunctions} if for every finite subsystem $Z_\infty\subset Z$ there exist approximate eigenfunctions on $Z_\infty$.
\end{defn}

\begin{prop} \label{prop-dio}
Let $z_1,z_2,z_3$ be three fixed points for $G:Z\to Z$ such that $\varphi(z_1)\neq\varphi(z_2)$.  
Let $Z_\infty$ be the finite subsystem corresponding to the union of the partition elements containing $z_1,z_2,z_3$.
For almost all $H(z_1),H(z_2),H(z_3)\in\T^d$, there are no approximate eigenfunctions on $Z_\infty$.
\end{prop}

\begin{proof}
Suppose that there exist approximate eigenfunctions on $Z_\infty$.  Then there exists sequences as in Definition~\ref{def-approx} such that
\[
|e^{-in_jk_j\cdot H(z_a)}e^{-in_j\omega_j\varphi(z_a)}-e^{i\chi_j}|=O(|k_j|^{-(d+2)}),
\]
for $a=1,2,3$, $j\ge1$.
Eliminating $\chi_j$, we obtain
\[
\dist(n_jk_j\cdot (H(z_a)-H(z_3))+n_j\omega_j(\varphi(z_a)-\varphi(z_3)),2\pi\Z)=O(|k_j|^{-(d+2)}),
\]
for $a=1,2$, $j\ge1$.
Define $\tilde k_j=n_j k_j\in\Z^d$ and
$\Omega= (\varphi(z_1)-\varphi(z_3))(H(z_2)-H(z_3))-(\varphi(z_2)-\varphi(z_3))(H(z_1)-H(z_3))$.
Eliminating $\omega_j$, we obtain
\[
\dist(\tilde k_j\Omega,2\pi\Z)=O(|k_j|^{-(d+2)}).
\]
For almost every value of $\Omega$, this Diophantine condition holds for at most finitely many values of $\tilde k_j\in\Z^d$, violating the requirement that $|k_j|\to\infty$.  Hence approximate eigenfunctions do not exist on $Z_\infty$.
\end{proof}

Field~{\em et al.}~\cite{FMT05,FMT07} introduced the notion of {\em good asymptotics}.  We recall this notion in Appendix~\ref{app-good} and show that it gives an open and dense criterion for nonexistence of approximate eigenfunctions for (piecewise) smooth toral extensions, including those in Examples~\ref{eg-1} and~\ref{eg-2}.

\section{Fourier analysis and H\"older norms}
\label{sec-F}

In this section, we recall some standard results about smoothness of Fourier 
series~\cite{Katzn}.
Let $A_n$ be a sequence of bounded linear
operators on some Banach space $\cX$ and set $A(\omega)=\sum_{n=1}^\infty
A_n e^{in\omega}$, $\omega\in[0,2\pi]$.
If $A\in L^1$ then we
define the Fourier coefficients $\hat A_n=(1/2\pi)\int_0^{2\pi}e^{-in\omega} A(\omega)\,d\omega$.

When speaking of regularity of $A$, we regard $A$ as a $2\pi$-periodic function on $\R$.
Let $|A|_{C^0}=\sup_\omega\|A(\omega)\|$.
For $m\in\N$, define 
$\|A\|_{C^m}=\max_{j=0,\dots,m}|A^{(j)}|_{C^0}$.
For $q=m+\alpha$, $m\in\N$, $\alpha\in[0,1)$,
define $\|A\|_{C^q}=\|A\|_{C^m}+|A^{(m)}|_\alpha$ where
$|A|_\alpha=\sup_{\omega_1\neq\omega_2}|A(\omega_1)-A(\omega_2)|/|\omega_1-\omega_2|^\alpha$.

\begin{prop}  \label{prop-holder}
Suppose that $\sum_{j>n}\|A_j\|\le Cn^{-q}$ for constants $C\ge1$, $q>0$,
where $q$ is not an integer.
Then there is a universal constant $D_q$ depending only on $q$ such that
$A:[0,2\pi]\to L(\cX,\cX)$ is $C^q$ and $\|A\|_{C^q}\le CD_q$.
\end{prop}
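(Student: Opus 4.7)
The plan is to show, using only the tail bound $T_n := \sum_{j > n}\|A_j\| \le Cn^{-q}$, that (a) $A$ is $m$ times continuously differentiable on $[0,2\pi]$ with $\|A\|_{C^m} \ll C$, and (b) the top derivative $A^{(m)}$ is $\alpha$-H\"older with constant $\ll C$, writing $q = m + \alpha$ with $\alpha \in (0,1)$. Both ingredients rest on a single Abel summation against $T_n$, exploiting the telescoping identity $\|A_n\| = T_{n-1} - T_n$, which holds because $T_n$ is a tail of a nonnegative series.

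For (a), I would estimate $\sum_{n=1}^N n^j \|A_n\|$ for $0 \le j \le m$ by summation by parts,
\begin{equation*}
\sum_{n=1}^N n^j (T_{n-1} - T_n) = T_0 - N^j T_N + \sum_{n=1}^{N-1}\bigl((n+1)^j - n^j\bigr) T_n.
\end{equation*}
The telescoping term $N^j T_N$ is bounded by $CN^{j-q}$, and since $(n+1)^j - n^j \ll n^{j-1}$ the last sum is bounded by $\ll C\sum n^{j-1-q} = C\sum n^{-1-(q-j)}$, which converges because $q - j \ge \alpha > 0$. This yields uniform convergence of the formally differentiated series of order up to $m$, and hence $A \in C^m$ with $\|A\|_{C^m} \ll C$.

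For (b), given $\omega_1 \ne \omega_2$ I would set $N = \lceil |\omega_1-\omega_2|^{-1}\rceil$ and split
\begin{equation*}
A^{(m)}(\omega_1) - A^{(m)}(\omega_2) = \sum_{n \le N}(in)^m A_n\bigl(e^{in\omega_1} - e^{in\omega_2}\bigr) + \sum_{n > N}(in)^m A_n\bigl(e^{in\omega_1} - e^{in\omega_2}\bigr).
\end{equation*}
The low-frequency part uses $|e^{in\omega_1} - e^{in\omega_2}| \le n|\omega_1 - \omega_2|$ combined with the Abel estimate $\sum_{n \le N} n^{m+1}\|A_n\| \ll C N^{1-\alpha}$, producing a contribution $\ll C|\omega_1-\omega_2|\cdot N^{1-\alpha} \ll C|\omega_1-\omega_2|^\alpha$. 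The high-frequency part uses $|e^{in\omega_1} - e^{in\omega_2}| \le 2$ together with the tail Abel estimate $\sum_{n > N} n^m \|A_n\| \ll N^m T_N + C\sum_{n>N} n^{m-1-q} \ll CN^{-\alpha}$, giving a matching bound $\ll C|\omega_1-\omega_2|^\alpha$. Adding the two bounds gives $|A^{(m)}|_\alpha \ll C$ and hence $\|A\|_{C^q} \le CD_q$.

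The main obstacle is really just bookkeeping: one must verify that every implicit constant depends only on $q$, not on $C$ or $N$, and in particular blows up precisely as $\alpha \to 0^+$ or $\alpha \to 1^-$. The non-integrality hypothesis is used exactly to keep $\alpha$ strictly inside $(0,1)$, away from the logarithmic borderlines where either $\sum_{n \le N} n^{-\alpha}$ (at $\alpha = 1$) or $\sum_{n > N} n^{-1-\alpha}$ (at $\alpha = 0$) fails to obey the power-law rate that the split requires.
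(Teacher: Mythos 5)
Your proof is correct, and it is essentially the argument the paper delegates to \cite[Lemma~2.4]{BHM05}: Abel summation against the tails $T_n$ to get $\sum_n n^j\|A_n\|\ll C$ for $j\le m$, followed by the frequency split at $N\sim|\omega_1-\omega_2|^{-1}$ to bound $|A^{(m)}|_\alpha$, with non-integrality of $q$ keeping $\alpha$ strictly inside $(0,1)$. No substantive differences from the standard proof.
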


\begin{proof}  The details are written out for example
in~\cite[Lemma~2.4]{BHM05}.
\end{proof}

\begin{prop}  \label{prop-coeff}
Suppose that $A:[0,2\pi]\to L(\cX,\cX)$ is $C^q$, $q>0$.
Then there is a universal constant $D_q$ depending only on $q$ such that
$\|\hat A_n\|\le D_q \|A\|_{C^q} n^{-q}$.
\end{prop}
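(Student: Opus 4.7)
The plan is to use the standard Fourier-analytic argument that trades smoothness for decay: integration by parts to extract powers of $n^{-1}$ from integer derivatives, together with a shift trick to extract the fractional gain $n^{-\alpha}$ from H\"older regularity. The argument is exactly the scalar case; the operator-valued setting plays no role beyond writing $\|\cdot\|$ in place of $|\cdot|$.

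Write $q=m+\alpha$ with $m=\lfloor q\rfloor\in\N\cup\{0\}$ and $\alpha\in[0,1)$. Since $A$ is $2\pi$-periodic and $C^m$, integrating by parts $m$ times in the definition of $\hat A_n$ kills all boundary terms and yields
\[
\hat A_n = \frac{1}{(in)^m}\cdot\frac{1}{2\pi}\int_0^{2\pi}e^{-in\omega}A^{(m)}(\omega)\,d\omega
=\frac{1}{(in)^m}\widehat{A^{(m)}}_n.
\]
In particular, when $\alpha=0$ we are done: the remaining integral has norm at most $|A^{(m)}|_{C^0}\le\|A\|_{C^q}$, so $\|\hat A_n\|\le\|A\|_{C^q}n^{-q}$.

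When $\alpha>0$, I would set $B=A^{(m)}$ and apply the classical shift trick: substituting $\omega\mapsto \omega+\pi/n$ in the integral defining $\hat B_n$ and using $e^{-i\pi}=-1$ together with $2\pi$-periodicity gives
\[
\hat B_n=-\frac{1}{2\pi}\int_0^{2\pi}e^{-in\omega}B(\omega+\pi/n)\,d\omega,
\]
so averaging with the original expression yields
\[
\hat B_n=\frac{1}{4\pi}\int_0^{2\pi}e^{-in\omega}\bigl(B(\omega)-B(\omega+\pi/n)\bigr)\,d\omega.
\]
The H\"older bound $\|B(\omega)-B(\omega+\pi/n)\|\le|B|_\alpha(\pi/n)^\alpha$ then gives $\|\hat B_n\|\le\tfrac12\pi^\alpha|A^{(m)}|_\alpha n^{-\alpha}\le\tfrac12\pi^\alpha\|A\|_{C^q}n^{-\alpha}$. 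Combining with the integration-by-parts identity produces $\|\hat A_n\|\le D_q\|A\|_{C^q}n^{-(m+\alpha)}=D_q\|A\|_{C^q}n^{-q}$, with $D_q=\tfrac12\pi^\alpha$ (or $D_q=1$ when $\alpha=0$) depending only on~$q$.

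There is no real obstacle here; the only points to be careful about are the vanishing of the boundary terms (which requires $C^m$ regularity of the $2\pi$-periodic extension, guaranteed by the hypothesis) and that the H\"older constant $|A^{(m)}|_\alpha$ is indeed controlled by $\|A\|_{C^q}$ (which is built into the definition of $\|\cdot\|_{C^q}$ given in the previous paragraph of the paper).
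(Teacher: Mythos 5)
Your proof is correct and is precisely the standard integration-by-parts plus half-period-shift argument that the paper invokes by citing \cite[Lemma~2.5]{BHM05}; the paper gives no independent argument of its own. The two points you flag as needing care (vanishing of boundary terms and control of $|A^{(m)}|_\alpha$ by $\|A\|_{C^q}$) are indeed covered by the paper's convention of regarding $A$ as a $2\pi$-periodic function on $\R$ and by its definition of $\|\cdot\|_{C^q}$, so nothing is missing.
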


\begin{proof}  The details are written out for example
in~\cite[Lemma~2.5]{BHM05}.
\end{proof}

\begin{rmk}\label{rmk-fourier}
If $q>1$ in Propositions~\ref{prop-holder} or~\ref{prop-coeff}, then $A_n=\hat A_n$ and the Fourier series is uniformly absolutely convergent.
\end{rmk}

Next, we consider H\"older norms  of families of operator functions
$A,\,B:[0,2\pi]\to L(\cX,\cX)$ where $A(\omega)$ is invertible for all $\omega\in[0,2\pi]$ and $B(\omega)=A(\omega)^{-1}$.

\begin{lemma}  \label{lem-inverse}
For each $m\in\N$, there is a universal constant $c_m>0$ such 
that for all $q=m+\alpha$, $\alpha\in[0,1)$,
\[
\|B\|_{C^q}\le c_m (1+\|B\|_{C^0})^{2q+2}(1+\|A\|_{C^q})^{2q+1}.
\]
\end{lemma}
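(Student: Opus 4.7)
The plan is to differentiate the identity $A(\omega)B(\omega)=I$ to obtain a recursion for $B^{(k)}$ in terms of $B^{(j)}$, $j<k$, and then induct on $m$.

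First, applying the Leibniz rule to $(AB)^{(k)}=0$ for $k\ge1$ and isolating the top term gives
\[
B^{(k)}=-B\sum_{j=0}^{k-1}\binom{k}{j}A^{(k-j)}B^{(j)},\qquad k\ge 1.
\]
This is the workhorse formula; every later estimate flows from it.

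Next I would prove by induction on $m$ the stronger statement that for all $\alpha\in[0,1)$,
\[
\|B\|_{C^{m+\alpha}}\le c_m(1+\|B\|_{C^0})^{2(m+\alpha)+2}(1+\|A\|_{C^{m+\alpha}})^{2(m+\alpha)+1}.
\]
For the base case $m=0$, the key identity is
\[
B(\omega_1)-B(\omega_2)=-B(\omega_1)\bigl[A(\omega_1)-A(\omega_2)\bigr]B(\omega_2),
\]
giving $|B|_\alpha\le\|B\|_{C^0}^2|A|_\alpha$, and hence $\|B\|_{C^\alpha}\le(1+\|B\|_{C^0})^2(1+\|A\|_{C^\alpha})$, which fits the claimed bound since the exponents $2\alpha+2\ge 2$ and $2\alpha+1\ge 1$.

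For the inductive step, I would use the recursion for $k=m$ together with the standard product rule for operator-valued $C^\alpha$ functions, $\|FG\|_{C^\alpha}\le 2\|F\|_{C^\alpha}\|G\|_{C^\alpha}$. For each term $B\,A^{(m-j)}B^{(j)}$ with $j\le m-1$, I bound
\[
\|B^{(j)}\|_{C^\alpha}\le\|B\|_{C^{j+\alpha}},\qquad \|A^{(m-j)}\|_{C^\alpha}\le\|A\|_{C^q},\qquad \|B\|_{C^\alpha}\le(1+\|B\|_{C^0})^2(1+\|A\|_{C^q}),
\]
where $q=m+\alpha$, and apply the inductive hypothesis to the first factor. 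Multiplying the three factors and keeping track of the binomial prefactors, the dominant contribution comes from $j=m-1$ and produces
\[
(1+\|B\|_{C^0})^{2(m-1+\alpha)+2}(1+\|A\|_{C^q})^{2(m-1+\alpha)+1}\cdot(1+\|A\|_{C^q})\cdot(1+\|B\|_{C^0})^2(1+\|A\|_{C^q}),
\]
which simplifies to the claimed $(1+\|B\|_{C^0})^{2q+2}(1+\|A\|_{C^q})^{2q+1}$. Summing over $j$ absorbs a constant depending only on $m$.

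The main obstacle will be the bookkeeping: the exponents $2q+2$ and $2q+1$ are exactly what falls out of the recursion once the inductive hypothesis is applied at level $m-1$, so I need to be careful to use the hypothesis with the H\"older exponent shifted to $(m-1)+\alpha$ rather than some smaller quantity, and to make sure the Hölder product rule is applied in the operator setting without losing composition constants. No new ideas beyond Leibniz, induction, and the product rule are required; the claim is purely a formal consequence of $BA=I$ and the definition of $C^q$.
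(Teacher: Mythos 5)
Your proposal is correct and follows essentially the same route as the paper: differentiate the identity $AB=I$ to express $D^mB$ in terms of lower-order derivatives of $B$ and derivatives of $A$, then induct on $m$, handling the H\"older seminorm via the product rule. The only cosmetic difference is that you use the one-step Leibniz recursion $B^{(m)}=-B\sum_{j<m}\binom{m}{j}A^{(m-j)}B^{(j)}$ whereas the paper writes $D^mB$ as a linear combination of three-factor terms $(D^{n_1}B)(D^{n_2}A)(D^{n_3}B)$ with $n_1+n_2+n_3=m$, $n_2\ge1$; the bookkeeping and resulting exponents are the same.
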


\begin{proof}  
First we consider the case $q=m\in\N$.  The case $m=0$ is trivial.
For $m\ge1$, note that
$D^mB$ is a linear combination of terms of the form
$(D^{n_1}B)(D^{n_2}A)(D^{n_3}B)$ with $n_1+n_2+n_3=m$ and $n_2\ge1$.
Inductively,
\begin{align*}
 |D^mB|_{C^0}  & \le c_m' \!\!\!\!\sum_{\stackrel{ n_1+n_2+n_3=m }{ n_2\ge1 }} \!\!\!\!
|D^{n_1}B|_{C^0} |D^{n_2}A|_{C^0} |D^{n_3}B|_{C^0} 
 \le c_m'\|A\|_{C^m} \!\!\!\! \sum_{n_1+n_3\le m-1} \!\!\!\!
\|B\|_{C^{n_1}} \|B\|_{C^{n_3}} 
\\ & \le c_m''\|A\|_{C^m} \!\!\!\!\sum_{n_1+n_3\le m-1} \!\!\!\!
(1+\|B\|_{C^0})^{2n_1+2n_3+4}(1+\|A\|_{C^m})^{2n_1+2n_3+2}
\\ & \le c_m''' 
(1+\|B\|_{C^0})^{2m+2}(1+\|A\|_{C^m})^{2m+1},
\end{align*}
establishing the required result when $q=m$ is an integer.

When $q=m+\alpha$, we have in addition that
\begin{align*}
 |D^mB|_\alpha  & \le c_m' \!\!\!\!\sum_{\stackrel{ n_1+n_2+n_3=m }{ n_2\ge1 }} \!\!\!\!
(2|D^{n_1}B|_\alpha |D^{n_2}A|_{C^0} |D^{n_3}B|_{C^0} 
 + |D^{n_1}B|_{C^0} |D^{n_2}A|_\alpha |D^{n_3}B|_{C^0} )
\\
& \le 3c_m'\|A\|_{C^q} \!\!\!\!\sum_{n_1+n_3\le m-1} \!\!\!\!\|B\|_{C^{n_1+\alpha}}\|B\|_{C^{n_3}}
\\ & \le c_m''\|A\|_{C^q} \!\!\!\!\sum_{n_1+n_3\le m-1} \!\!\!\!
(1+\|B\|_{C^0})^{2n_1+2\alpha+2n_3+4}(1+\|A\|_{C^q})^{2n_1+2\alpha+2n_3+2}
\\ & \le c_m''' 
(1+\|B\|_{C^0})^{2q+2}(1+\|A\|_{C^q})^{2q+1},
\end{align*}
completing the proof.
\end{proof}

\section{Estimates for induced twisted transfer operators}
\label{sec-induced}

Throughout this section, we assume condition~(iii) on the induced map
$G=f^\varphi:Z\to Z$.
Recall from Section~\ref{sec-GM} that
$R$ is the transfer operator
corresponding to $G$,
and that $H:Z\to\T^d$ is the induced cocycle $H(z)=\sum_{\ell=0}^{\varphi(z)-1}h(f^\ell y)$.  

For $k\in\Z^d$,
define the twisted transfer operators
$R_k:L^1(Z)\to L^1(Z)$,
$R_kv=R(e^{ik\cdot H}v)$.
We can write $R_k=\sum_{n=1}^\infty R_{k,n}$ where $R_{k,n}:L^1(Z)\to L^1(Z)$
is given by
\[
R_{k,n}v=R_k(1_{\{\varphi=n\}}v).
\]
Define $R_k(\omega):L^1(Z)\to L^1(Z)$ for $\omega\in[0,2\pi]$ 
by setting
\[
R_k(\omega)v=\sum_{n=1}^\infty R_{k,n}e^{in\omega}v=
R(e^{ik\cdot H}e^{i\omega\varphi}v).
\]
Note that
$R_k(\omega)^nv=R^n(e^{ik\cdot H_n}e^{i\omega\varphi_n}v)$
where $H_n=\sum_{j=0}^{n-1}H\circ G^j$,
$\varphi_n=\sum_{j=0}^{n-1}\varphi\circ G^j$.

In Subsection~\ref{sec-basic}, we derive some basic estimates for the operators $R_{k,n}$ and $R_k(\omega)$.
In Subsection~\ref{sec-Dolg}, we obtain a Dolgopyat-type estimate.

\subsection{Some basic estimates}
\label{sec-basic}

\begin{prop}  \label{prop-infty}
For all $k\in\Z^d$, $\omega\in[0,2\pi]$,  $n\ge1$,
(a) $|R_{k,n}|_\infty \le C_3 \mu_Z(\varphi=n)$ and 
(b) $|R_k(\omega)|_\infty\le1$.
\end{prop}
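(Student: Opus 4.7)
The plan is to prove both bounds by unwinding the definitions of $R_k(\omega)$ and $R_{k,n}$ via the explicit pointwise formula for the transfer operator $R$, and then exploiting the Gibbs-Markov bound \eqref{eq-GM} together with $G$-invariance of $\mu_Z$.

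First I would fix $z\in Z$ and, for each $a\in\alpha$, let $z_a\in a$ denote its unique $G$-preimage in $a$. Recall from Section~\ref{sec-GM} that $(Rv)(z)=\sum_{a\in\alpha}e^{g(z_a)}v(z_a)$, and from \eqref{eq-GM} (applied with $n=1$) that $e^{g(z_a)}\le C_3\,\mu_Z(a)$ for every $a\in\alpha$. Condition (iii) says that $\varphi$ is constant on partition elements, so I can write $1_{\{\varphi=n\}}=\sum_{a\in\alpha:\,\varphi(a)=n}1_a$ and freely pull $e^{i\omega\varphi(a)}$ and other functions of $\varphi$ outside the transfer operator on each $a$.

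For (a), I would compute directly
\[
(R_{k,n}v)(z)=\sum_{a\in\alpha:\,\varphi(a)=n}e^{g(z_a)}\,e^{ik\cdot H(z_a)}\,v(z_a),
\]
then take absolute values (noting $|e^{ik\cdot H}|=1$) to obtain
\[
|(R_{k,n}v)(z)|\le |v|_\infty\sum_{a\in\alpha:\,\varphi(a)=n}e^{g(z_a)}\le C_3|v|_\infty\!\!\sum_{a\in\alpha:\,\varphi(a)=n}\!\!\mu_Z(a)=C_3|v|_\infty\mu_Z(\varphi=n).
\]

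For (b), the same expansion yields $(R_k(\omega)v)(z)=\sum_{a\in\alpha}e^{g(z_a)}e^{ik\cdot H(z_a)}e^{i\omega\varphi(a)}v(z_a)$, and bounding the unimodular twists trivially gives $|(R_k(\omega)v)(z)|\le|v|_\infty\sum_{a\in\alpha}e^{g(z_a)}=|v|_\infty\,(R1)(z)$. Since $\mu_Z$ is $G$-invariant, $R1=1$, so $|R_k(\omega)v|_\infty\le|v|_\infty$.

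There is essentially no obstacle here: the statement is a direct consequence of the positivity of the transfer-operator kernel, the unimodularity of the twists $e^{ik\cdot H}$ and $e^{i\omega\varphi}$, and the fact that $\varphi$ is constant on elements of $\alpha$. The only point to flag is the use of the identity $R1=1$ in part (b), which relies on $\mu_Z$ being the $G$-invariant probability measure guaranteed by the Gibbs-Markov structure in Section~\ref{sec-GM}.
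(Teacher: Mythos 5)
Your proof is correct and matches the paper's argument: part (a) is exactly the paper's computation using the pointwise formula for $R$ and the bound $e^{g(z_a)}\le C_3\mu_Z(a)$ from~\eqref{eq-GM}, and part (b) spells out the paper's one-line appeal to $|R|_\infty=1$ via positivity and $R1=1$. No issues.
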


\begin{proof}
Let $z\in Z$.  For each $a\in\alpha$, let $z_a$ be the unique
preimage $z_a\in a\cap G^{-1}(z)$.  Then 
\[
	(R_{k,n}v)(z)=\sum_{a\in\alpha:\varphi(a)=n}e^{g(z_a)}e^{ik\cdot H(z_a)}v(z_a).
\]
By~\eqref{eq-GM},
\[
|R_{k,n}v|_\infty\le C_3 \sum_{a\in\alpha:\varphi(a)=n}\mu_Z(a)|v|_\infty
= C_3\mu_Z(\varphi=n)|v|_\infty,
\]
proving part~(a).

Since $|R|_\infty=1$ and $R_k(\omega)v=R(e^{ik\cdot H}e^{i\omega\varphi}v)$,
part (b) is immediate.
\end{proof}

\begin{lemma} \label{lem-Rk}
Let $\eps>0$ and fix $\theta\in[\lambda^{-\eta\eps},1)$.
There exists a constant $C\ge1$ such that for 
every $v\in F_\theta(Z)$, $k\in\Z^d\setminus\{0\}$, 
$\omega\in[0,2\pi]$,
and for every $n$-cylinder $a\in\alpha_n$, $n\ge1$,
\[
|R_k(\omega)^n(1_av)|_\theta \le C\mu_Z(a)\Bigl\{
|k|^\eps \sum_{j=0}^{n-1}
\theta^{n-j}\varphi(G^ja)^\eps|v|_\infty +\theta^n|v|_\theta\Bigr\}.
\]
\end{lemma}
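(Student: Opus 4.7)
The plan is to work pointwise from the closed-form expression $(R_k(\omega)^n(1_a v))(z) = e^{g_n(z_a)} e^{ik\cdot H_n(z_a)} e^{i\omega\varphi_n(z_a)} v(z_a)$, where $z_a \in a$ denotes the unique preimage of $z$ under $G^n$. Since $\varphi$ is constant on elements of $\alpha$ by condition~(iii), the induced return time $\varphi_n$ is constant on the $n$-cylinder $a$, so the factor $e^{i\omega\varphi_n}$ cancels when one forms the difference at two points $z, z' \in Z$. I would then split that difference by telescoping into three pieces isolating the variation of $e^{g_n}$, of $e^{ik\cdot H_n}$, and of $v$, and divide by $d_\theta(z,z')$.

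The first and third pieces are handled directly by the Gibbs-Markov estimates in~\eqref{eq-GM} together with the contraction identity $d_\theta(z_a, z'_a) = \theta^n d_\theta(z, z')$: they give contributions of the form $C_3\mu_Z(a)|v|_\infty$ and $C_3\mu_Z(a)\theta^n|v|_\theta$, respectively, the first of which will be absorbed into the main term coming from the middle piece (using that the $j=n-1$ summand there contributes a factor $\ge\theta$). For the middle piece I would first apply the standard interpolation $|e^{i\alpha}-e^{i\beta}|\le 2^{1-\eps}|\alpha-\beta|^\eps$ to produce the $|k|^\eps$ factor. The delicate point lies in the subsequent estimate on $|H_n(z_a)-H_n(z'_a)|^\eps$.

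Invoking Proposition~\ref{prop-DH} directly and then raising to the $\eps$-th power would leave $d_\theta^\eps$ dependence and geometric decay $\theta^{(n-j)\eps}$, both weaker than the statement requires. Instead, I would use the sharper intermediate bound from its proof, namely $|H_n(z_a)-H_n(z'_a)|\le C_1^\eta|h|_{C^\eta}\sum_{j=0}^{n-1}\varphi(G^ja)\,d(G^{j+1}z_a,G^{j+1}z'_a)^\eta$, phrased in the original metric $d$. Raising to the $\eps$-th power and using subadditivity $(x+y)^\eps\le x^\eps+y^\eps$ distributes the $\eps$ onto each summand, producing $\varphi(G^ja)^\eps d(\cdot,\cdot)^{\eta\eps}$. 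The hypothesis $\theta\in[\lambda^{-\eta\eps},1)$ is tailored precisely for the final step: it ensures $\theta^{1/\eps}\in[\lambda^{-\eta},1)$, so Proposition~\ref{prop-d} applied with parameter $\theta^{1/\eps}$ yields $d(w,w')^{\eta\eps}\le d_\theta(w,w')$ for all $w,w'\in Z$. Combined with $d_\theta(G^{j+1}z_a,G^{j+1}z'_a)=\theta^{n-j-1}d_\theta(z,z')$, this delivers the required bound $|k|^\eps\sum_{j=0}^{n-1}\theta^{n-j}\varphi(G^ja)^\eps|v|_\infty$ after absorbing a $\theta^{-1}$ into $C$. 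The step I expect to require the most care is exactly this interpolation calibration: the $\eps$ exponent must land on $|k|$ and $\varphi$ rather than on $d_\theta$ or the geometric rate $\theta^{n-j}$, and it is this asymmetry that forces the strengthening of the constraint on $\theta$ beyond what Proposition~\ref{prop-d} demands on its own.
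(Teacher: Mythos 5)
Your proposal is correct and follows essentially the same route as the paper: the same pointwise expansion with $e^{i\omega\varphi_n(a)}$ constant on the cylinder, the same three-term splitting with \eqref{eq-GM} handling the first and third pieces, and the same mechanism for the middle piece (subadditivity of $x\mapsto x^\eps$ plus the calibration $\gamma=\theta^{1/\eps}\in[\lambda^{-\eta},1)$ so that $d^{\eta\eps}\le d_\theta$ and the contraction along cylinders yields $\theta^{n-j}d_\theta(z,z')$). The only cosmetic difference is that you raise to the power $\eps$ before converting from $d$ to the symbolic metric, whereas the paper applies Proposition~\ref{prop-DH} in the metric $d_\gamma$ first and then takes the $\eps$-th power; the computations are identical.
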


\begin{proof}
Let $z\in Z$, and  let $z_a$ be the unique
preimage $z_a\in a\cap G^{-n}(z)$.  Noting that $\varphi_n$ is constant on $a$,
\[
	(R_k(\omega)^n(1_av))(z)=e^{g_n(z_a)}e^{ik\cdot H_n(z_a)}e^{i\omega\varphi_n(a)}v(z_a),
\]
and
\[
(R_k(\omega)^n(1_av ))(z)-(R_k(\omega)^n(1_av ))(z')=I_1+I_2+I_3,
\]
where 
\begin{align*}
	I_1 & = (e^{g_n}(z_a)-e^{g_n}(z_a'))e^{ik\cdot H_n(z_a)}e^{i\omega\varphi_n(a)}v(z_a), \\
	I_2 & = e^{g_n}(z_a')(e^{ik\cdot H_n(z_a)}-e^{ik\cdot H_n(z_a')})e^{i\omega\varphi_n(a)}v(z_a), \\
	I_3 & = e^{g_n}(z_a')e^{ik\cdot H_n(z_a')}e^{i\omega\varphi_n(a)}(v(z_a)-v(z_a')).
\end{align*}
By~\eqref{eq-GM},
\begin{align*}
|I_1| & \le C_3 \mu_Z(a)|v|_\infty d_\theta(z,z'), \quad
|I_3|  \le C_3 \mu_Z(a)|v|_\theta d_\theta(z_a,z_a') = C_3\theta^n\mu_Z(a)|v|_\theta d_\theta(z,z').
\end{align*}
Using the inequality $|e^{ix}-1|\le 2|x|^\eps$ for all $x\in\R$, $\eps\in(0,1]$, 
\[
|I_2|\le 2C_3 \mu_Z(a)|k|^\eps|H_n(z_a)-H_n(z_a')|^\eps |v|_\infty.
\]
Let $\gamma\in[\lambda^{-\eta},1)$.  
By definition of $z_a,z_a'$,
\[
d_\gamma(G^jz_a,G^jz_a')=\gamma^{-j}d_\gamma(z_a,z_a')=\gamma^{n-j}d_\gamma(z,z'),
\]
for $j=0,\dots,n-1$, and so  
by Proposition~\ref{prop-DH} (with $\theta=\gamma$),
\begin{align*}
|H(G^jz_a)-H(G^jz_a')| & \le D_\gamma H(G^ja)d_\gamma(G^jz_a,G^jz_a')\le C_2|h|_{C^\eta} \varphi(G^ja)\gamma^{n-j}d_\gamma(z,z').
\end{align*}
Hence
\begin{align*}
|H_n(z_a)-H_n(z_a')| =\Bigl|\sum_{j=0}^{n-1}(H(G^jz_a)-H(G^jz_a'))\Bigr|
\le C_2|h|_{C^\eta} \sum_{j=0}^{n-1}\gamma^{n-j}\varphi(G^ja)\gamma^{s(z,z')}.
\end{align*}
It follows that
\begin{align*}
|I_2| & \le 2C_2C_3 |k|^\eps |v|_\infty\mu_Z(a)|h|_{C^\eta}^\eps
\Bigl|\sum_{j=0}^{n-1}\gamma^{n-j}\varphi(G^ja)\Bigr|^\eps \gamma^{\eps s(z,z')}
\\ & \le 2C_2C_3 |k|^\eps |v|_\infty\mu_Z(a)|h|_{C^\eta}^\eps
\sum_{j=0}^{n-1}\gamma^{\eps(n-j)}\varphi(G^ja)^\eps \gamma^{\eps s(z,z')}.
\end{align*}
Choosing $\gamma=\theta^{1/\eps}$,
\[
|I_2|\le 2C_2C_3 |k|^\eps |v|_\infty\mu_Z(a)|h|_{C^\eta}^\eps
\sum_{j=0}^{n-1}\theta^{n-j}\varphi(G^ja)^\eps d_\theta(z,z').
\]
Combining the estimates for $I_1,I_2,I_3$ yields the required result.
\end{proof}

\begin{cor} \label{cor-Rk}
Choose $\eps$ such that $\varphi^\eps\in L^1(Z)$
and let $\theta\in[\lambda^{-\eta\eps},1)$.
There exists a constant $C_4\ge1$ (depending on $h$, $\varphi$, $\eps$) such that for 
every $\theta\in(0,1)$, $v\in F_\theta(Z)$, $k\in\Z^d\setminus\{0\}$, 
$\omega\in[0,2\pi]$,
$n\ge1$,
\begin{itemize}
\item[(a)]
$\|R_{k,n}\|_\theta  \le C_4\mu_Z(\varphi=n)|k|^\eps n^\eps$.
\item [(b)] 
$|R_k(\omega)^nv|_\theta \le C_4\{|k|^\eps|v|_\infty+\theta^n|v|_\theta\}$.
\end{itemize}
\end{cor}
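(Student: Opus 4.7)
The plan is to derive both estimates as fairly direct consequences of Lemma~\ref{lem-Rk}, combined with Proposition~\ref{prop-infty}(a) for part~(a) and with $G$-invariance of $\mu_Z$ for part~(b).

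For part~(a), I would first write $R_{k,n}v = \sum_{a\in\alpha:\,\varphi(a)=n} R_k(1_a v)$. Since $R_k(\omega)(1_a v) = e^{i\omega\varphi(a)}R_k(1_a v)$ when $\varphi$ is constant on $a$, the single-iteration case of Lemma~\ref{lem-Rk} applies (at $n=1$, any $\omega$) and gives
\[
|R_k(1_a v)|_\theta \le C\mu_Z(a)\{|k|^\eps\theta\,\varphi(a)^\eps |v|_\infty + \theta|v|_\theta\}.
\]
Summing over $a$ with $\varphi(a)=n$ yields $|R_{k,n}v|_\theta \le C'\mu_Z(\varphi=n)\{|k|^\eps n^\eps|v|_\infty + |v|_\theta\}$. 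Combining with Proposition~\ref{prop-infty}(a), which provides $|R_{k,n}v|_\infty \le C_3\mu_Z(\varphi=n)|v|_\infty$, the stray $|v|_\theta$ term is absorbed into $\|v\|_\theta$ multiplied by $|k|^\eps n^\eps$, giving the required operator bound.

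For part~(b), I would decompose $v = \sum_{a\in\alpha_n} 1_a v$ and apply Lemma~\ref{lem-Rk} to each piece. The seminorm term telescopes trivially to $C\theta^n|v|_\theta\sum_{a\in\alpha_n}\mu_Z(a) = C\theta^n|v|_\theta$. The crux is to control
\[
\sum_{a\in\alpha_n}\mu_Z(a)\,\varphi(G^j a)^\eps, \qquad 0\le j\le n-1.
\]
The key structural observation is that $\varphi$ is constant on elements of $\alpha$, so $\varphi\circ G^j$ is constant on each $n$-cylinder; hence this sum equals $\int_Z \varphi^\eps\circ G^j\,d\mu_Z$, which by $G$-invariance of $\mu_Z$ equals the fixed finite constant $\int_Z\varphi^\eps\,d\mu_Z$ (finite by the standing choice of $\eps$). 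The geometric factor $\sum_{j=0}^{n-1}\theta^{n-j}$ is bounded by $\theta/(1-\theta)$, so the $|v|_\infty$ contribution is uniformly controlled by a constant multiple of $|k|^\eps|v|_\infty$, completing the estimate.

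There is no genuine obstacle, but the structural heart of the argument is this telescoping via $G$-invariance: the integrability hypothesis $\varphi^\eps\in L^1(\mu_Z)$ (equivalently the restriction on $\eps$) is imposed precisely so that the $n$-cylinder sum does not grow with $n$. Once this is in place, the rest is bookkeeping of constants, with $C_4$ absorbing $C_3$, $\|\varphi^\eps\|_{L^1}$, the geometric series constant, and the constant $C$ from Lemma~\ref{lem-Rk}.
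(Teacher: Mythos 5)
Your proof is correct and follows essentially the same route as the paper: part (a) is the $n=1$, $\omega=0$ case of Lemma~\ref{lem-Rk} summed over $\{a:\varphi(a)=n\}$ and combined with Proposition~\ref{prop-infty}(a), and part (b) sums the lemma over $n$-cylinders and controls $\sum_{a\in\alpha_n}\mu_Z(a)\varphi(G^ja)^\eps$ by $\int_Z\varphi^\eps\,d\mu_Z$ via $G$-invariance (the paper does this by re-summing over $(n-j)$-cylinders, which is the same computation). No gaps.
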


\begin{proof}  Taking $\omega=0$, $n=1$, $a\in\alpha$ in 
Lemma~\ref{lem-Rk}, we obtain
\[
|R_k(1_av)|_\theta 
\le C\mu_Z(a)\{|k|^\eps \varphi(a)^\eps)|v|_\infty +|v|_\theta\}
\le C\mu_Z(a)|k|^\eps \varphi(a)^\eps\|v\|_\theta.
\]
Summing over those $a$ with $\varphi(a)=n$,
we obtain that $|R_{k,n}v|_\theta  \ll \mu_Z(\varphi=n)|k|^\eps n^\eps\|v\|_\theta$.
 This combined with
Proposition~\ref{prop-infty}(a) yields part (a).

To prove part (b), we write $R_k(\omega)^nv=\sum_{a\in\alpha_n}R_k(\omega)^n(1_av)$ and sum the estimates from Lemma~\ref{lem-Rk}.
Note that
\begin{align*}
\sum_{a\in\alpha_n} \mu_Z(a)\sum_{j=0}^{n-1}\theta^{n-j}\varphi(G^ja)^\eps
& = \sum_{j=0}^{n-1}\theta^{n-j}\sum_{b\in\alpha_{n-j}}\varphi(b)^\eps \sum_{a\in\alpha_n:G^ja=b} \mu_Z(a) \\
& = \sum_{j=0}^{n-1}\theta^{n-j}\sum_{b\in\alpha_{n-j}}\varphi(b)^\eps \mu_Z(b) 
 \le \theta(1-\theta)^{-1} \sum_{b\in\alpha}\mu_Z(b)\varphi(b)^\eps.
\end{align*}
Hence $|R_k(\omega)^nv|_\theta \le C\{ \theta(1-\theta)^{-1} 
|k|^\eps \sum_{a\in\alpha}\mu_Z(a)\varphi(a)^\eps
|v|_\infty +\theta^n|v|_\theta\}$.
\end{proof}

\begin{cor} \label{cor-RkHolder}
Choose $\eps\in(0,\beta)$ so that $\beta-\eps$ is not an integer and such that $\varphi^\eps\in L^1(Z)$.
Let $\theta\in[\lambda^{-\eta\eps},1)$.

For each $k\in\Z^d\setminus\{0\}$, the map $R_k:[0,2\pi]\to L(F_\theta(Z),F_\theta(Z))$,
$\omega\mapsto R_k(\omega)$, is $C^{\beta-\eps}$.
Moreover, there is a constant $C\ge1$ independent of $k$ such that
$\|R_k\|_{C^{\beta-\eps}}\le C|k|^\eps$.
\end{cor}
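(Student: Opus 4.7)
The plan is to view $\omega\mapsto R_k(\omega)$ as an operator-valued Fourier series and apply Proposition~\ref{prop-holder}. The representation $R_k(\omega)=\sum_{n=1}^\infty R_{k,n}e^{in\omega}$ established at the start of Section~\ref{sec-induced} (using that $\varphi$ is integer-valued) presents $R_k:[0,2\pi]\to L(F_\theta(Z),F_\theta(Z))$ with operator Fourier coefficients $R_{k,n}$. Since $\beta-\eps$ is non-integer by hypothesis, Proposition~\ref{prop-holder} reduces the claim to verifying a tail bound of the form $\sum_{j>n}\|R_{k,j}\|_\theta\le C|k|^\eps n^{-(\beta-\eps)}$.

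For the tail bound, I would fix an auxiliary $\eps_0\in(0,\eps)$ and apply Corollary~\ref{cor-Rk}(a) with exponent $\eps_0$, which requires only $\varphi^{\eps_0}\in L^1(Z)$ (automatic from $\eps_0<\eps$), yielding
\[
\sum_{j>n}\|R_{k,j}\|_\theta \le C_4|k|^{\eps_0}\sum_{j>n}\mu_Z(\varphi=j)\,j^{\eps_0}.
\]

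To estimate the remaining sum, I would apply summation by parts with $a_j=\mu_Z(\varphi=j)$ and $b_j=j^{\eps_0}$, using condition~(iii), namely $\mu_Z(\varphi>j)=O(j^{-(\beta-\delta)})$ for any $\delta>0$. A direct calculation then gives
\[
\sum_{j>n}\mu_Z(\varphi=j)\,j^{\eps_0}=O(n^{-(\beta-\eps_0-\delta)})
\]
provided $\eps_0+\delta<\beta$. Choosing $\delta=\eps-\eps_0$ (permissible since $\eps<\beta$) converts this to $O(n^{-(\beta-\eps)})$, whence $\sum_{j>n}\|R_{k,j}\|_\theta\le C|k|^{\eps_0}n^{-(\beta-\eps)}\le C|k|^\eps n^{-(\beta-\eps)}$. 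Proposition~\ref{prop-holder} then delivers $R_k\in C^{\beta-\eps}$ with $\|R_k\|_{C^{\beta-\eps}}\le C|k|^\eps$.

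No single step is a genuine obstacle; the only mild bookkeeping is the interplay of the two small parameters $\eps_0$ and $\delta$, which must be arranged so that together they absorb the $j^{\eps_0}$ factor against the polynomial tail of $\varphi$ while still producing the target Hölder exponent $\beta-\eps$. The uniformity in $k$ comes for free because Corollary~\ref{cor-Rk}(a) already extracts the $|k|^{\eps_0}$ factor, and $|k|^{\eps_0}\le|k|^\eps$ for $|k|\ge1$.
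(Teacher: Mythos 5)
Your proposal is correct and follows essentially the same route as the paper: the paper's proof likewise invokes Corollary~\ref{cor-Rk}(a) to bound $\sum_{j>n}\|R_{k,j}\|_\theta$ via the tail of $\varphi$ and then applies Proposition~\ref{prop-holder}. The only difference is bookkeeping: the paper sums directly to get $\ll |k|^\eps n^{-(\beta-2\eps)}$ and implicitly relabels the arbitrary $\eps$, whereas you introduce $\eps_0$ and $\delta$ to land exactly on $\beta-\eps$; the one point to watch is that applying Corollary~\ref{cor-Rk}(a) with the smaller exponent $\eps_0$ strictly requires $\theta\ge\lambda^{-\eta\eps_0}$, a slightly narrower range than the stated $[\lambda^{-\eta\eps},1)$.
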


\begin{proof}
	Recall from Remark~\ref{rmk:beta} that $\mu_Z(\varphi>n)=O(n^{-(\beta-\eps)})$.
By Corollary~\ref{cor-Rk}(a), we have that
$\sum_{j>n}\|R_{k,j}\|_\theta \ll |k|^\eps n^{-(\beta-2\eps)}$.
Now apply Proposition~\ref{prop-holder}.
\end{proof}

\subsection{A Dolgopyat-type estimate}
\label{sec-Dolg}

The argument in this subsection is
a direct adaptation of an  argument in~\cite{M07} and is included mainly for completeness.  Propositions~\ref{prop-approx1} and~\ref{prop-approx2} below correspond 
to~\cite[Lemmas~3.12 and~3.13]{M07} respectively, and the Dolgopyat-type estimate, Lemma~\ref{lem-approx}, follows immediately.

Throughout, we fix $\eps\in(0,1]$ such that $\varphi^\eps\in L^1(Z)$, and  
$\theta\in[\lambda^{-\eta\eps},1)$.

\begin{rmk}  \label{rmk-basic}
As in~\cite[Section~6]{Dolgopyat98b}, we
define $\|v\|_k=\max\{|v|_\infty,|v|_\theta/(2C_4|k|^\eps)\}$.
Then it follows from Proposition~\ref{prop-infty}(a) and Corollary~\ref{cor-Rk}(a) that
$\|R_k(\omega)^n\|_k\le C_4+\frac12$ for all $n\ge 1$.  Moreover,
$\|R_k(\omega)^n\|_k\le 1$ for all $n\ge n_0$
(where $n_0=[\ln(2C_4)/(-\ln\theta)]+1$).
\end{rmk}

Since we are estimating operator norms with respect to $\|\;\|_k$,
we consider the unit ball $F_\theta(Z)_k=\{v\in F_\theta:\|v\|_k\le1\}$.
It follows from Remark~\ref{rmk-basic} that
$|R_k(\omega)^nv|_\infty\le 1$
and $|R_k(\omega)^nv|_\theta\le 2C_4|k|^\eps$ for all $v\in F_\theta(Z)_k$
and $n\ge n_0$.

Throughout, $Z_0$ denotes a fixed subset of $Z$ consisting of a finite union of
partition elements of $Z$, and $Z_\infty=\bigcap_{j\ge0}G^{-j} Z_0$.
Note that the potential $g$ is uniformly bounded on $Z_\infty$ and moreover
$g_n(z)\le n|1_{Z_\infty}g|_\infty$ for all $z\in Z_\infty$ and $n\ge1$.

\begin{prop}  \label{prop-approx1}
Let $\xi_2,\,\zeta_0>0$.  Then there exist $\xi_1>0$ and $\zeta>\zeta_0$,
such that the following is true for each fixed $|k|\ge2$, $\omega\in[0,2\pi]$,
setting $n(k)=[\zeta\ln |k|]$:

Suppose that there exists $v_0\in F_\theta(Z)_k$ such that
for all $x\in Z_\infty$ and all $j=0,1,2$,
\[
|(R_k(\omega)^{jn(k)}v_0)(x)|\ge 1-1/|k|^{\xi_1}.
\]
Then there exists $w\in F_\theta(Z)$ with
$|w|\equiv1$, $|w|_\theta\le 16C_4|k|$, and $\chi\in[0,2\pi)$ such that for all $z\in Z_\infty$,
\[
|(M_{k,\omega}^{n(k)}w)(z)-e^{i\chi}w(z)| \le 8/|k|^{\xi_2}.
\]
\end{prop}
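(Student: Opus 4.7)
The plan is to follow the classical convexity-based argument of Dolgopyat adapted in~\cite{M07}: near-saturation of the twisted transfer operator on $Z_\infty$ forces the complex numbers being averaged to align in phase, and iterating this observation (using the three hypotheses $j=0,1,2$) produces an approximate eigenfunction. The key mechanism is that for any convex combination $\sum_a\lambda_a z_a$ with $\lambda_a\geq 0$, $\sum_a\lambda_a=1$ and $|z_a|\leq 1$, the identity $\sum_a\lambda_a|e^{-i\chi}z_a-1|^2\leq 2(1-|\sum_b\lambda_b z_b|)$, with $\chi=\arg\sum_b\lambda_b z_b$, shows that near-saturation forces phase alignment of those $z_a$ carrying appreciable weight $\lambda_a$.

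First I would define $w$ by normalisation. The $j=0$ hypothesis gives $|v_0|\geq 1-|k|^{-\xi_1}\geq\tfrac12$ on $Z_\infty$ for $|k|$ large, so on a neighbourhood of $Z_\infty$ the function $w=v_0/|v_0|$ is well defined, has $|w|\equiv 1$, and a direct computation using $|v_0|_\theta\leq 2C_4|k|^\eps\leq 2C_4|k|$ (which follows from $v_0\in F_\theta(Z)_k$) gives $|w|_\theta\leq 2|v_0|_\theta/\inf|v_0|^2\leq 8|v_0|_\theta\leq 16C_4|k|$. I would then extend $w$ to all of $Z$ preserving $|w|\equiv 1$ and the Hölder bound; since the approximate eigenfunction relation is required only on $Z_\infty$, this extension is cosmetic.

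Next I would apply the convexity identity to $(R_k(\omega)^{n(k)}v_0)(x)=\sum_{a\in\alpha_{n(k)}}\lambda_a z_a$, where $\lambda_a=e^{g_{n(k)}(x_a)}$ and $z_a=e^{ik\cdot H_{n(k)}(x_a)+i\omega\varphi_{n(k)}(x_a)}v_0(x_a)$. For $x\in Z_\infty$, the $j=1$ hypothesis yields $\sum_a\lambda_a|e^{-i\chi(x)}z_a-1|^2\leq 2|k|^{-\xi_1}$ with $\chi(x)=\arg(R_k(\omega)^{n(k)}v_0)(x)$. On the branches $a$ that stay inside the finite subsystem $Z_\infty$, the potential $g$ is uniformly bounded below, so $\lambda_a\geq c^{n(k)}=|k|^{-\zeta|\ln c|}$ for a constant $c=c(Z_\infty)>0$, yielding a per-branch bound $|z_a-e^{i\chi(x)}|\ll|k|^{(\zeta|\ln c|-\xi_1)/2}$. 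Specialising to $z=x_a\in Z_\infty$ and using $M_{k,\omega}^{n(k)}w(z)=e^{-ik\cdot H_{n(k)}(z)-i\omega\varphi_{n(k)}(z)}w(G^{n(k)}z)$, this translates the phase alignment into the desired estimate, with $e^{i\chi(G^{n(k)}z)}$ temporarily playing the role of $e^{i\chi}$.

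The final step, which is the main obstacle, is to show that the point-dependent phase $\chi(\cdot)$ is essentially constant on $Z_\infty$, so that a single $\chi$ suffices; this is where the $j=2$ hypothesis enters. Applying the same convexity argument to $R_k(\omega)^{2n(k)}v_0=R_k(\omega)^{n(k)}(R_k(\omega)^{n(k)}v_0)$ compares the phases $\chi(x)$ at distinct points of $Z_\infty$ through a common layer of preimages and pins them to a common value up to $O(|k|^{-\xi_2})$. The remaining work is purely quantitative: one chooses $\zeta>\zeta_0$ large and then $\xi_1$ large enough in terms of $\xi_2$, $\zeta$ and $c$ so that the per-branch losses $|k|^{(\zeta|\ln c|-\xi_1)/2}$, together with the Hölder factors from Corollary~\ref{cor-Rk}, all dominate the target rate $|k|^{-\xi_2}$ on the finitely many $Z_\infty$-branches. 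This bookkeeping, rather than any conceptual difficulty, is the real technical issue.
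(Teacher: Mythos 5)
You have the right core mechanism: Dolgopyat's positivity/convexity argument, by which near-saturation of $|(R_k(\omega)^{n}v_0)(x)|$ forces each preimage term $e^{ik\cdot H_{n}(z)}e^{i\omega\varphi_{n}(z)}v_0(z)$ with $G^{n}z=x$ to align, up to $O\bigl((e^{-g_{n}(z)}|k|^{-\xi_1})^{1/2}\bigr)$, with the phase of the sum; restricting to $Z_\infty$, where $e^{-g_n}\le e^{n|1_{Z_\infty}g|_\infty}$, and choosing $\xi_1\ge 2\xi_2+\zeta|1_{Z_\infty}g|_\infty$ makes this error $O(|k|^{-\xi_2})$. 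That is exactly the paper's first step. The gap is in how you assemble the approximate eigenfunction from these alignment relations. Writing $v_j=R_k(\omega)^{jn}v_0=s_jw_j$ with $|w_j|\equiv1$, the $j=1$ and $j=2$ hypotheses produce relations of the form $e^{ik\cdot H_n}e^{i\omega\varphi_n}w_0\approx w_1\circ G^n$ and $e^{ik\cdot H_n}e^{i\omega\varphi_n}w_1\approx w_2\circ G^n$ on $Z_\infty$: they link \emph{consecutive} normalised iterates. Your choice $w=v_0/|v_0|=w_0$ therefore cannot work: from the first relation, $M_{k,\omega}^{n}w_0(z)=e^{-ik\cdot H_n(z)}e^{-i\omega\varphi_n(z)}w_0(G^nz)$ is controlled only in terms of $w_1(G^nz)$, and to convert this into $e^{i\chi}w_0(z)$ you would need to compare $w_0$ with $w_1$ on $G^nZ_\infty$ — information no hypothesis supplies (one would need a ``$j=-1$'' level). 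The eigenfunction equation closes only for $w=w_1$, via the comparison $w_2\approx e^{-i\chi}w_1$. Relatedly, your intermediate claim that $M_{k,\omega}^{n}w(z)\approx e^{i\chi(G^nz)}w(z)$ does not follow from the alignment even with a point-dependent phase.

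The second, more conceptual, problem is your final step: the phase $\chi(x)=\arg(R_k(\omega)^{n}v_0)(x)$, i.e.\ $w_1$, is \emph{not} approximately constant on $G^nZ_\infty$ (the alignment forces $w_1(G^nz)\approx e^{ik\cdot H_n(z^*)}e^{i\omega\varphi_n(z^*)}e^{i\chi_0}$ for a suitable preimage $z^*$, and $k\cdot H_n(z^*)$ varies by $O(|k|)$ over the cylinder), and the $j=2$ hypothesis does not pin it to a single value. What the $j=2$ hypothesis actually yields is that the two functions $w_1$ and $w_2$ differ by an approximately \emph{constant} phase. The paper proves this by a shadowing device absent from your proposal: fix $q\in Z_\infty$, set $e^{i\chi_j}=w_j(q)$, and for each $z$ pass to $z^*$ agreeing with $q$ for $n$ symbols and with $z$ thereafter, so $G^nz^*=G^nz$ and $d_\theta(z^*,q)\le\theta^n$. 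Evaluating both alignment relations at $z^*$ and using the Lipschitz bound $|w_j|_\theta\le 16C_4|k|$ together with the choice of $\zeta$ (so that $16C_4|k|\theta^{n(k)}\le|k|^{-\xi_2}$) gives $w_{j+1}(G^nz)\approx e^{ik\cdot H_n(z^*)}e^{i\omega\varphi_n(z^*)}e^{i\chi_j}$ for $j=0,1$, hence $w_2\approx e^{-i\chi}w_1$ on $G^nZ_\infty$ with $\chi=\chi_0-\chi_1$; substituting back into the second relation gives $M_{k,\omega}^{n}w_1\approx e^{i\chi}w_1$ on $Z_\infty$. This reference-point comparison is where the bound $|w|_\theta\le 16C_4|k|$ and the precise choice of $\zeta$ are consumed; it is the heart of the proof rather than bookkeeping, and without it your argument does not close.
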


\begin{proof}
We write $n=n(k)$ and $\tilde C_4=16C_4$.  Set
\[
\zeta=(\xi_2+2+\ln \tilde C_4/\ln 2)/(-\ln\theta), \qquad 
\xi_1=\max\{1,2\xi_2+\zeta|1_{Z_\infty}g|_\infty\}.
\]
If necessary, increase $\zeta$ so that $\zeta>\zeta_0$.
Following~\cite[Section~8]{Dolgopyat98b} and~\cite[Section~3]{M07}, we write
 $v_j=R_k(\omega)^{jn}v_0$ and $v_j=s_jw_j$, where $|w_j(x)|\equiv1$
and $1-1/|k|^{\xi_1}\le s_j(x)\le 1$ for $x\in Z_\infty$.
Note that $|v_j|_\theta\le 2C_4|k|^\eps$ so that $|w_j|_\theta\le \tilde C_4|k|^\eps\le \tilde C_4|k|$.
Rearrange $v_1=R_k(\omega)^nv_0$ to obtain
$w_1^{-1}R_k(\omega)^n(s_0w_0)=s_1\ge 1-1/|k|^{\xi_1}$.
It then follows from the definition of $R_k(\omega)$ that
$e^{g_n(z)}[1-\Re(e^{ik\cdot H_n(z)}e^{i\omega\varphi_n(z)}w_0(z)w_1^{-1}(G^nz))]\le 1/|k|^{\xi_1}$
for all $z\in Z$ with $G^nz\in Z_\infty$.  Hence
$|e^{ik\cdot H_n(z)}e^{i\omega\varphi_n(z)}w_0(z)-w_1(G^nz)|\le 2(e^{-g_n(z)}/|k|^{\xi_1})^{1/2}$.
Similarly, with $w_0$ and $w_1$ replaced by $w_1$ and $w_2$.
Restricting to $z\in Z_\infty$, we have
$e^{-g_n(z)}/|k|^{\xi_1}\le 1/|k|^{2\xi_2}$ and hence
\begin{align} \nonumber
|e^{ik\cdot H_n(z)}e^{i\omega\varphi_n(z)}w_0(z)-w_1(G^nz)| &\le 2/|k|^{\xi_2}, 
\\ 
|e^{ik\cdot H_n(z)}e^{i\omega\varphi_n(z)}w_1(z)-w_2(G^nz)| & \le 2/|k|^{\xi_2},
\label{eq-w}
\end{align}
for all $z\in Z_\infty$.
Fix $q\in Z_\infty$ and choose $\chi_0$, $\chi_1\in\R$ such that $w_j(q)=e^{i\chi_j}$ for $j=0,1$ and such that $\chi=\chi_0-\chi_1\in[0,2\pi)$.
To each $z$, we associate $z^*=q_0\cdots q_{n-1}z_nz_{n+1}\cdots\in Z_\infty$.
Then $z^*$ is within distance $\theta^n$ of $q$ and $G^nz^*=G^nz$.
We obtain
\begin{align*}
|e^{ik\cdot H_n(z^*)}e^{i\omega\varphi_n(z^*)}e^{i\chi_0}-w_1(G^nz)| &\le 2/|k|^{\xi_2}+\tilde C_4|k|\theta^n \le 3/|k|^{\xi_2}\\
|e^{ik\cdot H_n(z^*)}e^{i\omega\varphi_n(z^*)}e^{i\chi_1}-w_2(G^nz)| &\le 2/|k|^{\xi_2}+\tilde C_4|k|\theta^n\le3/|k|^{\xi_2}
\end{align*}
(by the choice of $\zeta$), and so
$|e^{-i\chi}w_1(G^nz)-w_2(G^nz)| \le 6/|k|^{\xi_2}$.
Substituting into~\eqref{eq-w} yields the required approximate eigenfunction
$w=w_1$.
\end{proof}

\begin{prop}  \label{prop-approx2}
For any $\xi_1,\zeta>0$, there exists $\xi>0$ and $C\ge1$ with the
following property.

Let $|k|\ge1$ and suppose that for any
$v\in F_\theta(Z)_k$
there exists $x_0\in Z_\infty$ and $j\le [\zeta\ln |k|]$
such that $|(R_k(\omega)^j v)(x_0)|\le 1-1/|k|^{\xi_1}$.
Then $\|(I-R_k(\omega))^{-1}\|_k \le C|k|^\xi$.
\end{prop}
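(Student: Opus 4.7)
My plan is to convert the pointwise hypothesis into a uniform operator contraction $\|R_k(\omega)^M\|_k \le 1 - c|k|^{-\xi'}$ at some time $M = O(\log|k|)$, and then conclude by summing a geometric series. Once such a contraction is in hand, submultiplicativity together with Remark~\ref{rmk-basic} gives
\[
\|R_k(\omega)^{qM+r}\|_k \le (C_4+\tfrac12)(1-c|k|^{-\xi'})^q, \qquad 0 \le r < M,\; q \ge 0,
\]
so that $\|(I-R_k(\omega))^{-1}\|_k \le \sum_{n\ge 0}\|R_k(\omega)^n\|_k = O(M|k|^{\xi'}) = O(|k|^\xi)$ for any $\xi > \xi'$.

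To produce the contraction, fix $v \in F_\theta(Z)_k$ and apply the hypothesis to find $j_0 \le [\zeta\ln|k|]$ and $x_0 \in Z_\infty$ with $|(R_k(\omega)^{j_0}v)(x_0)| \le 1 - |k|^{-\xi_1}$. Writing $u = R_k(\omega)^{j_0}v$ and using Corollary~\ref{cor-Rk}(b) to bound $|u|_\theta = O(|k|^\eps)$, Lipschitz continuity in $d_\theta$ forces $|u| \le 1 - \tfrac12|k|^{-\xi_1}$ on the $d_\theta$-ball about $x_0$ of radius $\asymp |k|^{-(\xi_1+\eps)}$. This ball is the $m_0$-cylinder $A$ containing $x_0$ with $m_0 \asymp \log|k|$; since $x_0 \in Z_\infty$, every symbol of $A$ lies among the finitely many partition elements of $Z_0$, and boundedness of the potential on $Z_0$ together with the Gibbs-Markov distortion~\eqref{eq-GM} yields the lower bound $\mu_Z(A) \ge D^{m_0} = |k|^{-\xi_2}$ for a suitable $\xi_2 > 0$.

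Next, propagate the drop via positivity of $R$ and $R^{m_0}\mathbf 1 = \mathbf 1$:
\[
|(R_k(\omega)^{j_0+m_0}v)(z)| \le R^{m_0}(|u|)(z) = 1 - R^{m_0}(1-|u|)(z) \le 1 - \tfrac12|k|^{-\xi_1} R^{m_0}(\mathbf 1_A)(z).
\]
The full branch property supplies, for each $z \in Z$, a unique preimage $z_A \in A$ under $G^{m_0}$ with weight $e^{g_{m_0}(z_A)} \ge D^{m_0}$ (again by boundedness of $g$ on the $Z_0$-symbols of $A$). Hence $|R_k(\omega)^N v|_\infty \le 1 - c|k|^{-\xi_3}$ with $\xi_3 = \xi_1+\xi_2$ and $N = [\zeta\ln|k|]+m_0$, uniformly over $v \in F_\theta(Z)_k$; the common time $N$ is reached using $|R_k(\omega)|_\infty \le 1$ to freeze the bound. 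A further application of $R_k(\omega)^{n_0}$, with $n_0$ enlarged so that $C_4\theta^{n_0}\le\tfrac14$, converts this $L^\infty$ drop into a $\|\cdot\|_k$ drop via Corollary~\ref{cor-Rk}(b): the new Lipschitz norm is at most $2C_4|k|^\eps\bigl(\tfrac12(1-c|k|^{-\xi_3})+\tfrac14\bigr) \le \tfrac34\cdot 2C_4|k|^\eps$, so once $|k| \ge K_0$ both coordinates of $\|\cdot\|_k$ lie below $1-c|k|^{-\xi_3}$. The finitely many bounded $|k|$ are absorbed into $C$.

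The main obstacle is establishing $\mu_Z(A) \ge |k|^{-\xi_2}$ in the localization step. On a generic depth-$m_0$ cylinder $e^{g_{m_0}}$ could decay faster than any polynomial in $|k|$, destroying the propagation; it is precisely because $x_0 \in Z_\infty$ constrains every symbol of $A$ to the finite collection $Z_0$ on which $g$ is uniformly bounded that each factor $e^g$ stays above a fixed positive constant, making $D^{m_0}$ polynomial in $|k|$. This is the sole place where the finite-subsystem hypothesis enters; the rest is standard Dolgopyat-style bookkeeping.
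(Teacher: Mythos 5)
Your argument is correct and has the same overall architecture as the paper's proof: localize the pointwise drop to a cylinder of measure at least $|k|^{-\xi_2}$ around $x_0\in Z_\infty$, upgrade to a uniform $L^\infty$ contraction at time $O(\log|k|)$, convert this to a $\|\cdot\|_k$ contraction via Corollary~\ref{cor-Rk}(b) and Remark~\ref{rmk-basic}, and sum the Neumann series using $(I-A)^{-1}=(I+A+\dots+A^{m-1})(I-A^m)^{-1}$. The one step you handle differently is the propagation of the drop from the cylinder to all of $Z$. The paper passes through the $L^1$ norm: from $\mu_Z(U)\ge C^{-1}|k|^{-(\xi_1+1)\xi_2}$ it deduces $|u|_1\le 1-C^{-1}|k|^{-\xi_3}$ and then invokes the spectral gap of the untwisted operator $R$ (Proposition~\ref{prop-basic}) via $|R^n|u||_\infty\le \tau^n\|u\|_\theta+|u|_1$, which costs a further $n_1(k)=[\zeta_1\ln|k|]$ iterates before the $L^\infty$ drop materializes. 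You instead use positivity, $R\mathbf{1}=\mathbf{1}$, and the full-branch property to exhibit, for every $z\in Z$, the unique preimage $z_A\in A$ under $G^{m_0}$ with weight $e^{g_{m_0}(z_A)}\ge |k|^{-\xi_2}$, obtaining the uniform $L^\infty$ drop after exactly $m_0$ further steps. Both routes are valid: yours is more elementary in that it avoids Proposition~\ref{prop-basic}, but it leans on the full-branch structure and on the two-sided distortion bound $e^{g(y)}\asymp\mu_Z(a)$ for $y\in a\subset Z_0$ (equation~\eqref{eq-GM} records only the upper bound, but the lower bound is standard for Gibbs--Markov maps and is the same fact underlying the paper's assertion that $g$ is uniformly bounded on $Z_\infty$); the paper's $L^1$ route is marginally more robust since it does not need every point of $Z$ to have a preimage inside $A$. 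The exponents produced are of the same order, so nothing is lost either way.
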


\begin{proof}
Following~\cite[Section~7]{Dolgopyat98b}, we use the pointwise estimate on
iterates
of $R_k(\omega)$ to obtain estimates on the $L^1$, $L^\infty$ and $\|\;\|_k$ norms.

Write $\hat u=R_k(\omega)^jv$ and $u=R_k(\omega)^{\ell(k)}v$
where $\ell(k)=[\zeta\ln |k|]$.  Note that $|\hat u|_\infty\le1$
and $|\hat u|_\theta\le 2C_4|k|^\eps\le 2C_4|k|$.  Hence, $|\hat u(x)|\le 1-1/(2|k|^{\xi_1})$ for all $x$
within distance $1/(4C_4|k|^{\xi_1+1})$ of $x_0$.  Call this subset $U$.
If $\mathcal{C}_m$ is an $m$-cylinder, then $\diam\mathcal{C}_m=\theta^m$,
so provided $\theta^m<1/(4C_4|k|^{\xi_1+1})$, the $m$-cylinder containing
$x_0$ lies inside $U$.  It suffices to take
$m\approx (\xi_1+1)\ln |k|/(-\ln\theta)$.
By~\eqref{eq-GM},
\[
	\mu_Z(U)\ge\mu_Z(\mathcal{C}_m)\ge C_3^{-1} e^{-g_m(x_0)}\ge 
C_3^{-1} e^{-m|1_{Z_\infty}g|_\infty}\ge C^{-1} |k|^{-(\xi_1+1)\xi_2},
\]
where $\xi_2=|1_{Z_\infty}g|_\infty/(-\ln\theta)$.
Breaking up $Z$ into $U$ and $Z\setminus U$,
\[
|u|_1\le |\hat u|_1\le (1-1/(2|k|^{\xi_1}))\mu_Z(U)+1-\mu_Z(U)=1-\mu_Z(U)/(2|k|^{\xi_1})\le 
1-C^{-1}|k|^{-\xi_3},
\]
where $\xi_3=\xi_1+\xi_2+\xi_1\xi_2$.
By Proposition~\ref{prop-basic},
\begin{align*}
|R_k(\omega)^n u|_\infty & \le |R^n|u||_\infty \le 
|R^n|u|-{\SMALL \int}|u|\,d\mu_Z|_\infty+|u|_1 
 \ll \tau^n\|u\|_\theta + |u|_1 \\
& \le (1+2C_4|k|)\tau^n+1-C^{-1}|k|^{-\xi_3}.
\end{align*}
Choosing $n=n_1(k)=[\zeta_1\ln |k|]$ where $\zeta_1\gg 1$ ensures that
\[
|R_k(\omega)^{\ell(k)+n_1(k)}v|_\infty =
|R_k(\omega)^{n_1(k)}u|_\infty \le 1-C^{-1}|k|^{-\xi_3}.
\]
Setting $n_2(k)=[\zeta_2\ln |k|]$ where $\zeta_2=\zeta+\zeta_1$,
\[
|R_k(\omega)^{n_2(k)}v|_\infty \le 1-C^{-1}|k|^{-\xi_3}.
\]
By Proposition~\ref{prop-infty}(a) and Corollary~\ref{cor-Rk}(b), $|R_k(\omega)^{n_2(k)+n}|_\infty \le 
1-C^{-1}|k|^{-\xi_3}$ for all $n\ge0$, and
\[
|R_k(\omega)^{n_2(k)+n}v|_\theta/(2C_4|k|^\eps) \le {\SMALL\frac 12}+\theta^n C_4\le {\SMALL\frac 34},
\]
for $n$ sufficiently large (independent of $k$).  Increasing $\zeta_2$
slightly, we obtain
$\|R_k(\omega)^{n_2(k)}v\|_k \le 1-C^{-1}|k|^{-\xi_3}$.
Hence $\|(I-R_k(\omega)^{n_2(k)})^{-1}\|_k \le C|k|^{\xi_3}$.
Using the identity $(I-A)^{-1}=(I+A+\dots+A^{m-1})(I-A^m)^{-1}$
and Remark~\ref{rmk-basic}, we obtain
\[
\|(I-R_k(\omega))^{-1}\|_k = O(n_2(k)|k|^{\xi_3})=O(|k|^\xi),
\]
for any choice of $\xi>\xi_3$.
\end{proof}

\begin{lemma} \label{lem-approx}
Assume conditions (iii) and (v).
Then there exists $\xi>0$ and $C\ge1$ such that
$\|(I-R_k(\omega))^{-1}\|_\theta\le C|k|^\xi$ for all $k\in\Z^d\setminus\{0\}$ 
and all $\omega\in[0,2\pi]$.
\end{lemma}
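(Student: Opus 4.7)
The plan is to combine Propositions~\ref{prop-approx1} and~\ref{prop-approx2}, with condition~(v) eliminating the exceptional scenario in the hypothesis of Proposition~\ref{prop-approx1}. By~(v), I fix a finite subsystem $Z_\infty \subset Z$ on which no approximate eigenfunctions exist. I then choose $\xi_2, \zeta_0 > 0$ sufficiently large that this nonexistence statement rules out sequences with parameters $\xi \ge \xi_2$ and $\zeta \ge \zeta_0$, and feed $\xi_2$ and $\zeta_0$ into Proposition~\ref{prop-approx1} to produce constants $\xi_1$ and $\zeta > \zeta_0$.

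The central step is to verify, for all sufficiently large $|k|$ and all $\omega \in [0,2\pi]$, the hypothesis of Proposition~\ref{prop-approx2} with parameters $\xi_1$ and (roughly) $3\zeta$ in place of $\zeta$, so as to cover the three indices $0, n(k), 2n(k)$ that appear in Proposition~\ref{prop-approx1}. I argue by contradiction: if the hypothesis failed, there would be sequences $|k_j| \to \infty$, $\omega_j \in [0,2\pi]$, and $v_j \in F_\theta(Z)_{k_j}$ with $|(R_{k_j}(\omega_j)^{i n(k_j)} v_j)(x)| \ge 1 - |k_j|^{-\xi_1}$ for all $i \in \{0,1,2\}$ and all $x \in Z_\infty$, where $n(k_j) = [\zeta \ln |k_j|]$. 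Applying Proposition~\ref{prop-approx1} to each $j$ would then manufacture a sequence of approximate eigenfunctions on $Z_\infty$, contradicting the choice of $Z_\infty$.

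With the hypothesis of Proposition~\ref{prop-approx2} verified, I obtain $\xi > 0$ and $C \ge 1$ such that $\|(I - R_k(\omega))^{-1}\|_k \le C|k|^\xi$ uniformly for $|k| \ge k_0$ and $\omega \in [0,2\pi]$. Comparing the norms via $\|v\|_k \le \|v\|_\theta \le (1 + 2C_4|k|^\eps)\|v\|_k$, passing from the $\|\cdot\|_k$ bound to the $\|\cdot\|_\theta$ bound costs at most a factor of $|k|^\eps$, producing the desired estimate (with $\xi$ increased by $\eps$) for $|k| \ge k_0$. The remaining finite set of small $|k|$ is handled by a compactness argument in $\omega$: for each such fixed $k$, $R_k(\omega)$ is quasicompact by Proposition~\ref{prop-basic}, and $1$ is excluded from its spectrum (otherwise an eigenfunction would yield approximate-eigenfunction data on a suitable subsystem, again contradicting~(v)), so $\|(I - R_k(\omega))^{-1}\|_\theta$ is bounded uniformly on the compact set $\omega \in [0,2\pi]$.

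The main obstacle is quantifier bookkeeping in the central step: the parameters $\xi_2, \zeta_0$ furnished by the nonexistence of approximate eigenfunctions must be large enough that the output $\xi_1, \zeta$ of Proposition~\ref{prop-approx1} fall within the regime where the contradiction actually applies, and the Lipschitz bound $|w|_\theta \le 16 C_4|k|$ in Proposition~\ref{prop-approx1} must be matched against the requirement $|u_j|_\theta \le C|k_j|$ in the definition of approximate eigenfunctions. Once these quantifiers are aligned, the proof is essentially a clean composition of the two preceding propositions.
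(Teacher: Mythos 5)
Your proposal is correct and follows exactly the route the paper intends: the paper's own proof is the single line ``immediate from Propositions~\ref{prop-approx1} and~\ref{prop-approx2}'', and your contradiction argument (negating the hypothesis of Proposition~\ref{prop-approx2} forces the hypothesis of Proposition~\ref{prop-approx1}, which under condition~(v) manufactures forbidden approximate eigenfunctions) together with the $\|\cdot\|_k$ versus $\|\cdot\|_\theta$ comparison is precisely the omitted detail. Your separate treatment of the finitely many small $|k|$ (via quasicompactness and exclusion of $1$ from the spectrum, as in Proposition~\ref{prop-f1} of Appendix~B) is a point the paper's one-line proof glosses over, and your handling of it is sound.
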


\begin{proof}  This is immediate from Propositions~\ref{prop-approx1}
and~\ref{prop-approx2}.
\end{proof}

\section{Renewal operators}
\label{sec-T}

Define the tower 
$\Delta=\{(z,\ell)\in Z\times\Z: 0\le \ell\le \varphi(z)-1\}$.
The tower map $\hat f:\Delta\to\Delta$ 
is given by 
\[
\hat f(z,\ell)=\begin{cases}  (z,\ell+1), & \ell\le \varphi(z)-2 \\ (Gz,0), & \ell = \varphi(z)-1\end{cases},
\]
with ergodic $\hat f$-invariant measure $\mu_\Delta=\mu_Z\times{\rm counting}$.
Let $L:L^1(\Delta)\to L^1(\Delta)$ denote the transfer operator corresponding to
$\hat f:\Delta\to \Delta$.  (So 
$\int_\Delta Lv\,w\,d\mu_\Delta=\int_\Delta v\,w\circ \hat f\,d\mu_\Delta$.)

Denote by $\pi:\Delta\to X$ the projection $\pi(z,\ell)=f^\ell z$.

\begin{rmk} \label{rmk-mu}
	Since $\pi$ is a semiconjugacy from $\hat f$ to $f$, the measure $\mu=\pi_*\mu_\Delta$ is an ergodic $f$-invariant measure on $X$.   This is the measure described in Section~\ref{sec-GM}.
\end{rmk}

Given a cocycle $h:X\to\T^d$, we define
the lifted cocycle $\hat h=h\circ\pi:\Delta\to\T^d$.
For $k\in\Z^d$, define the twisted transfer operators $L_k:L^1(\Delta)\to L^1(\Delta)$, 
$L_kv=L(e^{ik\cdot \hat h}v)$.

Next, define the renewal operators $T_{k,n}:L^1(Z)\to L^1(Z)$
given by $T_{k,0}=I$ and for $n\ge1$,
\[
T_{k,n}v=1_ZL_k^n(1_Zv)= 1_ZL^n(1_Ze^{ik\cdot \hat h_n}v).
\]
Define $T_k(\omega):L^1(Z)\to L^1(Z)$ for $\omega\in[0,2\pi]$,
\[
T_k(\omega)=\sum_{n=0}^\infty T_{k,n}e^{in\omega}.
\]

Note that $G=\hat f^\varphi:Z\to Z$ is the first return to $Z$ for the map
$\hat f:\Delta\to\Delta$.
Hence for all $k\in\Z^d$ we have the renewal equation,
\[
T_k(\omega)=(I-R_k(\omega))^{-1}.
\]
Let $\hat T_{k,n}$ denote Fourier coefficients of $T_k(\omega)$.

Since the expression $S_{v,w}(n)$ in Theorem~\ref{thm-S1} is a sum over $k\in\Z^d\setminus\{0\}$, we restrict attention throughout to this range of $k$.
(The operators $T_{0,n}$ and $T_0(\omega)$ were studied in~\cite{Gouezel04a,Sarig02,MT12}.)

\begin{prop}  \label{prop-fourier}
Assume condition (iii) and nonexistence of eigenfunctions.  Then
$T_{k,n}=\hat T_{k,n}$ for all $k\in\Z^d\setminus\{0\}$, $n\ge0$.
\end{prop}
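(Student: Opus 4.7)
The plan is to realize $T_k(\omega)$ as the boundary value of an operator-valued function holomorphic on the open unit disc $\D$ and continuous on its closure, then identify $T_{k,n}$ simultaneously as a Taylor coefficient at $0$ and as a Fourier coefficient on $\partial\D$. Concretely, set $R_k[z]:=\sum_{m\ge1}z^m R_{k,m}$, so that $R_k[e^{i\omega}]=R_k(\omega)$. Corollary~\ref{cor-Rk}(a) together with $\varphi^\eps\in L^1(Z)$ gives $\sum_m\|R_{k,m}\|_\theta<\infty$, so this series converges uniformly on $\overline{\D}$ and defines a continuous map $\overline{\D}\to L(F_\theta(Z))$ that is holomorphic on $\D$.

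The key step is to verify $1\notin\spec(R_k[z])$ for every $z\in\overline{\D}$. On the boundary $|z|=1$, any $v\in F_\theta(Z)$ with $R_k(\omega)v=v$ satisfies $|v|\le R|v|$; combined with $\int(R|v|-|v|)\,d\mu_Z=0$ and the Perron--Frobenius property of $R$, this forces $|v|$ constant, after which pointwise equality in $|R_k(\omega)v|=R|v|$ yields $v\circ G=e^{ik\cdot H}e^{i\omega\varphi}v$---an eigenfunction in the sense of Definition~\ref{def-ef}, contradicting the hypothesis. For $|z|=\rho<1$ the extra damping $\rho^\varphi<1$ (since $\varphi\ge1$) gives a strict inequality: $|v|\le R(\rho^\varphi|v|)$ integrates to $\int|v|\,d\mu_Z\le\int\rho^\varphi|v|\,d\mu_Z<\int|v|\,d\mu_Z$, forcing $v\equiv0$. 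A Lasota--Yorke estimate extending Corollary~\ref{cor-Rk}(b) (only made easier by $|z^\varphi|\le1$) gives quasicompactness of $R_k[z]$ on $F_\theta(Z)$ with essential spectral radius $<1$, so absence of the eigenvalue $1$ upgrades to $1\notin\spec(R_k[z])$.

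Consequently $T_k[z]:=(I-R_k[z])^{-1}$ is continuous on $\overline{\D}$ and holomorphic on $\D$. For $|z|$ sufficiently small the Neumann series converges, and collecting terms by total return time via the first-return renewal identity $T_{k,n}=\sum_{m=1}^n R_{k,m}T_{k,n-m}$ with $T_{k,0}=I$---a direct consequence of the tower decomposition $T_{k,n}v=1_ZL^n(1_Ze^{ik\cdot\hat h_n}v)$ split by the first return to $Z$---yields $T_k[z]=\sum_{n\ge0}T_{k,n}z^n$. By the identity theorem, the Taylor expansion at $0$ converges throughout $\D$ and equals $T_k[z]$ there. Finally, operator-valued Hardy-space theory (or a direct Cauchy computation: $\frac{1}{2\pi}\int_0^{2\pi}T_k[\rho e^{i\omega}]e^{-in\omega}\,d\omega=\rho^n T_{k,n}$ for $\rho<1$, then passing $\rho\to1^-$ using continuity of $T_k[\cdot]$ on $\overline{\D}$) identifies the Taylor coefficients at $0$ with the Fourier coefficients of the boundary values, giving $T_{k,n}=\hat T_{k,n}$ for all $n\ge0$.

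The main obstacle is the interior spectral argument: verifying quasicompactness of the whole family $\{R_k[z]\}_{z\in\overline{\D}}$ on $F_\theta(Z)$ so that absence of the eigenvalue $1$ translates into $1\notin\spec(R_k[z])$. The strict inequality from $\rho^\varphi<1$ handles the eigenvalue half effortlessly; the Lasota--Yorke upgrade is routine but must be checked uniformly in $z$, and this is what allows the holomorphic extension across the interior of the disc.
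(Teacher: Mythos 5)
Your proof is correct and follows the same overall architecture as the paper's Appendix~\ref{app-B}: extend $R_k$ holomorphically to the disc, use the renewal equation to write $T_k(z)=(I-R_k(z))^{-1}$, and identify the Taylor coefficients at $0$ with the Fourier coefficients of the boundary values by letting $\rho\to1^-$. The one genuine difference is the treatment of the interior of the disc. The paper never establishes invertibility of $I-R_k(z)$ for $|z|$ small: it gets analyticity of $\sum_n T_{k,n}z^n$ on $\D$ for free from the trivial bound $|T_{k,n}|_1\le1$ (viewing the $T_{k,n}$ as operators into $L^1$), proves invertibility only on the unit circle using the no-eigenfunction hypothesis (Proposition~\ref{prop-f1}), and then extends to an annulus $\rho_0\le|z|\le1$ via the perturbation estimate $\|R_k(e^{i\omega})-R_k(\rho e^{i\omega})\|_\theta\ll(1-\rho)^\eps$ (Proposition~\ref{prop-f2}, Corollary~\ref{cor-f}), which supplies the uniform domination needed to pass $\rho\to1^-$. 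You instead prove $1\notin\spec R_k(z)$ on all of $\overline{\D}$, exploiting the strict damping $\rho^\varphi<1$ in the interior; continuity of $z\mapsto(I-R_k(z))^{-1}$ on the compact set $\overline{\D}$ then gives the uniform bound and the limit for free, and the perturbation lemma is not needed. The cost is that the Lasota--Yorke/quasicompactness argument must be run for all $z\in\overline{\D}$ rather than only on the circle, but as you note this is verbatim the same estimate since only $|z^{\varphi_n}|\le1$ is used. Your boundary eigenvalue argument (positivity of $R$ forcing $|v|$ constant, then equality in the convex combination forcing $v\circ G=e^{ik\cdot H}e^{i\omega\varphi}v$) is the standard unwinding of the paper's one-line appeal to the adjoint relation between $R_k(e^{i\omega})$ and $M_{k,\omega}$, so that step is equivalent.
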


For $\beta>1$, this follows from Remark~\ref{rmk-fourier}
using the estimate 
$\hat T_{k,n}=O(n^{-(\beta-\eps)})$,
 and the assumption that there are no eigenfunctions is not required.
(The case $\beta>2$ was treated similarly 
in~\cite{BHM05}.)
The proof of Proposition~\ref{prop-fourier} for general $\beta>0$ is postponed to Appendix~\ref{app-B}.

\begin{lemma} \label{lem-Tk}
Assume conditions (iii) and (v).  
Choose $\eps$ and $\theta$ as in Corollary~\ref{cor-RkHolder}.
Then there are constants $C\ge1$, $\xi>0$, such that
\[
\|T_{k,n}\|\le C|k|^\xi n^{-(\beta-\eps)},
\]
for all $k\in\Z^d\setminus\{0\}$, $n\ge1$.
\end{lemma}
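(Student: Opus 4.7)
The plan is to read off the estimate from the Fourier series of the operator-valued function
\[
	T_k(\omega)=(I-R_k(\omega))^{-1}\in L(F_\theta(Z),F_\theta(Z)),\qquad \omega\in[0,2\pi].
\]
Under condition~(v), Proposition~\ref{prop-fourier} gives the identification $T_{k,n}=\hat T_{k,n}$, so by Proposition~\ref{prop-coeff} applied with $q=\beta-\eps$ (non-integer by the hypothesis on $\eps$),
\[
	\|T_{k,n}\|_\theta \le D_{\beta-\eps}\,\|T_k\|_{C^{\beta-\eps}}\,n^{-(\beta-\eps)}.
\]
Hence the lemma reduces to a bound of the form $\|T_k\|_{C^{\beta-\eps}}\le C|k|^{\xi'}$ for some $\xi'>0$.

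To produce such a bound, I set $A(\omega)=I-R_k(\omega)$ and $B(\omega)=T_k(\omega)=A(\omega)^{-1}$, the invertibility being guaranteed by Lemma~\ref{lem-approx}, and apply the functional-analytic Lemma~\ref{lem-inverse}. The two required inputs are already in place. First, Corollary~\ref{cor-RkHolder} gives $\|A\|_{C^{\beta-\eps}}\le 1+C|k|^\eps$ directly, since the constant part $I$ has trivial derivatives and contributes only to the $C^0$ norm. Second, the Dolgopyat-type estimate Lemma~\ref{lem-approx}, which invokes both conditions~(iii) and~(v), yields the uniform resolvent bound
\[
	\|B\|_{C^0}=\sup_{\omega\in[0,2\pi]}\|(I-R_k(\omega))^{-1}\|_\theta\le C|k|^\xi.
\]
Substituting into Lemma~\ref{lem-inverse} with $m=\lfloor\beta-\eps\rfloor$ gives
\[
	\|B\|_{C^{\beta-\eps}}\le c_m(1+C|k|^\xi)^{2(\beta-\eps)+2}(1+C|k|^\eps)^{2(\beta-\eps)+1}\le C'|k|^{\xi'},
\]
for a new exponent $\xi'>0$ depending only on $\beta$, $\eps$ and the original $\xi$.

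Combining the two displayed estimates and relabelling $\xi'$ as $\xi$ completes the argument. I do not expect a serious obstacle here: the substantive content, namely the polynomial control on the resolvent in the presence of possible spectral accumulation on the unit circle, has already been handled in Section~\ref{sec-Dolg} via the Dolgopyat argument, and the transfer of Hölder regularity from $A$ to $A^{-1}=B$ is purely algebraic and encapsulated in Lemma~\ref{lem-inverse}. The only subtle point is verifying the hypotheses of Proposition~\ref{prop-fourier} (non-existence of ordinary eigenfunctions) from condition~(v), but this is the business of Appendix~\ref{app-B} and does not interfere with the main estimate.
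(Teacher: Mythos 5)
Your proposal is correct and follows essentially the same route as the paper's proof: identify $T_{k,n}$ with the Fourier coefficients via Proposition~\ref{prop-fourier}, bound them by $\|T_k\|_{C^{\beta-\eps}}n^{-(\beta-\eps)}$ using Proposition~\ref{prop-coeff}, and control $\|T_k\|_{C^{\beta-\eps}}$ by combining Lemma~\ref{lem-inverse} with Corollary~\ref{cor-RkHolder} and the Dolgopyat-type resolvent bound of Lemma~\ref{lem-approx}. The point you flag about Proposition~\ref{prop-fourier} is indeed harmless: under condition~(v) one can quote Lemma~\ref{lem-approx} directly in place of the eigenfunction hypothesis, as remarked in Appendix~\ref{app-B}.
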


\begin{proof}  
By Corollary~\ref{cor-RkHolder}, $\omega\mapsto R_k(\omega)$ is 
$C^{\beta-\eps}$.
By Lemma~\ref{lem-approx}, $I-R_k(\omega)$ is invertible and so
$\omega\mapsto T_k(\omega)=(I-R_k(\omega))^{-1}$ is $C^{\beta-\eps}$.
Hence by Propositions~\ref{prop-coeff} and~\ref{prop-fourier}, 
\[
\|T_{k,n}\|= \|\hat T_{k,n}\|\ll \|T_k\|_{C^{\beta-\eps}}n^{-(\beta-\eps)}.
\]

By Lemma~\ref{lem-inverse} and Corollary~\ref{cor-RkHolder},
\begin{align*}
\|T_k\|_{C^{\beta-\eps}} & \ll  
\|R_k\|_{C^{\beta-\eps}}^{2\beta+1}\|(I-R_k)^{-1}\|_{C^0}^{2\beta+2}
 \ll |k|^{\eps(2\beta+1)}
\sup_{\omega\in[0,2\pi]}\|(I-R_k(\omega))^{-1}\|^{2\beta+2}.
\end{align*}
Hence by Lemma~\ref{lem-approx}, $\|T_k\|_{C^{\beta-\eps}} \ll |k|^\xi$.
The result follows.
\end{proof}

\section{Proof of Theorem~\ref{thm-S1}}
\label{sec-S1}

In this section, we assume conditions (iii)--(v).
Let $f:X\to X$ 
with induced Gibbs-Markov map $G=f^\varphi:Z\to Z$ as in Section~\ref{sec-GM}.  
Let $\mu_Z$ denote the associated 
ergodic $G$-invariant measure on $Z$.

Let $\hat f:\Delta\to\Delta$ be the tower map defined in Section~\ref{sec-ef}
with
ergodic $\hat f$-invariant measure 
$\mu_\Delta=\mu_Z\times{\rm counting}$.
We continue to let $\pi:\Delta\to X$ denote the semiconjugacy 
$\pi(z,\ell)=f^\ell z$ from $\hat f$ to $f$.  
Recall that $\pi_*\mu_\Delta=\mu$ is the underlying ergodic $f$-invariant measure on $X$.
Given a cocycle $h:X\to\T^d$, we define
the lifted cocycle $\hat h=h\circ\pi:\Delta\to\T^d$.

Fix $\eps\in(0,\beta)$ sufficiently small (to be specified) and $\theta\in[\lambda^{-\eta\eps},1)$.
The symbolic metric $d_\theta$ on $Z$ defined in Section~\ref{sec-GM} extends to a metric on $\Delta$ by defining $d_\theta((z,\ell),(z',\ell'))=\begin{cases} d_\theta(z,z'), & \ell=\ell' \\ 1 & \ell\neq\ell'\end{cases}$.
An observable $v:\Delta\to\R$ is Lipschitz if $\|v\|_\theta=|v|_\infty+|v|_\theta<\infty$ where $|v|_\theta=\sup_{p\neq q}|v(p)-v(q)|/d_\theta(p,q)<\infty$.
Let $F_\theta(\Delta)$ denote the space of Lipschitz observables on $\Delta$.

\begin{prop} \label{prop-hatv}
If $v\in C^\eta(X)$, then $\hat v=v\circ \pi\in F_\theta(\Delta)$.
Moreover, there is a constant $C\ge1$ such that
$\|\hat v\|_\theta\le C\|v\|_{C^\eta}$.
\end{prop}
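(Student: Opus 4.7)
The plan is to bound $|\hat v|_\infty$ trivially and then control the Lipschitz seminorm by case analysis on pairs of points in $\Delta$. The supremum bound is immediate: since $\pi$ maps $\Delta$ onto $X$, $|\hat v|_\infty=|v|_\infty\le\|v\|_{C^\eta}$.

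For $|\hat v|_\theta$, I take two points $(z,\ell),(z',\ell')\in\Delta$. If $\ell\neq\ell'$, then by the definition of $d_\theta$ on $\Delta$ one has $d_\theta((z,\ell),(z',\ell'))=1$, so $|\hat v(z,\ell)-\hat v(z',\ell')|\le2|v|_\infty$, which is dominated by $2\|v\|_{C^\eta}d_\theta((z,\ell),(z',\ell'))$. If $\ell=\ell'$ and $s(z,z')=0$, the same trivial bound applies. The substantive case is $\ell=\ell'$ with $s(z,z')\ge1$, so $z,z'$ belong to a common partition element $a\in\alpha$; in particular $\varphi(z)=\varphi(z')=\varphi(a)$ and $\ell<\varphi(a)$, which lets me apply the Gibbs-Markov structure.

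In that case, property~(3) of Section~\ref{sec-GM} gives $d(f^\ell z,f^\ell z')\le C_1 d(Gz,Gz')$, and Proposition~\ref{prop-d} gives $d(Gz,Gz')^\eta\le d_\theta(Gz,Gz')=\theta^{s(z,z')-1}=\theta^{-1}d_\theta(z,z')$. Combining,
\begin{equation*}
|\hat v(z,\ell)-\hat v(z',\ell)|\le|v|_{C^\eta}d(f^\ell z,f^\ell z')^\eta\le C_1^\eta\theta^{-1}|v|_{C^\eta}d_\theta((z,\ell),(z',\ell)).
\end{equation*}
Taking $C=\max\{2,C_1^\eta\theta^{-1}\}+1$ then yields $\|\hat v\|_\theta\le C\|v\|_{C^\eta}$.

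The estimate is essentially mechanical; the one place that requires care is confirming that $s(z,z')\ge1$ forces $z,z'$ to lie in a common $a\in\alpha$ so that $\varphi$ is constant and property~(3) is legally applicable to the level-$\ell$ points. After that, no obstacles remain, and the two factors $C_1^\eta$ (from~(3)) and $\theta^{-1}$ (from the shift in separation time under $G$) account for the entire constant.
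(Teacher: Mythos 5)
Your proof is correct and follows essentially the same route as the paper: trivial sup bound, the $\ell\neq\ell'$ case via $d_\theta=1$, and the $\ell=\ell'$ case via condition~(3) and Proposition~\ref{prop-d}, picking up the same factor $C_1^\eta\theta^{-1}$. Your extra subcase $s(z,z')=0$ (which the paper's write-up passes over silently) is a legitimate refinement, since condition~(3) and the identity $d_\theta(Gz,Gz')=\theta^{-1}d_\theta(z,z')$ are only available when $z,z'$ share a partition element.
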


\begin{proof}
Clearly, $|\hat v|_\infty\le |v|_\infty$.
Let  $q=(z,\ell)$, $q'=(z',\ell')\in \Delta$.
If $\ell\neq\ell'$, we have $|\hat v(q)-\hat v(q')|\le 2|v|_\infty=2|v|_\infty d_\theta(q,q')$.
If $\ell=\ell'$, then setting $C_1'=C_1^\eta$, and using condition~(4) in the definition of nonuniformly expanding map and Proposition~\ref{prop-d},
\begin{align*}
|\hat v(q)-\hat v(q')|=|v(f^\ell z)-v(f^\ell z')
& \le |v|_{C^\eta} d(f^\ell z,f^\ell z')^\eta
\le |v|_{C^\eta} C_1' d(Gz,Gz')^\eta
\\ & \le |v|_{C^\eta} C_1' d_\theta(Gz,Gz')
= |v|_{C^\eta} C_1' \theta^{-1} d_\theta(z,z').
\end{align*}
Hence $|\hat v|_\theta\ll \|v\|_{C^\eta}$.
\end{proof}

In Theorem~\ref{thm-S1}, we are interested in observables $v:X\to\R$ supported in $Y$.  These lift to observables $\hat v:\Delta\to\R$ supported in $\hat Y=\pi^{-1}(Y)$.
Proposition~\ref{prop-hatv} guarantees that if $v\in C^\eta(Y)$, then
$\hat v\in F_\theta(\hat Y)$.


\begin{prop} \label{prop-hatY}
	Let $a\in\alpha$, $0\le \ell<\varphi(a)$.
	If $(a\times\{\ell\})\cap\hat Y\neq\emptyset$, then
	$a\times\{\ell\}\subset\hat Y$.
\end{prop}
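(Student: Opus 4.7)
The plan is to exploit condition~(iv), which was designed precisely to guarantee that $\hat Y = \pi^{-1}(Y)$ is a union of rectangles of the form $a \times \{\ell\}$.

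First I would identify, for each $z \in Z$ sitting in a partition element $a \in \alpha$, the set of times $\ell \in \{0, 1, \ldots, \varphi(z)-1\}$ at which $f^\ell z \in Y$. Since $z \in Z \subset Y$ and $\tau$ is the first return time of $f$ to $Y$, these times are precisely $T_j(z) = \sum_{i=0}^{j-1} \tau(F^i z)$ for $j = 0, 1, \ldots, \rho(z)-1$. The next return $T_{\rho(z)}(z) = \varphi(z)$ falls just outside the range by virtue of the identity $G(z) = f^{\varphi(z)}z = F^{\rho(z)}z$ from condition~(iv), which forces $\varphi(z) = \sum_{i=0}^{\rho(z)-1}\tau(F^i z)$.

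Next I would invoke the two constancy statements in condition~(iv): $\rho$ is constant on $a$, and $\tau \circ F^i$ is constant on $a$ for every $i < \rho(a)$. These immediately imply that each $T_j$ ($0 \le j \le \rho(a)-1$) is constant on $a$, so the finite set $\mathcal{T}_a = \{T_0, T_1, \ldots, T_{\rho(a)-1}\} \subset \{0, \ldots, \varphi(a)-1\}$ does not depend on the choice of $z \in a$.

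The conclusion is then immediate. If $(a \times \{\ell\}) \cap \hat Y \neq \emptyset$, then $f^\ell z \in Y$ for some $z \in a$, whence $\ell \in \mathcal{T}_a$; the previous step then gives $f^\ell z' \in Y$ for every $z' \in a$, i.e.\ $a \times \{\ell\} \subset \hat Y$. I don't anticipate a genuine obstacle here --- the proposition is essentially a bookkeeping consequence of condition~(iv), whose role is precisely to ensure this compatibility between $\hat Y$ and the partition rectangles of the tower.
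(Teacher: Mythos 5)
Your argument is correct and coincides with the paper's own proof: both identify the visit times to $Y$ as the Birkhoff sums $\tau_j=\sum_{i<j}\tau\circ F^i$ for $j<\rho$, and both use the constancy of $\rho$ and of $\tau\circ F^i$ on $a$ from condition~(iv) to conclude that these times are independent of the point in $a$. The only difference is notational ($T_j$ versus $\tau_q$).
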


\begin{proof}
	Suppose there exists $z_0\in a$ such that $(z_0,\ell)\in\hat Y$.
	Then there exists $q\ge1$ such that $\tau_q(z_0)=\ell$.
	Note that $\tau_q=\ell<\varphi=\tau_\rho$, so $q<\rho$ and $\tau_q$ is constant on $a$ by condition (iv).
	Hence $\tau_q(z)=\ell$ for all $z\in a$, and
	it follows that $a\times\{\ell\}\subset\hat Y$.
\end{proof}

The tower $\Delta$ can be partitioned into levels $\{\Delta_n;\,n\ge0\}$ and
diagonals $\{D_n;\,n\ge1\}$ where
\begin{align*}
	\Delta_n=\{(z,n)\in Z\times\{n\}:\varphi(z)>n\}, \quad
	D_n=\{(z,\varphi(z)-n)\in Z\times\Z:\varphi(z)>n\}.
\end{align*}
Note that $\mu_\Delta(\Delta_n)=\mu_\Delta(D_n)= \mu_Z(\varphi>n)$.
We have the corresponding partitions $\hat Y\cap \Delta_n$ and
$\hat Y\cap D_n$ of $\hat Y$.

\begin{prop} \label{prop-hatY2}  
$\sum_{j\ge n}\mu_\Delta(\hat Y\cap\Delta_j)=O(n^{-(\beta-\eps)})$, 
$\sum_{j\ge n}\mu_\Delta(\hat Y\cap D_j)=O(n^{-(\beta-\eps)})$.
\end{prop}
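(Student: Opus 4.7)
The plan is to convert both sums into counting expressions on $Z$ over the visits of $f$-orbits to $Y$ between successive returns to $Z$, then combine the polynomial tail of $\tau$ (condition~(i)) with the exponential tail of $\rho$ (condition~(iv)). Write $\tau_q=\sum_{j=0}^{q-1}\tau\circ F^j$, so that $\tau_\rho=\varphi$. Using Proposition~\ref{prop-hatY} together with condition~(iv), I would first observe that for $z\in Z$ with $\varphi(z)>n$, the iterate $f^nz$ lies in $Y$ precisely when $n=\tau_q(z)$ for some $q\in[0,\rho(z))$. By Fubini this yields
\begin{align*}
\sum_{n\ge N}\mu_\Delta(\hat Y\cap\Delta_n)&=\sum_{q\ge1}\mu_Z(q<\rho,\,\tau_q\ge N), \\
\sum_{n\ge N}\mu_\Delta(\hat Y\cap D_n)&=\sum_{q\ge0}\mu_Z(q<\rho,\,\varphi-\tau_q\ge N).
\end{align*}

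The next ingredient is the uniform-in-$j$ tail bound $\mu_Z(\tau\circ F^j\ge M)\ll M^{-(\beta-\eps)}$, which follows from $F$-invariance of $\mu|_Y$ (standard for first-return maps), the density bound $d\mu_Z/d\mu|_Z\ll1$, and condition~(i) after absorbing slowly varying factors into an arbitrarily small loss~$\eps$.

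For the $\Delta_n$ sum I would split the $q$-sum at $K\sim(\beta/c)\log N$, with $c$ as in condition~(iv). For $q\le K$, a union bound gives
$\mu_Z(\tau_q\ge N)\le\sum_{j<q}\mu_Z(\tau\circ F^j\ge N/q)\ll q(N/q)^{-(\beta-\eps)}$,
and summing over $q\in[1,K]$ yields $O(K^{\beta+2}N^{-(\beta-\eps)})$, absorbed into $O(N^{-(\beta-2\eps)})$ by the polylogarithmic factor. For $q>K$, the term is dominated by $\mu_Z(\rho>q)$, whose tail sum is $O(e^{-cK})=O(N^{-\beta})$ provided $K$ is chosen large enough.

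The harder step is the $D_n$ sum, since $\varphi-\tau_q=\sum_{j=q}^{\rho-1}\tau\circ F^j$ is a tail sum of random length~$\rho-q$. I would use a double truncation: for $q\le K$, decompose according to $\{\rho\le q+K'\}$ versus $\{\rho>q+K'\}$ with $K'\sim(\beta/c)\log N$. On $\{\rho\le q+K'\}$ the sum has at most $K'$ terms, so a union bound on the summands gives $\mu_Z(\varphi-\tau_q\ge N,\,\rho\le q+K')\ll K'^{\beta+1}N^{-(\beta-\eps)}$, uniform in $q$; summing over $q\le K$ yields $O(K\cdot K'^{\beta+1}N^{-(\beta-\eps)})$, again a polylog inflation. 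The complementary tail $\sum_{q\ge 0}\mu_Z(\rho>q+K')$ and the large-$q$ contribution $\sum_{q>K}\mu_Z(\rho>q)$ are both $O(e^{-cK'})=O(N^{-\beta})$ by the choice of $K'$. The main obstacle is precisely this random summation length in the $D_n$ case; the exponential tail of $\rho$ permits truncation at logarithmic cost, which reduces the problem to the same polynomial-tail union bound that handled $\Delta_n$.
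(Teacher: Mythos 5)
Your proposal is correct, but it takes a genuinely different and more laborious route than the paper. The paper's proof rests on one containment that removes all the combinatorics: both $\bigcup_{j\ge n}\hat Y\cap\Delta_j$ and $\bigcup_{j\ge n}\hat Y\cap D_j$ sit inside the single set $\{(z,\ell)\in\hat Y:\varphi(z)>n\}$ (since $\ell\ge n$ forces $\varphi(z)>n$ in the first case, and $\varphi(z)-\ell\ge n$ does so in the second), and the fibre count $\#\{\ell:(z,\ell)\in\hat Y\}$ is exactly $\rho(z)$. Thus both tail sums are bounded by $\sum_q q\,\mu_Z(\varphi>n,\rho=q)$, which is estimated by splitting at $q=k\sim p\log n$, using condition~(iii) for $\mu_Z(\varphi>n)$ on $\{\rho\le k\}$ and the exponential tail of $\rho$ beyond. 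In particular the $D_n$ case is no harder than the $\Delta_n$ case, whereas in your decomposition by return index $q$ the asymmetry is real and forces the double truncation. Your argument also routes through condition~(i) rather than condition~(iii): you need the uniform tail bound $\mu_Z(\tau\circ F^j\ge M)\ll M^{-(\beta-\eps)}$ (which does follow, as you say, from $F$-invariance of $\mu|_Y$ and the bounded density of $\mu_Z$ with respect to $\mu|_Z$), followed by union bounds over the partial sums $\tau_q$. This is all sound, and the polylogarithmic losses are absorbed into $\eps$ exactly as in the paper's own proof, but it re-derives information that condition~(iii) hands you for free. The one thing your version buys is finer information about which visit $q$ within a $Z$-block contributes to each level or diagonal; for the stated tail estimate this extra resolution is not needed.
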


\begin{proof}
The proof of these estimates is based on~\cite{BruinTerhesiu18}.

First notice that both
$\bigcup_{j\ge n}\hat Y\cap\Delta_j$ and
$\bigcup_{j\ge n}\hat Y \cap D_j$ are contained in
$\{(z,\ell)\in \hat Y:\varphi(z)>n\}$, so it suffices to show that
$\mu_\Delta\{(z,\ell)\in \hat Y:\varphi(z)>n\}=O(n^{-(\beta-\eps)})$.

Next, we write
\[
\SMALL \{(z,\ell)\in \hat Y:\varphi(z)>n\}=\bigcup_{q=1}^\infty
\{(z,\ell)\in \hat Y:\varphi(z)>n,\,\rho(z)=q\}.
\]
If $\rho(z)=q$, then $\varphi(z)=\tau_q(z)$ and so there
are precisely $q$ values of $\ell\in\{0,1,\ldots,\varphi(z)-1\}$ such that $(z,\ell)\in\hat Y$.
Hence
\begin{align*}
\mu_\Delta(\{(z,\ell)\in \hat Y:\varphi(z)>n\}) 
& =\sum_{q=1}^\infty
\mu_\Delta(\{(z,\ell)\in \hat Y:\varphi(z)>n,\,\rho(z)=q\}) \\
& \le \sum_{q=1}^\infty q
\mu_Z(\{z\in Z:\varphi(z)>n,\,\rho(z)=q\}).
\end{align*}
For $k\ge1$,
\begin{align*}
\sum_{q=1}^\infty q \mu_Z(\varphi>n,\,\rho=q) 
& =
\sum_{q=1}^k q \mu_Z(\varphi>n,\,\rho=q) 
+\sum_{q=k+1}^\infty q \mu_Z(\varphi>n,\,\rho=q)  \\
& \le k^2\mu_Z(\varphi>n)+
\sum_{q=k+1}^\infty q \mu_Z(\rho=q) 
\\ & \ll k^2n^{-(\beta-\eps/2)}+ \sum_{q=k+1}^\infty q e^{-cq}
\ll k^2n^{-(\beta-\eps/2)}+ e^{-ck/2},
\end{align*}
where the implied constant is independent of $k$.
Choosing $k=p\log n$ with $p$ sufficiently large, we obtain the desired estimate.
\end{proof}

Recall from Section~\ref{sec-T}
that $L_k:L^1(\Delta)\to L^1(\Delta)$ is the family of twisted transfer operators $L_kv=L(e^{ik\cdot \hat h}v)$ where $\hat h=h\circ\pi$ and $L$ is the transfer operator corresponding to $\hat f$.
From now on, with an obvious abuse of notation, we write $1_{\hat Y}L_k^n1_{\hat Y}$ as a shorthand for
$v\mapsto 1_{\hat Y}L_k^n(1_{\hat Y}v)$.
We view these as operators $1_{\hat Y} L_k^n1_{\hat Y}:F_\theta(\hat Y)\to L^1(\hat Y)$.

Following Gou\"ezel~\cite{GouezelPhD,Gouezel05} (see also~\cite{BHM05}), we define the sequences of operators
\[
A_{k,n}:L^\infty(Z)\to L^1(\Delta), \quad
B_{k,n}:F_\theta(\Delta)\to F_\theta(Z), \quad
E_{k,n}:L^\infty(\Delta)\to L^1(\Delta),
\]
as follows:
\begin{align*}
	(A_{k,n}v)(x) =\!\!\!\!\! \!\!\!\! \sum_{\substack{ f^nz=x \\ z\in Z;\;\hat fz\not\in Z,\ldots,\hat f^nz\not\in Z }}\!\!\!\!\!\!\!\!  & e^{g_n(z)}e^{ik\cdot \hat h_n(z)}v(z), \quad
 (B_{k,n}\hat v)(z) =\!\!\!\!\!\!\! \!\!\!\!\sum_{\substack{ \hat f^nu=z \\ u\not\in Z,\ldots,\hat f^{n-1}u\not\in Z;\;
 \hat f^nu\in Z }}\!\!\!\!\!\!\!\!\!\! e^{g_n(u)}e^{ik\cdot \hat h_n(u)}\hat v(u), \\[.75ex]
  & (E_{k,n}\hat v)(x)  =\!\!\!\! \!\!\!\!\sum_{\substack{ \hat f^nu=x \\ u\not\in Z,\ldots,\hat f^nu\not\in Z }}
 \!\!\!\!\!\!\!\! e^{g_n(u)}e^{ik\cdot \hat h_n(u)}\hat v(u).
\end{align*}
As in~\cite{GouezelPhD,Gouezel05,BHM05},
\begin{align} \label{eq-G}
L_k^n=\sum_{n_1+n_2+n_3=n} A_{k,n_1} T_{k,n_2} B_{k,n_3} +E_{k,n},
\end{align}
and so
\begin{align} \label{eq-G2}
1_{\hat Y}L_k^n1_{\hat Y}=\sum_{n_1+n_2+n_3=n} (1_{\hat Y}A_{k,n_1})\ T_{k,n_2} \ (B_{k,n_3}1_{\hat Y}) \ + \ 1_{\hat Y}E_{k,n}1_{\hat Y},
\end{align}
where 
\[
1_{\hat Y}A_{k,n}:L^\infty(Z)\to L^1(\hat Y),\quad
B_{k,n}1_{\hat Y}:F_\theta(\hat Y)\to F_\theta(Z),\quad
1_{\hat Y}E_{k,n}1_{\hat Y}:L^\infty(\hat Y)\to L^1(\hat Y).
\]

\begin{prop}  \label{prop-ABE}
Uniformly in $k\in\Z^d$, $n\ge1$,
\begin{itemize}
	\item[(a)] $\sum_{j\ge n}\|1_{\hat Y}A_{k,j}\|_{L^\infty(Z)\mapsto L^1(\hat Y)}=O(n^{-(\beta-\eps)})$.
	\item[(b)] $\|1_{\hat Y}E_{k,n}1_{\hat Y}\|_{L^\infty(\hat Y)\mapsto L^1(\hat Y)}=O(n^{-(\beta-\eps)})$.
	\item[(c)] $\sum_{j\ge n}\|B_{k,j}1_{\hat Y}\|_{F_\theta(\hat Y)\mapsto F_\theta(Z)}=O(|k|^\eps n^{-(\beta-\eps)})$.
\end{itemize}
\end{prop}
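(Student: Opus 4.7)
My plan is to exploit the explicit geometric structure of $A_{k,n}$, $B_{k,n}$ and $E_{k,n}$ coming from the tower: in each case only one inverse branch contributes to the sum, so the operators collapse to essentially pointwise multiplication by a phase. Parts (a) and (b) will then follow from the tail bounds in Proposition~\ref{prop-hatY2}, while (c) requires a Lemma~\ref{lem-Rk}-type argument for the Lipschitz seminorm.

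For (a) and (b), I would observe: if $z\in Z$ and $\hat f^jz\notin Z$ for $j=1,\dots,n$, then $\varphi(z)>n$ and $\hat f^jz=(z,j)$, so the orbit lies in the invertible part of the tower and $g_n(z)=0$. The sum defining $(A_{k,n}v)(x)$ collapses to the single term $e^{ik\cdot\hat h_n(z)}v(z)$, supported on $\Delta_n$. Hence $\|1_{\hat Y}A_{k,n}v\|_{L^1(\hat Y)}\le |v|_\infty\mu_\Delta(\hat Y\cap\Delta_n)$, and (a) follows by summing over $j\ge n$ and applying Proposition~\ref{prop-hatY2}. The same bookkeeping applied to $E_{k,n}$ produces support in $\bigcup_{j>n}\Delta_j$ with pointwise bound $|\hat v|_\infty$, so (b) is immediate from the tail estimate.

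For (c), I first identify that an orbit $u,\hat fu,\dots,\hat f^{n-1}u$ which avoids $Z$ and lands in $Z$ at step $n$ originates from $u_a=(z_a(z),\varphi(a)-n)\in D_n$ with $\varphi(a)>n$, so the potential accumulates only at the final jump step and $g_n(u_a)=g_G(z_a)$. Thus
\[
(B_{k,n}1_{\hat Y}\hat v)(z)=\sum_{a:\,\varphi(a)>n,\ u_a\in\hat Y}e^{g_G(z_a)}e^{ik\cdot\hat h_n(u_a)}\hat v(u_a),
\]
where by Proposition~\ref{prop-hatY} the cutoff $u_a\in\hat Y$ is a condition on $a$ alone. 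The Gibbs-Markov estimate \eqref{eq-GM} gives $|B_{k,n}1_{\hat Y}\hat v|_\infty\le C|\hat v|_\infty\mu_\Delta(\hat Y\cap D_n)$. For the seminorm $|B_{k,n}1_{\hat Y}\hat v|_\theta$ I will run the three-term splitting of Lemma~\ref{lem-Rk}: the Jacobian difference is absorbed by \eqref{eq-GM}; the $\hat v$-difference gains a factor $\theta$ from $d_\theta(u_a,u'_a)=\theta\,d_\theta(z,z')$; and for the phase difference I would bound $|\hat h_n(u_a)-\hat h_n(u'_a)|\le Cn\,d(z,z')^\eta$ via condition~(3) of the nonuniformly expanding setup and Proposition~\ref{prop-d}. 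The inequality $|e^{ix}-e^{iy}|\le 2|x-y|^\eps$ together with the choice $\theta=\gamma^\eps$ for $\gamma\in[\lambda^{-\eta},1)$ then delivers $\|B_{k,n}1_{\hat Y}\|_{F_\theta(\hat Y)\to F_\theta(Z)}\le C(1+|k|^\eps n^\eps)\mu_\Delta(\hat Y\cap D_n)$.

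To close, setting $a_j=\mu_\Delta(\hat Y\cap D_j)$ with $A_n=\sum_{j\ge n}a_j=O(n^{-(\beta-\eps')})$ from Proposition~\ref{prop-hatY2}, Abel summation yields $\sum_{j\ge n}j^\eps a_j\le n^\eps A_n+\eps\sum_{j\ge n+1}j^{\eps-1}A_j=O(n^{-(\beta-\eps-\eps')})$, and shrinking $\eps,\eps'$ gives (c). The delicate point I anticipate is the phase estimate: it is essentially a repeat of Lemma~\ref{lem-Rk}, but one must track the partial induced cocycle $\hat h_n(u_a)=\sum_{\ell=\varphi(a)-n}^{\varphi(a)-1}h(f^\ell z_a)$ rather than the full $H$, and balance the $n^\eps$ growth from the phase against the decay of $A_n$ so that Abel summation still closes; this balance forces $\eps$ to be small and accounts for the $|k|^\eps$ factor in the statement.
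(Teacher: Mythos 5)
Your proposal is correct and follows essentially the same route as the paper: explicit single-branch formulas showing $1_{\hat Y}A_{k,n}v$ and $1_{\hat Y}E_{k,n}1_{\hat Y}\hat v$ are supported in $\hat Y\cap\Delta_n$ and $\bigcup_{\ell>n}\hat Y\cap\Delta_\ell$ respectively, Proposition~\ref{prop-hatY} to reduce the cutoff in $B_{k,n}1_{\hat Y}$ to a condition on the cylinder $a$ with mass $\mu_\Delta(\hat Y\cap D_n)$, and the three-term splitting of Lemma~\ref{lem-Rk} with the $|e^{ix}-1|\le 2|x|^\eps$ trick for the phase. The only difference is that you make explicit the summation-by-parts step handling the extra $j^\eps$ factor, which the paper absorbs silently into the ``$\eps$ arbitrary'' convention.
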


\begin{proof}
(a)
We have
$|1_{\hat Y}A_{k,n}v|_\infty\le |v|_\infty$ and
$\supp 1_{\hat Y}A_{k,n}v\subset \hat Y\cap\Delta_n$.
Hence
$|1_{\hat Y}A_{k,n}v|_1\le \mu_\Delta(\hat Y\cap\Delta_n)|v|_\infty$ and so
$\|1_{\hat Y}A_{k,n}\|_{L^\infty(Z)\mapsto L^1(\hat Y)}\le \mu_\Delta(\hat Y\cap\Delta_n)$.
Part~(a) now follows from Proposition~\ref{prop-hatY2}.

Similarly
$|1_{\hat Y}E_{k,n}1_{\hat Y}\hat v|_\infty\le |\hat v|_\infty$ and
$\supp 1_{\hat Y}E_{k,n}1_{\hat Y}\hat v\subset \bigcup_{\ell>n}\hat Y\cap\Delta_\ell$.
Hence
$\|1_{\hat Y}E_{k,n}1_{\hat Y}\|_{L^\infty(\hat Y)\mapsto L^1(\hat Y)}\le \sum_{\ell>n}\mu_\Delta(\hat Y\cap\Delta_\ell)$, so part~(b) follows from
Proposition~\ref{prop-hatY2}.

Finally,
\begin{align*}
(B_{k,n}1_{\hat Y}\hat v)(z) & =\sum_{a\in\alpha} 1_{\{\varphi(a)>n\}}e^{g(z_a)}e^{ik\cdot \hat h_n(z_a,\varphi(a)-n)}1_{\hat Y}(z_a,\varphi(a)-n)\hat v(z_a,\varphi(a)-n).
\end{align*}
By Proposition~\ref{prop-hatY}, $1_{\hat Y}(z_a,\varphi(a)-n)=1$ if and only if
$a\times\{\varphi(a)-n\}\subset\hat Y$.
Hence
\begin{align*}
(B_{k,n}1_{\hat Y}\hat v)(z) 
& ={\sum}^* e^{g(z_a)}e^{ik\cdot h_n(f^{\varphi(a)-n}z_a)}\hat v(z_a,\varphi(a)-n),
\end{align*}
where ${\sum}^*$ denotes summation over those $a\in\alpha$ such that $a\times\{\varphi(a)-n\}\subset\hat Y\cap D_n$.
By Proposition~\ref{prop-hatY},
\begin{align} \label{eq-hatY}   
{\sum}^*\mu_Z(a)={\sum}^*\mu_\Delta(a\times\{\varphi(a)-n\})=\mu_\Delta(\hat Y\cap D_n). 
\end{align}
Hence by~\eqref{eq-GM}, $|B_{k,n}1_{\hat Y}\hat v|_\infty \le 
C_3|\hat v|_\infty{\sum}^* \mu_Z(a)
\le C_3 |\hat v|_\infty \mu_\Delta(\hat Y\cap D_n)$.

Also, for $z,z'\in Z$, we have that
$(B_{k,n}1_{\hat Y}\hat v)(z) -(B_{k,n}1_{\hat Y}\hat v)(z')=I_1+I_2+I_3$, where
\begin{align*}
I_1 & ={\sum}^* (e^{g(z_a)}-e^{g(z_a')})e^{ik\cdot h_n(f^{\varphi(a)-n}z_a)}\hat v(z_a,\varphi(a)-n), \\
I_2 & ={\sum}^*  e^{g(z_a')}
(e^{ik\cdot h_n(f^{\varphi(a)-n}z_a)}
-e^{ik\cdot h_n(f^{\varphi(a)-n}z_a')})
\hat v(z_a,\varphi(a)-n), \\
I_3 & ={\sum}^*  e^{g(z_a')}e^{ik\cdot h_n(f^{\varphi(a)-n}z_a')}
(\hat v(z_a,\varphi(a)-n)- \hat v(z_a',\varphi(a)-n)).
\end{align*}
By~\eqref{eq-GM} and~\eqref{eq-hatY},
$|I_1|  \le C_3 |\hat v|_\infty\mu_\Delta(\hat Y\cap D_n)\,d_\theta(z,z')$, and
\begin{align*}
|I_3| & \le C_3 |\hat v|_\theta\mu_\Delta(\hat Y\cap D_n)\,d_\theta((z_a,\varphi(a)-n),(z_a',\varphi(a)-n))
\\ & = C_3\theta |\hat v|_\theta\mu_\Delta(\hat Y\cap D_n)\,d_\theta(z,z').
\end{align*}
Let $\gamma=\theta^{1/\eps}$.
As in the proof of Proposition~\ref{prop-DH}, 
\[
|h_n(f^{\varphi(a)-n}z_a)-
h_n(f^{\varphi(a)-n}z_a')|\le
\sum_{\ell=\varphi(a)-n}^{\varphi(a)-1}
|h|_{C^\eta}d(f^\ell z_a,f^\ell z_a')^\eta \ll nd_\gamma(z,z').
\]
Hence using similar arguments as
in the proof of Lemma~\ref{lem-Rk},
\begin{align*}
|e^{ik\cdot h_n(f^{\varphi(a)-n}z_a)}
-e^{ik\cdot h_n(f^{\varphi(a)-n}z_a')}| 
& \le 2|k|^\eps 
|h_n(f^{\varphi(a)-n}z_a)-
h_n(f^{\varphi(a)-n}z_a')|^\eps
\\ & \ll |k|^\eps n^\eps d_\theta(z,z'). 
\end{align*}
It follows that
\[
|I_2|\ll |\hat v|_\infty |k|^\eps n^\eps \mu_\Delta(\hat Y\cap D_n)\, d_\theta(z,z').
\]
Hence $|B_{k,n}1_{\hat Y}\hat v|_\theta\ll |k|^\eps n^\eps \mu_\Delta(\hat Y\cap D_n)\|\hat v\|_\theta$ and so
$\|B_{k,n}1_{\hat Y}\|_{F_\theta(\hat Y)\mapsto F_\theta(Z)}\ll |k|^\eps n^\eps \mu_\Delta(\hat Y\cap D_n)$.
By Proposition~\ref{prop-hatY2},
$\sum_{j\ge n}\|B_{k,j}1_{\hat Y}\|_{F_\theta(\hat Y)\mapsto F_\theta(Z)}=O(|k|^\eps n^{-(\beta-2\eps)})$, yielding part~(c).
\end{proof}

\begin{cor} \label{cor-ABE}  
There exists $C,\,\xi>0$ such that
\[
\|1_{\hat Y}L_k^n1_{\hat Y}\|_{F_\theta(\hat Y)\mapsto L^1(\hat Y)}\le C|k|^\xi n^{-(\beta-\eps)}
\]
for all $k\in\Z^d\setminus\{0\}$, $n\ge1$.
\end{cor}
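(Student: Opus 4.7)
The strategy is to combine the decomposition~\eqref{eq-G2} with the norm estimates from Proposition~\ref{prop-ABE} and Lemma~\ref{lem-Tk}, and to bound the resulting triple convolution by an iterated tail-sum argument. First I would check that each summand in~\eqref{eq-G2} defines a bounded operator $F_\theta(\hat Y)\to L^1(\hat Y)$: Lemma~\ref{lem-Tk} gives that $T_{k,n_2}$ is bounded on $F_\theta(Z)$, and using the continuous embedding $F_\theta(Z)\hookrightarrow L^\infty(Z)$, the chain
\[
F_\theta(\hat Y) \xrightarrow{B_{k,n_3}1_{\hat Y}} F_\theta(Z) \xrightarrow{T_{k,n_2}} F_\theta(Z) \hookrightarrow L^\infty(Z) \xrightarrow{1_{\hat Y}A_{k,n_1}} L^1(\hat Y)
\]
is well defined, and submultiplicativity of operator norms gives
\[
\|(1_{\hat Y}A_{k,n_1})\,T_{k,n_2}\,(B_{k,n_3}1_{\hat Y})\|_{F_\theta(\hat Y)\to L^1(\hat Y)} \le a_{n_1}\, t_{n_2}\, b_{n_3},
\]
with $a_n = \|1_{\hat Y}A_{k,n}\|_{L^\infty\to L^1}$, $t_n = \|T_{k,n}\|_{F_\theta\to F_\theta}$, and $b_n = \|B_{k,n}1_{\hat Y}\|_{F_\theta\to F_\theta}$.

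The available estimates are: Proposition~\ref{prop-ABE}(a),(c) give tail sums $\sum_{j\ge n}a_j = O(n^{-(\beta-\eps)})$ and $\sum_{j\ge n}b_j = O(|k|^\eps n^{-(\beta-\eps)})$, so in particular the total sums $\sum_j a_j$ and $\sum_j b_j$ are finite; Lemma~\ref{lem-Tk} gives $t_n \le C|k|^\xi n^{-(\beta-\eps)}$, which also furnishes the uniform bound $t_n \le C|k|^\xi$. The crux is bounding $\sum_{n_1+n_2+n_3=n} a_{n_1}t_{n_2}b_{n_3}$ by iterating the following convolution lemma: \emph{if nonnegative sequences $\alpha,\beta$ satisfy $\bar\alpha_n:=\sum_{j\ge n}\alpha_j = O(n^{-p})$ and $\bar\beta_n = O(n^{-p})$, then both $(\alpha*\beta)_n$ and $\sum_{m\ge n}(\alpha*\beta)_m$ are $O(n^{-p})$}; this follows by splitting $\sum_j \alpha_j\,\bar\beta_{n-j}$ at $j=n/2$ and using the pointwise bound on $\bar\beta$ on one side and the tail bound on $\alpha$ on the other. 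Applied to $(a,b)$, this gives $\overline{(a*b)}_n = O(|k|^\eps n^{-(\beta-\eps)})$. For $\sum_m (a*b)_m\, t_{n-m}$, split at $m=n/2$: on $\{m\le n/2\}$ use $t_{n-m}\le C|k|^\xi(n/2)^{-(\beta-\eps)}$ together with $\sum_m(a*b)_m<\infty$, and on $\{m>n/2\}$ use the uniform bound $t_{n-m}\le C|k|^\xi$ together with $\sum_{m>n/2}(a*b)_m = O(|k|^\eps n^{-(\beta-\eps)})$; each half contributes $O(|k|^{\xi+\eps} n^{-(\beta-\eps)})$.

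Finally, the error term in~\eqref{eq-G2} satisfies $\|1_{\hat Y}E_{k,n}1_{\hat Y}\|_{L^\infty\to L^1} = O(n^{-(\beta-\eps)})$ by Proposition~\ref{prop-ABE}(b), which is of acceptable order; relabelling $\xi+\eps\mapsto\xi$ then yields the claim. The main technical point, and the reason the argument works uniformly in $\beta$, is the observation that the convolution of two tail-summable sequences is again tail-summable at the same rate: this lets us exploit the tail decay inherited from $a$ and $b$ to control the triple convolution even when the pointwise decay $t_n = O(n^{-(\beta-\eps)})$ fails to be summable (as happens for $\beta\le 1$).
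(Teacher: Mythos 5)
Your proof is correct and follows essentially the same route as the paper: the decomposition~\eqref{eq-G2}, the bounds of Proposition~\ref{prop-ABE} and Lemma~\ref{lem-Tk}, and an elementary convolution estimate for the triple sum. The only (cosmetic) difference is that you convolve the two tail-summable sequences $a,b$ first and show the convolution is again tail-summable before bringing in $t$, whereas the paper's stated lemma convolves the pointwise-bounded sequence with a tail-summable one; both reduce to the same splitting at $n/2$.
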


\begin{proof}
An elementary calculation shows that if $u_n$, $v_n$ are real sequences and
$|u_n|=O(n^{-\gamma})$,
$\sum_{j\ge n}|v_j|=O(n^{-\gamma})$, where $\gamma>0$, then
$|(u\star v)_n| =O(n^{-\gamma})$.
We apply this with $\gamma=\beta-\eps$.

Note that $G=\hat f^\varphi:Z\to Z$ is the first return map to $Z$ for the tower map $\hat f:\Delta\to\Delta$.
Also, the induced cocycle $H:Z\to\R$ is identical starting from $f$ and $h$ or from $\hat f$ and $\hat h$ so we still have nonexistence of approximate eigenfunctions when working in the tower set up.
Hence Lemma~\ref{lem-Tk} applies and we have that
$\|T_{k,n}\|\ll |k|^\xi n^{-(\beta-\eps)}$.

Combining this with the estimates for
$\sum_{j\ge n}1_{\hat Y}A_{k,j}$ and
$\sum_{j\ge n}B_{k,j}1_{\hat Y}$ in Proposition~\ref{prop-ABE}, it follows that
\[
\Bigl\|\sum_{n_1+n_2+n_3=n} (1_{\hat Y}A_{k,n_1})\ T_{k,n_2} \ (B_{k,n_3}1_{\hat Y})\Bigr\|_{F_\theta(\hat Y)\mapsto L^1(\hat Y)} \ll |k|^{\xi+\eps}n^{-(\beta-\eps)}.
\]
Using~\eqref{eq-G2} and the
estimate for $1_{\hat Y}E_{k,n}1_{\hat Y}$
in Proposition~\ref{prop-ABE}, we obtain the desired estimate
for $1_{\hat Y}L_k^n1_{\hat Y}$.
\end{proof}

\begin{pfof}{Theorem~\ref{thm-S1}}
Since $\pi_*\mu_\Delta=\mu$ and $v$ and $w$ are supported in $Y\times\T^d$, for $k\in\Z^d\setminus\{0\}$ and
$n\ge1$,
\begin{align*}
\int_X e^{ik\cdot h_n}v_{-k}\,w_k\circ f^n\,d\mu
=
\int_\Delta e^{ik\cdot \hat h_n}\hat v_{-k}\,\hat w_k\circ \hat f^n\,d\mu_\Delta
=\int_X 1_{\hat Y}L_k^n1_{\hat Y}\hat v_{-k}\,\hat w_k\,d\mu_\Delta.
\end{align*}
Hence
\[
\Bigl|\int_X e^{ik\cdot h_n}v_{-k}\,w_k\circ f^n\,d\mu\Bigr|
\le |1_{\hat Y}L_k^n1_{\hat Y}v_{-k}|_1 |w_k|_\infty
\le \|1_{\hat Y}L_k^n1_{\hat Y}\|\|v_{-k}\|_\theta |w|_\infty.
\]
By Corollary~\ref{cor-ABE},
$\|1_{\hat Y}L_k^n1_{\hat Y}\|\ll |k|^\xi n^{-(\beta-\eps)}$.
By Proposition~\ref{prop-hatv}, $\|v_{-k}\|_\theta\le C\|v_{-k}\|_{C^\eta}$.
It follows from the usual integration by parts argument that $\|v_{-k}\|_{C^\eta}\ll |k|^{-p}\|v\|_{C^{\eta,p}}$.  
Hence
\[
\Bigl|\int_X e^{ik\cdot h_n}v_{-k}\,w_k\circ f^n\,d\mu\Bigr|
\ll |k|^{\xi-p} n^{-(\beta-\eps)}\|v\|_{C^{\eta,p}}|w|_\infty.
\]
Taking $p>\xi+d$, we obtain that
\[
|S_{v,w}(n)|\ll \sum_{k\in\Z^d\setminus\{0\}}  |k|^{\xi-p} n^{-(\beta-\eps)}\|v\|_{C^{\eta,p}}|w|_\infty\ll n^{-(\beta-\eps)}\|v\|_{C^{\eta,p}}|w|_\infty
\]
as required.
\end{pfof}

\section{Proof of Theorem~\ref{thm-S2}}
\label{sec-S2}

Let $f:X\to X$ 
with induced map Gibbs-Markov map $G=f^\varphi:Z\to Z$ as in Section~\ref{sec-GM}.  
Let $\mu_Z$ denote the associated 
ergodic $G$-invariant probability measure on~$Z$.  
We suppose that
$\mu_Z(\varphi>n)=O(n^{-\beta})$ where $\beta>1$.

Again, we fix $\eps\in(0,\beta)$ sufficiently small (to be specified) and $\theta\in[\lambda^{-\eta\eps},1)$.
We assume in particular that $\beta-\eps>1$.

The tower map
$\hat f:\Delta\to\Delta$, invariant measure 
$\mu_\Delta$, and lifted cocycle
$\hat h=h\circ\pi:\Delta\to\T^d$ are all defined as before.
Also we define
$L:L^1(\Delta)\to L^1(\Delta)$ and
$L_k\hat v=L(e^{ik\cdot \hat h}\hat v)$ as before.

The arguments are similar to those in Section~\ref{sec-S1}, the main differences being that we use~\eqref{eq-G} instead of~\eqref{eq-G2} and that the estimates are simpler but weaker.

\begin{prop} \label{prop-ABE2}
There is a constant $C>0$ such that for all $k\in\Z^d\setminus\{0\}$,
$n\ge1$,
\begin{align*}
  \|A_{k,n}\|_{L^\infty(Z)\mapsto L^1(\Delta)}  & \le   \mu(\varphi>n), \\
  \|B_{k,n}\|_{F_\theta(\Delta)\mapsto F_\theta(Z)} & \le C \mu(\varphi>n)
|k|^\eps n^\eps, \\
 \|E_{k,n}\|_{L^\infty(\Delta)\mapsto L^1(\Delta)} & \SMALL \le \sum_{j>n}\mu(\varphi>j).
\end{align*}
\end{prop}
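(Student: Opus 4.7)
The argument closely parallels the proof of Proposition~\ref{prop-ABE}, only with the multiplications by $1_{\hat Y}$ removed. Consequently the relevant measure is $\mu_Z(\varphi>n)$ (the full measure of the level $\Delta_n$ and of the diagonal $D_n$), and no analogue of Proposition~\ref{prop-hatY2} is needed.

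For the estimate on $A_{k,n}$: the tower Jacobian equals $1$ off the return levels, so $g_n(z)=0$ whenever $z\in Z$ with $\varphi(z)>n$, and the defining sum reduces to a single term. This gives the pointwise bound $|(A_{k,n}v)(z,n)|\le|v(z)|$, with support confined to $\Delta_n$. Integrating against $\mu_\Delta$ yields $\|A_{k,n}v\|_{L^1(\Delta)}\le\mu_Z(\varphi>n)|v|_\infty$. The bound on $E_{k,n}$ is identical in flavour, except that the support is now $\bigcup_{j>n}\Delta_j$, producing the series $\sum_{j>n}\mu_Z(\varphi>j)$.

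The bound on $B_{k,n}$ is the only piece requiring genuine work, and I would carry it out as follows. Writing
\[
(B_{k,n}\hat v)(z)=\sum_{a\in\alpha:\,\varphi(a)>n} e^{g(z_a)}e^{ik\cdot\hat h_n(z_a,\varphi(a)-n)}\hat v(z_a,\varphi(a)-n),
\]
where $z_a\in a$ is the preimage of $z$ under $G|_a$, the $L^\infty$ bound is immediate from \eqref{eq-GM} combined with $\sum_{a:\varphi(a)>n}\mu_Z(a)=\mu_Z(\varphi>n)$. For the Lipschitz seminorm I would decompose $(B_{k,n}\hat v)(z)-(B_{k,n}\hat v)(z')=I_1+I_2+I_3$ exactly as in the proof of Proposition~\ref{prop-ABE}(c). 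Terms $I_1$ and $I_3$ are handled via the Gibbs-Markov distortion estimate \eqref{eq-GM} together with the identity $d_\theta((z_a,\ell),(z_a',\ell))=\theta\,d_\theta(z,z')$, each contributing a factor $\mu_Z(\varphi>n)$. The cocycle term $I_2$ is dealt with by the same H\"older computation already used in Lemma~\ref{lem-Rk}: expanding $e^{ik\cdot h_n(\cdot)}$ along the $n$ orbit steps leading up to the return, and applying the inequality $|e^{ix}-1|\le 2|x|^\eps$, produces the factor $|k|^\eps n^\eps$.

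There is no genuine obstacle here, as every ingredient has already appeared in Lemma~\ref{lem-Rk} or in the proof of Proposition~\ref{prop-ABE}. The only place a little care is required is in tracking the cocycle estimate in $I_2$, where the $n$ orbit steps are what combine with $|k|^\eps$ to give the stated $|k|^\eps n^\eps$ dependence.
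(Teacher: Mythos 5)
Your proof is correct and is exactly what the paper intends: its own proof of Proposition~\ref{prop-ABE2} consists of the single sentence that the estimates are similar to those of Proposition~\ref{prop-ABE}, and you have carried out precisely that adaptation (dropping the $1_{\hat Y}$ truncations, so that the supports become the full levels $\Delta_n$ and diagonals $D_n$ of measure $\mu_Z(\varphi>n)$, with the $|k|^\eps n^\eps$ factor in the $B_{k,n}$ bound coming from the same cocycle estimate as in Lemma~\ref{lem-Rk}).
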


\begin{proof}  
	These estimates are similar to the ones in Proposition~\ref{prop-ABE}. 
\end{proof}

\begin{cor} \label{cor-ABE2}
Assume condition (v).
There exists $C,\,\xi>0$ such that 
 \[
\|L_k^n\|_{F_\theta(\Delta)\mapsto L^1(\Delta)}\le C|k|^\xi n^{-(\beta-1)}
\]
for all $k\in\Z^d\setminus\{0\}$, $n\ge1$,
\end{cor}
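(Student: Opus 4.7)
The approach is to mimic the proof of Corollary~\ref{cor-ABE} but without restriction to $\hat Y$: apply the Gou\"ezel decomposition~\eqref{eq-G},
\[
L_k^n=\sum_{n_1+n_2+n_3=n}A_{k,n_1}T_{k,n_2}B_{k,n_3}+E_{k,n},
\]
and estimate each piece. The ingredient bounds are immediate from the results already assembled: Proposition~\ref{prop-ABE2} combined with $\mu_Z(\varphi>n)=O(n^{-\beta})$ yields $\|A_{k,n}\|\ll n^{-\beta}$, $\|B_{k,n}\|\ll|k|^\eps n^{-(\beta-\eps)}$ and $\|E_{k,n}\|\ll n^{-(\beta-1)}$, while Lemma~\ref{lem-Tk}---which is where condition~(v) enters, via the Dolgopyat-type estimate of Lemma~\ref{lem-approx}---gives $\|T_{k,n}\|\ll|k|^\xi n^{-(\beta-\eps)}$. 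I fix $\eps\in(0,1)$ small enough that $\beta-\eps>1$, so that all the tail sums below converge.

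The first step is to estimate the triple convolution. Using the elementary fact invoked in the proof of Corollary~\ref{cor-ABE}---if $|u_n|=O(n^{-\gamma})$ and $\sum_{j\ge n}|v_j|=O(n^{-\gamma})$, then $|(u\star v)_n|=O(n^{-\gamma})$---I would convolve $A$ with $T$ and then the result with $B$, with $\gamma=\beta-\eps$. The required tail summability at each stage is ensured by $\beta-\eps>1$, and an entirely routine split-at-$n/2$ computation shows
\[
\sum_{n_1+n_2+n_3=n}\|A_{k,n_1}\|\,\|T_{k,n_2}\|\,\|B_{k,n_3}\|\ll|k|^{\xi+\eps}n^{-(\beta-\eps)}.
\]

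The second step is the simple observation that, because $\beta-\eps>\beta-1$, the triple convolution decays strictly faster than the required $n^{-(\beta-1)}$, so the bottleneck in~\eqref{eq-G} is the error term $E_{k,n}$ itself, whose norm has no $|k|$-dependence. Adding the two contributions and absorbing the $|k|^\eps$ into the constant by enlarging $\xi$ produces the stated bound $\|L_k^n\|_{F_\theta(\Delta)\to L^1(\Delta)}\ll|k|^\xi n^{-(\beta-1)}$.

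The main obstacle is essentially bookkeeping rather than conceptual: I need to track the $|k|^\xi$ factor from Lemma~\ref{lem-Tk} through the convolution without inadvertently inflating the power of $|k|$, and keep the $n$-rate from being degraded to $n^{-(\beta-\eps-1)}$ by the slow tail of $T$---this is why one applies the convolution lemma with $\gamma=\beta-\eps$ (valid because $\beta-\eps>1$) rather than taking a pointwise tail-sum bound on $T$. Conceptually, unlike in Corollary~\ref{cor-ABE} where the restriction to $\hat Y$ gave stronger tail estimates on $A,B,E$, here it is the unrestricted $E_{k,n}$ that imposes the rate $n^{-(\beta-1)}$.
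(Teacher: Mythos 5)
Your proposal is correct and follows essentially the same route as the paper: decompose via~\eqref{eq-G}, bound $A_{k,n}$, $B_{k,n}$, $E_{k,n}$ by Proposition~\ref{prop-ABE2} and $T_{k,n}$ by Lemma~\ref{lem-Tk} (where condition~(v) enters), use $\beta-\eps>1$ to control the triple convolution at rate $O(|k|^\xi n^{-(\beta-\eps)})$, and observe that $E_{k,n}=O(n^{-(\beta-1)})$ is the dominant term. Your remarks on tracking the $|k|$-powers and on $E_{k,n}$ being the bottleneck are accurate but add nothing beyond the paper's (terser) argument.
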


\begin{proof}
We estimate the sequences in~\eqref{eq-G}.
As in the proof of Corollary~\ref{cor-ABE},
$\|T_{k,n}\|\ll |k|^\xi n^{-(\beta-\eps)}$.
By Proposition~\ref{prop-ABE2}, the same estimate holds for
$\|A_{k,n}\|$ and 
$\|B_{k,n}\|$.  Since $\beta-\eps>1$, the convolution of these three sequences 
is also $O(|k|^\xi n^{-(\beta-\eps)})$ for some $\xi$.
Finally, by Proposition~\ref{prop-ABE2}, $\|E_{k,n}\|\ll n^{-(\beta-1)}$.
\end{proof}

\begin{pfof}{Theorem~\ref{thm-S2}}
This follows from Corollary~\ref{cor-ABE2} in the same way that
Theorem~\ref{thm-S1} followed from Corollary~\ref{cor-ABE}.
\end{pfof}

 \appendix
\section{Good asymptotics and nonexistence of approximate eigenfunctions}
\label{app-good}

In this appendix, we 
prove nonexistence of approximate eigenfunctions for an open and dense set of smooth toral extensions.  The method is based on the notion of good asymptotics~\cite{FMT05,FMT07}.

Recall that $G:Z\to Z$ is the induced Gibbs-Markov map with induced cocycle $H:Z\to\T^d$.  
Let $p_0\in Z$ be a fixed point for $G$ and let $p_N$ be a sequence of periodic points, $N\ge1$, with $p_N\to p_0$ and $G^Np_N=p_N$.
We assume that the set of periodic orbits $G^jp_N$, $j\ge0$, $N\ge1$ is contained in a finite union $Z_0$ of partition elements.
In a neighborhood of $p_0$, we can lift $H$ to a cocycle with values in $\R^d$.

\begin{defn} \label{def-good}
The sequence of periodic points $p_N$ has {\em good asymptotics} if
\begin{align} \label{eq-good}
H_N(p_N) & =NH(p_0)+\kappa+J_N\gamma^N+o(\gamma^N) \quad\text{as $N\to\infty$},
\\
\nonumber \varphi_N(p_N)  & =N\varphi(p_0)+\kappa',\quad N\ge1,
\end{align}
where $\gamma\in(0,1)$, $\kappa,\,J_N\in\R^d$, $\kappa'\in\Z$ and
the $i$'th coordinate of $J_N$ has the form
$J_{N,i}=E_{N,i}\cos(N\theta_i+\psi_{N,i})$.
 Moreover, $E_{N,i}$ is a bounded 
sequence of real numbers with $\liminf_{N\to\infty}|E_{N,i}|>0$ for each $i$, and either (a) $\theta_i=0$ and $\psi_{N,i}\equiv0$
or (b) $\theta_i\in(0,\pi)$ and $\psi_{N,i}\in (\tilde\theta_i-\pi/12,\tilde\theta_i+\pi/12)$ for some $\tilde\theta_i$.
\end{defn}

\begin{prop}  \label{prop-good}
If $p_N$ has good asymptotics, then 
 there are no approximate eigenfunctions on the finite subsystem $Z_\infty$ corresponding to $Z_0$.
\end{prop}

\begin{proof}
The argument is an adaptation of~\cite[Proof of Theorem~1.6(a)]{FMT07}.
Suppose that there are approximate eigenfunctions $u_j$ on $Z_\infty$, so
$|M_{k_j,\omega_j}^{n_j}u_j-e^{i\chi_j}u_j|=O(|k_j|^{-\xi})$.
We show that
$\liminf_{N\to\infty}|E_{N,i}|=0$ for some $i\in\{1,\dots,d\}$, so that good asymptotics fails.

Since $|M_{k_j,\omega_j}|_\infty=1$, it is immediate that for all $N\ge1$,
\[
|e^{-i\cdot k_j H_{n_jN}}e^{-i\omega_j \varphi_{n_jN}}u_j\circ G^{n_jN}-e^{iN\chi_j}u_j|
=|M_{k_j,\omega_j}^{n_jN}u_j-e^{iN\chi_j}u_j|
=O(N|k_j|^{-\xi}).
\]
Substituting in the periodic points $p_N$, and using the fact that $|u_j|\equiv1$, we obtain
\[
|e^{i(n_jk_j\cdot H_N(p_N)+n_j\omega_j\varphi_N(p_N)+N\chi_j)}-1|=O(N|k_j|^{-\xi}),
\]
and hence
\[
\dist(n_jk_j\cdot H_N(p_N)+n_j\omega_j\varphi_N(p_N)+N\chi_j,2\pi\Z)=O(N|k_j|^{-\xi}).
\]
Similarly, 
\[
\dist(Nn_jk_j\cdot H(p_0)+Nn_j\omega_j\varphi(p_0)+N\chi_j,2\pi\Z)=O(N|k_j|^{-\xi}).
\]
Subtracting these expressions and using~\eqref{eq-good},
\[
\dist(n_jk_j\cdot(\kappa+J_N\gamma^N+o(\gamma^N))+n_j\omega_j\kappa',2\pi\Z)=O(N|k_j|^{-\xi}).
\]

Recall that $n_j=[\zeta\ln|k_j|]$.
Set $N=N(j)=[\rho\ln|k_j|]$.  For large enough $\rho>0$, we have 
$n_jk_jE_{N(j)}\gamma^{N(j)}=O(|k_j|^{-2\xi})$.
It follows that 
$\dist(n_jk_j\cdot\kappa+n_j\omega_j\kappa',2\pi\Z)=O(|k_j|^{-\xi}\ln|k_j|)$ and so
\begin{align} \label{eq-dist}
\dist(n_jk_j\cdot(J_N\gamma^N+o(\gamma^N)),2\pi\Z)=O(N|k_j|^{-\xi})+O(|k_j|^{-\xi}\ln|k_j|).
\end{align}

Let $S=\sup_N |J_N|$ and set $M(j)=[(\ln(n_j|k_j|)+\ln S + \ln
2)/(-\ln\gamma)]+1$. Then $Sn_j|k_j|\gamma^{M(j)}=\frac12\gamma^{\rho_j}$,
with $\rho_j\in (0,1]$.
In particular, $|Sn_j|k_j|\gamma^{M(j)}|\le\frac12$ and so
taking $N=M(j)+m$ with $m\in\N$ fixed, condition~\eqref{eq-dist} implies that
\begin{align*} 
  \lim_{j\to\infty}n_jk_j\cdot J_{M(j)+m}\gamma^{M(j)}=0.
\end{align*}
Moreover, $n_j|k_j|\gamma^{M(j)}\ge \gamma/(2S)$ and it follows that
there exists $i\in\{1,\dots,d\}$ such that
\begin{align*} 
  \lim_{j\to\infty}E_{M(j)+m,i}\cos((M(j)+m)\theta_i+\psi_{{M(j)+m,i}})=0.
\end{align*}
We show that for this $i$, there is a choice of $m\in\N$ for
which $\cos((M(j)+m)\theta_i+\psi_{M(j)+m,i})$ does not converge to $0$ as
$j\to\infty$

Assume for contradiction that for each integer $m\ge 0$
\begin{equation}
  \label{eq.bad-angles}
  \lim_{j\to\infty}(M(j)+m)\theta_i+\psi_{M(j)+m,i}=\pi/2 \mod \pi.
\end{equation}
Recall that if $\theta_i=0$ then $\psi_{N}\equiv0$,
hence~\eqref{eq.bad-angles} fails (with $m=0$).
Otherwise, $\theta_i\in(0,\pi)$ and $|\psi_{N}-\tilde\theta_i|<\pi/12$.
Taking differences of~\eqref{eq.bad-angles} for various values of
$m$ we obtain that $\ell\theta_i \in [-\pi/6, \pi/6] \bmod \pi$ for all
$\ell$, which is impossible.
\end{proof}

  Next, we recall the construction of periodic orbits with good asymptotics
in~\cite{FMT05,FMT07}.  
We assume that $(X,d)$ is a Riemannian manifold.  Let
$Z_1$ and $Z_2$ be two of the partition elements in $Z$ and assume that these are submanifolds of $X$ and that $G|_{Z_j}:Z_j\to Z$ and
$H|_{Z_j}:Z_j\to\T^d$ are $C^r$ for some $r\ge2$.  These are natural assumptions for piecewise $C^r$ dynamical systems $f:X\to X$ and dynamically $C^r$ cocycles $h:X\to\T^d$.  
For instance, the set up includes Examples~\ref{eg-1} and~\ref{eg-2}; the maps are not $C^2$ for $\gamma<1$, but $G|_a$ is $C^\infty$ for all partition elements $a$.
Similarly, $H|_a$ is $C^r$ in these examples provided $h|_{f^ja}$ is $C^r$ for $j=0,\dots,\varphi(a)-1$.

Let $p_0\in Z_1$ be a fixed point for $G$ and choose a transverse homoclinic point $z\in Z_2$.
Following~\cite{FMT05,FMT07}, we construct a sequence of $N$-periodic points $p_N$, $N\ge1$, for $G$ with orbits lying in $Z_0=Z_1\cup Z_2$. The sequence automatically has good asymptotics except that in exceptional cases there may exist $i$ such that $\liminf_{N\to\infty}|E_{N,i}|=0$.  By~\cite{FMT05,FMT07}, the liminfs are positive for a $C^2$ open and $C^r$ dense set
of cocycles.   (The construction in~\cite{FMT05,FMT07} yields the expression for $H$ in~\eqref{eq-good}, and the same argument gives a similar expression for $\varphi$.  This simplifies as in~\eqref{eq-good} since $\varphi$ is integer-valued.)

Combining this construction with Proposition~\ref{prop-good}, it follows that nonexistence of approximate eigenfunctions holds for an open and dense set of smooth toral extensions.

\section{Proof of Proposition~\ref{prop-fourier}}
\label{app-B}

In this appendix, we show that the coefficients $T_{k,n}$ and $\hat T_{k,n}$ of $T_k$ coincide for all $\beta>0$, $k\in\Z^d\setminus\{0\}$, $n\ge0$.
The case $k=0$ was treated in~\cite{MT12} using a
dominated convergence argument on an annulus at the boundary of the unit disk.
Here we use the same strategy, but the details are somewhat different.

Throughout we assume nonexistence of eigenfunctions, and we work with a fixed $k\in\Z^d\setminus\{0\}$.  Also, we fix $\eps\in(0,1]$ such that $\varphi^\eps\in L^1(Z)$.

Let $\D=\{z\in\C:|z|<1\}$ and $\overline{\D}=\{z\in\C:|z|\le1\}$.
First, we extend the definition of $R_k$ to the closed unit disk, setting $R_k(z)=\sum_{n=1}^\infty R_{k,n}z^n$ for all $z\in\overline{\D}$.
Then $R_k(z)v=R(e^{ik\cdot H}z^\varphi v)$.
Note that $R_k(e^{i\omega})$ coincides with the operator previously denoted $R_k(\omega)$.  

\begin{prop} \label{prop-f1}
$\sup_{\omega\in[0,2\pi]}\|(I-R_k(e^{i\omega}))^{-1}\|_\theta<\infty$.
\end{prop}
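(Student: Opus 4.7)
Fix $k\in\Z^d\setminus\{0\}$. The proof has two stages: (A) show $(I-R_k(e^{i\omega}))^{-1}$ exists and is bounded on $F_\theta(Z)$ for each $\omega\in[0,2\pi]$, and (B) upgrade this to a bound uniform in $\omega$.

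\textit{Stage (A): pointwise invertibility.}
I would first use Corollary~\ref{cor-Rk}(b) together with Proposition~\ref{prop-infty}(b) to obtain a Lasota--Yorke inequality
\[
|R_k(e^{i\omega})^n v|_\theta\le C_4|k|^\eps|v|_\infty+C_4\theta^n|v|_\theta,\qquad |R_k(e^{i\omega})^n v|_\infty\le|v|_\infty,
\]
uniform in $\omega$. By Hennion's theorem, $R_k(e^{i\omega})$ is quasicompact on $F_\theta(Z)$ with essential spectral radius at most $\theta$, so any spectrum outside $\{|z|\le\theta\}$ consists of finitely many eigenvalues of finite multiplicity. It then remains to rule out $1$ as an eigenvalue. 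Suppose $R_k(e^{i\omega})v=v$ for some nonzero $v\in F_\theta(Z)$, and write $\phi=e^{ik\cdot H+i\omega\varphi}$ with $|\phi|\equiv1$. Then $|v|=|R(\phi v)|\le R|v|$, and since $\int R|v|\,d\mu_Z=\int|v|\,d\mu_Z$, we conclude $R|v|=|v|$. Uniqueness of the Gibbs--Markov invariant density gives $|v|\equiv c>0$. Equality in the triangle inequality $|R(\phi v)(x)|=R|\phi v|(x)$ at every $x$ forces $\phi(y)v(y)$ to have constant argument over $y\in G^{-1}(x)$, which unravels to $\phi v=v\circ G$, i.e., $v\circ G=e^{ik\cdot H}e^{i\omega\varphi}v$. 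Setting $u=v/c$ produces a measurable $u:Z\to S^1$ that is an eigenfunction in the sense of Definition~\ref{def-ef} with $k\neq 0$, contradicting the standing assumption that no eigenfunctions exist.

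\textit{Stage (B): uniform bound.}
I would invoke the Keller--Liverani perturbation theorem for the family $\{R_k(e^{i\omega})\}$ with strong norm $\|\cdot\|_\theta$ and weak norm $|\cdot|_\infty$. Its two hypotheses are (i) the uniform Lasota--Yorke already obtained and (ii) strong continuity $\|R_k(e^{i\omega})-R_k(e^{i\omega'})\|_{F_\theta(Z)\to L^\infty(Z)}\to 0$ as $\omega'\to\omega$. For (ii), write
\[
R_k(e^{i\omega})v-R_k(e^{i\omega'})v={\SMALL\sum_{n\ge1}}R_{k,n}v\,(e^{in\omega}-e^{in\omega'}),
\]
and use $|R_{k,n}v|_\infty\le C_3\mu_Z(\varphi=n)\|v\|_\theta$ from Proposition~\ref{prop-infty}(a). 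Splitting at $N$ large, the tail is controlled by $2C_3\mu_Z(\varphi>N)\|v\|_\theta$ uniformly in $\omega,\omega'$, and the head is bounded by $\bigl(\sum_{n\le N}C_3 n\,\mu_Z(\varphi=n)\bigr)|\omega-\omega'|\|v\|_\theta$, which vanishes as $\omega'\to\omega$. Keller--Liverani then yields upper semicontinuity of the spectrum outside $\{|z|\le\theta\}$ and norm continuity of $\omega\mapsto(I-R_k(e^{i\omega}))^{-1}$ at every $\omega$ where $1$ lies outside the spectrum. By Stage~(A) this holds for all $\omega\in[0,2\pi]$, so compactness of $[0,2\pi]$ delivers the required uniform bound.

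\textit{Main obstacle.} Stage~(B) is the subtle step. In Appendix~B we do \emph{not} assume $\beta>1$, so the series $\sum_n\|R_{k,n}\|_\theta$ need not converge and $\omega\mapsto R_k(e^{i\omega})$ is generally \emph{not} continuous in $F_\theta$ operator norm; the elementary resolvent identity alone is therefore inadequate. The Keller--Liverani framework is exactly tailored to this situation, replacing operator-norm continuity by the weaker strong continuity against $|\cdot|_\infty$ while requiring the uniform Lasota--Yorke estimate we have already verified.
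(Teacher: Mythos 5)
Your Stage~(A) is essentially the paper's argument: quasicompactness via Hennion from Proposition~\ref{prop-infty}(b) and Corollary~\ref{cor-Rk}(b), so $1\in\spec R_k(e^{i\omega})$ would force an eigenfunction $v\in F_\theta(Z)$ with $R_k(e^{i\omega})v=v$; the paper passes to $M_{k,\omega}v=v$ by citing the adjoint relation in~\cite{MN04b}, whereas you verify it directly by the equality case of the triangle inequality and ergodicity of $R$ --- both are fine.

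Stage~(B) is where you diverge, and your stated reason for doing so is mistaken. You claim that for $\beta\le 1$ the series $\sum_n\|R_{k,n}\|_\theta$ need not converge and that $\omega\mapsto R_k(e^{i\omega})$ is not operator-norm continuous on $F_\theta(Z)$. But Corollary~\ref{cor-Rk}(a) gives $\|R_{k,n}\|_\theta\le C_4|k|^\eps\,\mu_Z(\varphi=n)\,n^\eps$ --- the factor is $n^\eps$, not $n$, thanks to the inequality $|e^{ix}-1|\le 2|x|^\eps$ used in Lemma~\ref{lem-Rk} --- and this is summable as soon as $\varphi^\eps\in L^1(Z)$, which holds for small $\eps$ under condition~(iii) for \emph{every} $\beta>0$. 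This is exactly the content of Corollary~\ref{cor-RkHolder}: $\omega\mapsto R_k(\omega)$ is $C^{\beta-\eps}$, in particular continuous, as a map into $L(F_\theta(Z),F_\theta(Z))$. The paper therefore finishes in one line: the invertibility of $I-R_k(e^{i\omega})$ established in Stage~(A), plus norm-continuity in $\omega$, plus compactness of $[0,2\pi]$, give the uniform bound (equivalently, the elementary resolvent identity you dismiss). Your Keller--Liverani route is not wrong --- the uniform Lasota--Yorke inequality and the strong-to-weak continuity estimate you give are both correct --- but it imports a substantially heavier perturbation theorem to handle a difficulty that does not arise here. It would buy something only in a setting where the twist genuinely destroys operator-norm continuity, which the $\eps$-H\"older trick prevents in this paper.
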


\begin{proof}
A standard consequence (see for example~\cite{Hennion93}) of  Proposition~\ref{prop-infty}(b) and Corollary~\ref{cor-Rk}(b) is that $R_k(e^{i\omega})$ has essential spectral radius at most $\theta$.
Hence if $1\in\spec R_k(e^{i\omega})$, then there exists 
 a nonzero function $v\in F_\theta(Z)$ such that $R_k(e^{i\omega})v=v$.
A calculation using the fact that $M_{k,\omega}$ is the $L^2$ adjoint of $R_k(e^{i\omega})$ (see for example~\cite[p.~429]{MN04b}) shows that $M_{k,\omega}v=v$ contradicting the assumption that there are no eigenfunctions.

Hence $1\not\in\spec R_k(e^{i\omega})$, and 
so $\|(I-R_k(e^{i\omega}))^{-1}\|_\theta<\infty$, for each $\omega\in[0,2\pi]$.
By Corollary~\ref{cor-RkHolder}, $\omega\mapsto R_k(e^{i\omega})$ is continuous and
the result follows.
\end{proof}

\begin{rmk}  Under the assumption that there are no approximate eigenfunctions, we could bypass Proposition~\ref{prop-f1} and simply quote Lemma~\ref{lem-approx}.
\end{rmk}

The next step is to extend this estimate to an annulus.

\begin{prop} \label{prop-f2}
There exists $C\ge1$ such that
$\|R_k(e^{i\omega})-R_k(\rho e^{i\omega})\|_\theta\le C(1-\rho)^\eps$,
for all $\rho\in[0,1]$, $\omega\in[0,2\pi]$.
\end{prop}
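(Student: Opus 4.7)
The plan is to expand the difference as a power series in $\rho$ and estimate it term by term. Recall from the preceding discussion that $R_k(z)=\sum_{n\ge1} R_{k,n} z^n$ for $z\in\overline{\D}$, so
\[
R_k(e^{i\omega})-R_k(\rho e^{i\omega})=\sum_{n\ge1} R_{k,n}\,e^{in\omega}(1-\rho^n),
\]
and the triangle inequality yields $\|R_k(e^{i\omega})-R_k(\rho e^{i\omega})\|_\theta \le \sum_{n\ge1}\|R_{k,n}\|_\theta(1-\rho^n)$. The task is therefore reduced to showing that the scalar series on the right is $O((1-\rho)^\eps)$.

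For the scalar factor, the elementary bounds $1-\rho^n\le n(1-\rho)$ and $1-\rho^n\le 1$ combine via the interpolation $\min(1,x)\le x^\eps$ (valid for $x\ge0$, $\eps\in(0,1]$, since $x^\eps\ge x$ on $[0,1]$ and $x^\eps\ge 1$ on $[1,\infty)$) into the desired H\"older-type estimate
\[
1-\rho^n\le\min(1,n(1-\rho))\le n^\eps(1-\rho)^\eps.
\]
For the operator factor, Corollary~\ref{cor-Rk}(a) applied with an auxiliary exponent $\delta\in(0,\beta-\eps)$ such that $\varphi^\delta\in L^1(Z)$ gives $\|R_{k,n}\|_\theta\le C|k|^\delta n^\delta\mu_Z(\varphi=n)$. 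Such a $\delta$ is available: the standing hypothesis $\varphi^\eps\in L^1(Z)$ forces $\eps<\beta$, and condition~(iii) guarantees $\mu_Z(\varphi>n)=O(n^{-(\beta-\eta)})$ for every $\eta>0$, so any sufficiently small $\delta>0$ works.

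Assembling the two estimates,
\[
\|R_k(e^{i\omega})-R_k(\rho e^{i\omega})\|_\theta\le C|k|^\delta (1-\rho)^\eps\sum_{n\ge1}n^{\eps+\delta}\mu_Z(\varphi=n)=C|k|^\delta (1-\rho)^\eps\int_Z\varphi^{\eps+\delta}\,d\mu_Z,
\]
and the integral on the right converges because $\eps+\delta<\beta$. The only modestly nontrivial step is the interpolation bound on $1-\rho^n$; the integrability of $\varphi^{\eps+\delta}$ is pure bookkeeping around the choice of $\eps$ and $\delta$, and no ingredients beyond Section~\ref{sec-induced} are needed.
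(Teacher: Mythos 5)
Your argument is, in substance, the paper's own proof reorganized: the paper writes $R_k(e^{i\omega})-R_k(\rho e^{i\omega})=R(e^{ik\cdot H}e^{i\omega\varphi}(1-\rho^\varphi)\,\cdot\,)$ and estimates the partition sum directly via the usual $I_1+I_2+I_3$ decomposition, which is exactly your termwise sum $\sum_n\|R_{k,n}\|_\theta(1-\rho^n)$ grouped by $\varphi(a)=n$, and the key scalar bound $1-\rho^n\le\min\{1,n(1-\rho)\}\le n^\eps(1-\rho)^\eps$ is identical. Reusing Corollary~\ref{cor-Rk}(a) instead of redoing the pointwise estimates is a legitimate economy. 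One step, however, is not licensed as written: Corollary~\ref{cor-Rk}(a) with exponent $\delta$ requires $\theta\in[\lambda^{-\eta\delta},1)$, and since $\lambda^{-\eta\delta}>\lambda^{-\eta\eps}$ for $\delta<\eps$, the fixed $\theta\in[\lambda^{-\eta\eps},1)$ need not satisfy this (the constraint enters Lemma~\ref{lem-Rk} through the choice $\gamma=\theta^{1/\delta}\ge\lambda^{-\eta}$), so the bound $\|R_{k,n}\|_\theta\le C|k|^\delta n^\delta\mu_Z(\varphi=n)$ does not follow for the given $\theta$. This is easily repaired: either take $\theta$ sufficiently close to $1$ at the outset, or keep Corollary~\ref{cor-Rk}(a) with the standing exponent $\eps$ and instead put the small auxiliary exponent on the scalar factor, $1-\rho^n\le n^\delta(1-\rho)^\delta$, which yields $\|R_k(e^{i\omega})-R_k(\rho e^{i\omega})\|_\theta\ll|k|^\eps(1-\rho)^\delta\int_Z\varphi^{\eps+\delta}\,d\mu_Z$; the weaker H\"older exponent $\delta$ is all that Corollary~\ref{cor-f} requires. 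Your parenthetical claim that $\varphi^\eps\in L^1(Z)$ forces $\eps<\beta$ is not literally true (the tails could be lighter than $n^{-\beta}$), but condition~(iii) lets one take $\eps<\beta$ from the start, so the integrability of $\varphi^{\eps+\delta}$ is indeed only bookkeeping.
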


\begin{proof}
Define $S_{\omega,\rho}=R_k(e^{i\omega})-R_k(\rho e^{i\omega})$.
Let $v\in F_\theta(Z)$.  Then
\[
S_{\omega,\rho}v=
R(e^{ik\cdot H}e^{i\omega\varphi}(1-\rho^\varphi))v.
\]
Hence in the usual notation, for $z\in Z$,
\[
(S_{\omega,\rho}v)(z)=
\sum_{a\in\alpha}e^{g(z_a)}e^{ik\cdot H(z_a)}e^{i\omega\varphi(a)}(1-\rho^{\varphi(a)})v(z_a).
\]
By~\eqref{eq-GM},
\[
|S_{\omega,\rho}v|_\infty
\le C_3|v|_\infty\sum_{a\in\alpha} \mu_Z(a) (1-\rho^{\varphi(a)}).
\]
Now $1-\rho^n\le\min\{1,(1-\rho)n\}\le (1-\rho)^\eps n^\eps$.
Hence, 
\[
|S_{\omega,\rho}v|_\infty
\le C_3|v|_\infty\sum_{a\in\alpha} \mu_Z(a) (1-\rho)^\eps\varphi(a)^\eps
= C_3|\varphi^\eps|_1|v|_\infty (1-\rho)^\eps
\ll  |v|_\infty (1-\rho)^\eps.
\]

Next, for $z,z'\in Z$,
\[
|(S_{\omega,\rho}v)(z)- (S_{\omega,\rho}v)(z')|\le
I_1+I_2+I_3,
\]
where
\begin{align*}
I_1 & = 
	\sum_{a\in\alpha}(e^{g(z_a)}-e^{g(z_a')})e^{ik\cdot H(z_a)}e^{i\omega\varphi(a)}(1-\rho^{\varphi(a)})v(z_a), \\
I_2 & = 
	\sum_{a\in\alpha}e^{g(z_a')}(e^{ik\cdot H(z_a)}-e^{ik\cdot H(z_a')})e^{i\omega\varphi(a)}(1-\rho^{\varphi(a)})v(z_a), \\
I_3 & = 
\sum_{a\in\alpha}e^{g(z_a')}e^{ik\cdot H(z_a')}e^{i\omega\varphi(a)}(1-\rho^{\varphi(a)})(v(z_a)-v(z_a')). 
\end{align*}
Using estimates as in the proof of Lemma~\ref{lem-Rk} combined with the argument above for estimating $1-\rho^{\varphi(a)}$, we obtain
\begin{align*}
& |I_1|  \le C_3|\varphi^\eps|_1|v|_\infty  (1-\rho)^\eps\,d_\theta(z,z'), \\
& |I_2|  \le 2C_2C_3|k|^\eps|h|_{C^\eta}^\eps|\varphi^\eps|_1|v|_\infty(1-\rho)^\eps\,d_\theta(z,z'), \\
& |I_3|  \le C_3|\varphi^\eps|_1|v|_\theta(1-\rho)^\eps\,d_\theta(z,z').
\end{align*}
Hence 
$|S_{\omega,\rho}v|_\theta\ll \|v\|_\theta (1-\rho)^\eps$ and the result follows.
\end{proof}

\begin{cor} \label{cor-f}
There exists $\rho_0\in(0,1]$ such that
\[
\sup_{\rho\in[\rho_0,1]}\sup_{\omega\in[0,2\pi]}\|(I-R_k(\rho e^{i\omega}))^{-1}\|_\theta<\infty.
\]
\end{cor}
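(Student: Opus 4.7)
The plan is to deduce Corollary~\ref{cor-f} from Propositions~\ref{prop-f1} and~\ref{prop-f2} by a standard Neumann-series perturbation argument. Write $A(\omega) = I - R_k(e^{i\omega})$ and $B(\omega,\rho) = I - R_k(\rho e^{i\omega})$, so that
\[
B(\omega,\rho) = A(\omega) - S_{\omega,\rho}, \qquad S_{\omega,\rho} = R_k(\rho e^{i\omega}) - R_k(e^{i\omega}).
\]
By Proposition~\ref{prop-f1}, the quantity $M := \sup_{\omega\in[0,2\pi]}\|A(\omega)^{-1}\|_\theta$ is finite. By Proposition~\ref{prop-f2}, there is a constant $C\ge 1$ with $\|S_{\omega,\rho}\|_\theta \le C(1-\rho)^\eps$ for every $\omega\in[0,2\pi]$ and $\rho\in[0,1]$.

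Next, I would choose $\rho_0\in(0,1]$ so that $C(1-\rho_0)^\eps \le 1/(2M)$. Then for all $\rho\in[\rho_0,1]$ and $\omega\in[0,2\pi]$,
\[
\|A(\omega)^{-1} S_{\omega,\rho}\|_\theta \le M\cdot C(1-\rho)^\eps \le \tfrac12.
\]
Writing
\[
B(\omega,\rho) = A(\omega)\bigl(I - A(\omega)^{-1} S_{\omega,\rho}\bigr),
\]
the second factor is invertible by the Neumann series, with inverse of norm at most $\sum_{n\ge 0} 2^{-n} = 2$. Hence $B(\omega,\rho)$ is invertible and
\[
\|B(\omega,\rho)^{-1}\|_\theta = \bigl\|(I - A(\omega)^{-1} S_{\omega,\rho})^{-1} A(\omega)^{-1}\bigr\|_\theta \le 2M,
\]
uniformly in $\rho\in[\rho_0,1]$ and $\omega\in[0,2\pi]$, which is exactly the claim.

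There is no real obstacle here: the work has already been done in Propositions~\ref{prop-f1} and~\ref{prop-f2}. The only thing to keep in mind is that $R_k(\rho e^{i\omega})$ is genuinely defined as a bounded operator on $F_\theta(Z)$ for $\rho\in[0,1]$ (via the series $\sum_n R_{k,n}\rho^n e^{in\omega}$), which follows from the absolute convergence guaranteed by Corollary~\ref{cor-Rk}(a) together with the tail estimate on $\mu_Z(\varphi = n)$ from condition~(iii); this is what allows $S_{\omega,\rho}$ to be treated as a legitimate perturbation in the $\|\cdot\|_\theta$ norm.
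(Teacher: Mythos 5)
Your argument is correct and is essentially the paper's own proof: both factor $I-R_k(\rho e^{i\omega})$ as $(I-R_k(e^{i\omega}))$ times a small perturbation of the identity (you place the factor $(I-A(\omega)^{-1}S_{\omega,\rho})$ on the right, the paper puts $(I+A_{\omega,\rho})^{-1}$ with $A_{\omega,\rho}=-S_{\omega,\rho}A(\omega)^{-1}$ on the left), and both invert it by a Neumann series after choosing $\rho_0$ so the perturbation has norm at most $\tfrac12$, using Propositions~\ref{prop-f1} and~\ref{prop-f2} exactly as you do.
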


\begin{proof}
We use the resolvent identity
\begin{align} \label{eq-ri}
(I-R_k(\rho e^{i\omega}))^{-1}=(I-R_k(e^{i\omega}))^{-1}(I+A_{\omega,\rho})^{-1},
\end{align}
where
\[
A_{\omega,\rho}=(R_k(e^{i\omega})-R_k(\rho e^{i\omega})) (I-R_k(e^{i\omega}))^{-1},
\]
By Propositions~\ref{prop-f1} and~\ref{prop-f2},
$\|A_{\omega,\rho}\|_\theta\ll(1-\rho)^\eps$ for all
$\rho\in[0,1]$, $\omega\in[0,2\pi]$.
Hence we can choose $\rho_0$ so that 
$\|A_{\omega,\rho}\|_\theta\le \frac12$ for 
all $\rho\in[\rho_0,1]$, $\omega\in[0,2\pi]$.
It follows that $\|(I+A_{\omega,\rho})^{-1}\|_\theta\le 2$.
The result follows from~\eqref{eq-ri} and Proposition~\ref{prop-f1}.
\end{proof}

Next, we define $T_k(z)=\sum_{n=0}^\infty T_{k,n}z^n$.
Since $|T_{k,n}|_1\le1$ for all $n$, 
the family $T_k(z)$ is analytic on the open unit disk $\D$ when viewed as a family of operators on $L^1(Z)$.  Hence it is certainly analytic as a family of operators from $F_\theta(Z)$ to $L^1(Z)$.

The renewal equation becomes $T_k(z)=(I-R_k(z))^{-1}$ for $z\in\D$.
By Corollary~\ref{cor-f}, we can extend $T_k(z)$ to $\overline{\D}$ as a continuous family of operators from $F_\theta(Z)$ to $L^1(Z)$.

The Fourier coefficients of $T_k:S^1\to L(F_\theta(Z),L^1(Z))$ are given by
$\hat T_{k,n}=(2\pi)^{-1}\int_0^{2\pi}T_k(e^{i\omega})e^{-in\omega}\,d\omega$.
Also the coefficients of the analytic function $T_k:\D\to L(F_\theta(Z),L^1(Z))$ are given by 
$T_{k,n}=(2\pi)^{-1}\int_0^{2\pi}\rho^{-n}T_k(\rho e^{i\omega})e^{-in\omega}\,d\omega$
for any $\rho\in(0,1]$.
By Corollary~\ref{cor-f} and the renewal equation, the integrand  
$I_\rho(\omega)=\rho^{-n}T_k(\rho e^{i\omega})e^{-in\omega}$ satisfies the uniform bound
$\sup_{\rho\in[\rho_0,1]}\sup_{\omega\in[0,2\pi]}\|I_\rho(\omega)\|_{F_\theta(Z)\mapsto L^1(Z)}<\infty$.
Letting $\rho\to1^-$,  it follows from the dominated convergence theorem that $T_{k,n}=\hat T_{k,n}$ as required.

\paragraph{Acknowledgements}

The research of IM was supported in part by a European Advanced Grant {\em StochExtHomog} (ERC AdG 320977).
This research began at the University of Surrey where IM and DT were supported in part by EPSRC Grant EP/F031807/1.
We are grateful to the referees for very helpful comments.



\end{document}